\newlength{\defbaselineskip}
\newcommand{\setlinespacing}[1]%
           {\setlength{\baselineskip}{#1 \defbaselineskip}}
\theoremstyle{plain}
\newtheorem{thm}{Theorem}[section]
\newtheorem{defn}[thm]{Definition}
\newtheorem{cor}[thm]{Corollary}
\newtheorem{lem}[thm]{Lemma}
\newtheorem{exam}[thm]{Example}
\newtheorem{rem}[thm]{Remark}
\newtheorem*{arv}{Arveson-Douglas Conjecture}
\newtheorem*{geo}{Geometric Arveson-Douglas Conjecture}
\newcommand{\bn}{\mathbb{B}_n}
\newcommand{\cn}{\mathbb{C}^n}
\newcommand{\toe}{\mathcal{T}(L^{\infty})}
\newcommand{\ber}{L_a^{2}(\mathbb{B}_n)}
\newcommand{\pbn}{\partial\mathbb{B}_n}
\newcommand{\clb}{\overline{\mathbb{B}_n}}
\newcommand\blfootnote[1]{%
  \begingroup
  \renewcommand\thefootnote{}\footnote{#1}%
  \addtocounter{footnote}{-1}%
  \endgroup
}
\makeatletter\@addtoreset{equation}{section} \makeatother
\begin{document}
\title{Geometric Arveson-Douglas Conjecture-Decomposition of Varieties}
\author{Ronald G. Douglas, Yi Wang}
\maketitle
\blfootnote{
2010 Mathematics Subject Classification. 47A13, 32A50, 30H20, 32D15, 47B35

Key words and phrases. Geometric Arveson-Douglas Conjecture, Complex harmonic analysis, Bergman spaces

}
\begin{abstract}
In this paper, we prove the Geometric Arveson-Douglas Conjecture for a special case which allow some singularity on $\partial{\mathbb{B}_n}$. More precisely, we show that if a variety can be decomposed into two varieties, each having nice properties and intersecting nicely with $\partial\mathbb{B}_n$, then the Geometric Arveson-Douglas Conjecture holds on this variety. We obtain this result by applying a result by Su\'{a}rez, which allows us to ``localize'' the problem. Our result then follows from the simple case when the two varieties are intersection of linear subspaces with $\bn$.
\end{abstract}
\section{Introduction}
A classical way of obtaining submodules of the Bergman space $\ber$ is from a zero variety in $\bn$. The corresponding quotient module is then just the closure of linear span of those reproducing kernels in this variety. The Geometric Arveson-Douglas Conjecture asks whether the quotient module is $(p-)$ essential normal. In this paper, we develop a machinery to tell whether the sum of two quotient modules, each related to a zero variety in $\bn$, is closed. In other words, we prove that when two varieties satisfy certain hypotheses, the quotient module related to their union, which is itself a zero variety, is exactly the sum of the two quotient modules. Therefore properties of the union variety is determined by properties of the two original ones. This ``decomposing variety'' technique allows us to obtain complicated examples from simple ones (cf. \cite{Douglas Wang remark}\cite{Sha Ken}\cite{Shalit}). As a consequence, under certain hypotheses, the essential normality results on the union variety follow from those on each variety.

A complex Hilbert space $\mathcal{H}$ is called a Hilbert module (over the polynomial ring $\mathbb{C}[z_1,\cdots,z_n]$) if for every $p\in\mathbb{C}[z_1,\cdots,z_n]$ there is a bounded linear operator $M_p$ on $\mathcal{H}$ and the map $\mathbb{C}[z_1,\cdots,z_n]\to B(\mathcal{H}),~p\to M_p$ is an algebra homomorphism. A submodule $P\subset\mathcal{H}$ is a Hilbert subspace of $\mathcal{H}$ that is closed under the module multiplications $M_p$. Let $Q$ be the orthogonal complement of $P$ in $\mathcal{H}$. Then $Q$ is the quotient Hilbert module with the homomorphism taking $z_i$ to
the compression of $M_{z_i}$ to $Q$. A Hilbert module $\mathcal{H}$ is said to be essentially normal ($p$-essentially normal) if the commutators $[M_{z_i}, M_{z_j}^*]$ belong to the compact operators $\mathcal{K}(\mathcal{H})$ (Schatten $p$ class $\mathcal{S}^p$), for any $1\leq i, j\leq n$.

In the study of spaces of holomorphic functions on the open unit ball $\bn$, for example, the Bergman space, the Hardy space and the Drury-Arveson space, a well known property is that they are essentially normal, i.e., the multiplication operators of analytic polynomials are essentially normal.
In his paper \cite{Arv Dirac}, Arveson made a conjecture about essential normality of submodules and quotient modules of the Drury-Arveson space. The conjecture was then refined by the first author and extended to other spaces \cite{Dou index}. In this paper, we consider the Bergman space.
\begin{arv}
Assume $I$ is a homogeneous ideal of the polynomial ring $\mathbb{C}[z_1,\cdots,z_n]$
and $[I]$ is the closure of $I$ in $\ber$. Then for all $p>\dim Z(I)$, the quotient module $Q=[I]^{\perp}$ is $p$-essentially normal. Here
$$
Z(I)=\{z\in\bn:p(z)=0, \forall p\in I\}
$$
and $\dim Z(I)$ denotes the complex dimension of $Z(I)$.
\end{arv}
Results on the Arveson-Douglas Conjecture include \cite{Arv p summable}\cite{Douglas Sarkar}\cite{Douglas Wang}\cite{Guo}$\sim$\cite{Shalit} and many others.

For a polynomial ideal $I$, the zero variety $Z(I)$ determines another submodule
$$
P=\{f\in\ber: f|_{Z(I)}=0\}.
$$
In many cases, especially when $I$ is radical, one can prove that $[I]=P$. Specializing to this case, we have a geometric version of the conjecture \cite{Sha Ken}.
\begin{geo}
Let $M$ be a homogeneous variety in $\bn$. Let
$$
P=\{f\in\ber: f|_M=0\}
$$
and $Q=P^{\perp}=\overline{span}\{K_{\lambda}:\lambda\in M\}$. Then the quotient module $Q$ is $p$-essentially normal for every $p>\dim M$.
\end{geo}
Although the Geometric Arveson-Douglas Conjecture is about homogenous varieties, it was shown to hold in many non-homogenous cases. Assuming $M$ is smooth on $\pbn$, the conjecture was proved by Engli\v{s} and Eschmeier \cite{Englis}, the first author, Tang and Yu \cite{DYT} and us \cite{our paper} under additional assumptions, using very different techniques. In particular, in \cite{our paper}, we introduced tools in complex harmonic analysis to solve the problem. One approach is to decompose the variety into simpler ones. In \cite{Sha Ken}, Shalit and Kenndy discussed this problem and obtained positive results on unions of linear varieties or homogenous varieties intersecting only at the origin. In this paper, we continue to develop the machinery in \cite{our paper} and discuss a simple case which allow some singularity on $\pbn$. Suppose $M_1$ and $M_2$ are two varieties, each having nice properties, the variety $M=M_1\cup M_2$ might have singular points on $\pbn$. In this paper, we prove the Geometric Arveson-Douglas Conjecture on $M$ assuming the varieties $M_1$ and $M_2$ intersect ``nicely''. Our main result is the following.
\begin{thm}\label{main theorem}
Suppose $\tilde{M_1}$ and $\tilde{M_2}$ are two analytic subsets of an open neighborhood of $\clb$. Let $\tilde{M_3}=\tilde{M_1}\cap\tilde{M_2}$. Assume that
\begin{itemize}
\item[(i)]$\tilde{M_1}$ and $\tilde{M_2}$ intersect transversely with $\pbn$ and have no singular points on $\pbn$.
\item[(ii)]$\tilde{M_3}$ also intersects transversely with $\pbn$ and has no singular points on $\pbn$.
\item[(iii)]$\tilde{M_1}$ and $\tilde{M_2}$ intersect cleanly on $\partial\mathbb{B}_n$.
\end{itemize}
Let $M_i=\tilde{M_i}\cap\mathbb{B}_n$ and $Q_i=\overline{span}\{K_{\lambda}: \lambda\in M_i\}$, $i=1,2,3.$ Then $Q_1\cap Q_2/Q_3$ is finite dimensional and $Q_1+Q_2$ is closed. As a consequence, $Q_1+Q_2$ is $p$-essentially normal for $p>2d$, where $d=\max\{\dim M_1, \dim M_2\}$.
\end{thm}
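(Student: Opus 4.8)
The plan is to reduce both assertions to the single statement that one explicit operator is compact, and then to prove that compactness by localizing at boundary points via Su\'{a}rez's theorem, the local model being the case of two linear subspaces. For $i=1,2,3$ put $P_i=\{f\in\ber:f|_{M_i}=0\}$, so $Q_i=P_i^{\perp}$, and let $E_i$ be the orthogonal projection of $\ber$ onto $Q_i$. Since $M_3=M_1\cap M_2\subseteq M_1,M_2$ we have $P_1+P_2\subseteq P_3$, hence $Q_3\subseteq Q_1\cap Q_2$ and $E_1E_3=E_3E_1=E_3=E_2E_3=E_3E_2$; also $\overline{Q_1+Q_2}=\overline{span}\{K_\lambda:\lambda\in M_1\cup M_2\}$ is the quotient module of the union. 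I claim that if
$$K:=E_1E_2-E_3$$
is compact then both conclusions of Theorem~\ref{main theorem} follow. Indeed, the relations above give $E_3K=KE_3=0$, whence $(E_1E_2)^m=E_3+K^m$ for all $m\ge1$; by von Neumann's alternating-projection theorem $(E_1E_2)^m\to E_{Q_1\cap Q_2}$ strongly, so $K^m\to P:=E_{Q_1\cap Q_2}-E_3$ strongly, $P$ being the projection onto $(Q_1\cap Q_2)\ominus Q_3$. Using $E_jE_{Q_1\cap Q_2}=E_{Q_1\cap Q_2}$ for $j=1,2$ one checks $KP=PK=P$, so the compact operator $K$ restricts to the identity on $\mathrm{ran}\,P$; hence $\dim\bigl((Q_1\cap Q_2)\ominus Q_3\bigr)<\infty$, i.e.\ $Q_1\cap Q_2/Q_3$ is finite dimensional. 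Then $E_1E_2-E_{Q_1\cap Q_2}=K-P$ is compact and vanishes on $Q_1\cap Q_2$, so $T:=E_1E_2|_{(Q_1\cap Q_2)^{\perp}}$ is compact; since $\|E_1E_2\xi\|<\|\xi\|$ for every nonzero $\xi\perp Q_1\cap Q_2$ (equality would force $\xi\in Q_1\cap Q_2$), compactness makes $\|T\|$ attained and hence $\|T\|<1$, i.e.\ the Friedrichs angle between $Q_1$ and $Q_2$ is positive, so $Q_1+Q_2$ is closed and equals the quotient module of $M_1\cup M_2$.

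It remains to show $K=E_1E_2-E_3$ is compact. By the analysis in \cite{our paper}, each $E_i$ --- being the quotient-module projection of a variety smooth on $\pbn$ and transverse to $\pbn$ --- belongs to the Toeplitz algebra $\toe$, so $K\in\toe$; by Su\'{a}rez's localization theorem it then suffices to show that the Berezin transform $\widetilde K(z)=\langle Kk_z,k_z\rangle$ tends to $0$ as $z\to p$, for each $p\in\pbn$ separately. If $p\notin\tilde M_3=\tilde M_1\cap\tilde M_2$, then $p$ lies at positive distance from, say, $\tilde M_1$, hence from $\tilde M_3\subseteq\tilde M_1$; the methods of \cite{our paper} give $\|E_1k_z\|\to0$ as $z\to p$, and since $\|E_3k_z\|=\|E_3E_1k_z\|\le\|E_1k_z\|$ we get $\widetilde K(z)=\langle E_2k_z,E_1k_z\rangle-\|E_3k_z\|^2\to0$; the case $p\notin\tilde M_2$ is symmetric. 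So only the points $p\in\tilde M_3\cap\pbn$ remain, and here hypotheses (i)--(iii) enter.

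At such a $p$, transversality to $\pbn$ and smoothness let us straighten $\tilde M_1$ near $p$, and clean intersection of $\tilde M_1,\tilde M_2$ together with (ii) lets us do so simultaneously for $\tilde M_2$ and $\tilde M_3=\tilde M_1\cap\tilde M_2$: after a local biholomorphic change of coordinates, compatible with the boundary geometry in the sense required by the localization procedure of \cite{our paper}, the germs of $\tilde M_1,\tilde M_2,\tilde M_3$ at $p$ become germs of linear subspaces $V_1,V_2$ and $V_1\cap V_2$, and after a further linear change one may take $V_1,V_2$ to be coordinate subspaces of $\cn$. This is ``the simple case'': when $M_i=V_i\cap\bn$ for coordinate subspaces $V_1,V_2$, the monomials form an orthogonal basis of $\ber$ in which each $E_i$ is a diagonal projection, the support of $E_1E_2$ is the intersection of the supports of $E_1$ and $E_2$, and this is exactly the support of $E_3$; hence $E_1E_2=E_3$ on the nose and $\widetilde K\equiv0$ in the model. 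Transplanting this through the straightening shows $\widetilde K(z)\to0$ as $z\to p$, and Su\'{a}rez's theorem then yields compactness of $K$. I expect the main obstacle to lie precisely here: importing from \cite{our paper} the fact that the $E_i$ sit inside $\toe$ with controlled boundary behaviour, and arranging the straightening at $p\in\tilde M_3\cap\pbn$ so that it is genuinely compatible with the Bergman-kernel asymptotics governing $\widetilde K(z)$ as $z\to p$; clean intersection is exactly the hypothesis that makes the simultaneous linearization possible, and without it the model identity $E_1E_2=E_3$ would fail.

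Finally, $Q_1+Q_2$ is now the closed quotient module of the variety $M_1\cup M_2$, and $Q_1\cap Q_2$ differs from $Q_3$ by a finite-dimensional space. Each $Q_i$ is $p$-essentially normal for $p>2\dim M_i$ by \cite{our paper}, and $\dim M_3\le\min\{\dim M_1,\dim M_2\}$, so $Q_1\cap Q_2$ is $p$-essentially normal for $p>2d$. Applying the short exact sequence $0\to Q_1\cap Q_2\to Q_1\oplus Q_2\to Q_1+Q_2\to0$ of Hilbert modules (exact because $Q_1+Q_2$ is closed) together with the standard argument that $p$-essential normality passes along such sequences (cf.\ \cite{Sha Ken}), we conclude that $Q_1+Q_2$ is $p$-essentially normal for $p>\max\{2\dim M_1,2\dim M_2\}=2d$.
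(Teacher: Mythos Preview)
Your reduction in the first paragraph is clean, but the central claim that $K=E_1E_2-E_3$ is \emph{compact} is false in general, and this is where the argument breaks. Take $n=3$, $\tilde M_1=\{z_3=0\}$, $\tilde M_2=\{z_3=z_2\}$, $\tilde M_3=\{z_2=z_3=0\}$; all three hypotheses of the theorem are satisfied. For a linear subspace $V$ the projection onto $Q_V$ is $(E_Vf)(z)=f(\Pi_Vz)$ with $\Pi_V$ orthogonal projection onto $V$, so $E_1E_2=E_3$ would force $\Pi_2\Pi_1=\Pi_3$, which fails here (compute: $\Pi_2\Pi_1(z)=(z_1,z_2/2,z_2/2)\neq(z_1,0,0)=\Pi_3(z)$). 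Localizing at any $x\in\overline{M_3}^{_A}\setminus\bn$ with $\hat x=(1,0,0)$, the paper's Lemma~\ref{Mx} gives $M_{ix}=\tilde M_i$, so $(E_1E_2-E_3)_x=E_1E_2-E_3\neq0$ and hence $K$ is not compact. The flaw is in your ``further linear change'' step: yes, two linear subspaces can always be made coordinate subspaces by \emph{some} linear isomorphism, but that isomorphism is not unitary unless the subspaces are already orthogonal, and a non-unitary linear change does not intertwine the Bergman projections $E_i$. The model identity $E_1E_2=E_3$ holds only for orthogonal coordinate subspaces; for linear subspaces at an acute angle one only gets $\|E_2E_1E_2-E_3\|<1$ (this is exactly Example~\ref{example hyperplane}), and that bound is sharp in the sense of Remark~\ref{inverse of exam 1}.

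The paper therefore proves the weaker (and correct) statement $\|Q_2Q_1Q_2-Q_3\|_e<1$, which by Lemma~\ref{3equivalent} is already equivalent to ``$Q_1+Q_2$ closed and $(Q_1\cap Q_2)/Q_3$ finite dimensional''. The localization is via Su\'arez's formula $\|S\|_e=\sup_{x\in M_A\setminus\bn}\|S_x\|$ (not the Berezin transform), and at each $x$ the operators $Q_{ix}$ are, by Lemma~\ref{Mx}, projections onto quotient modules of genuine linear subspaces $M_{ix}$ of $\cn$; Example~\ref{example hyperplane} then gives $\|Q_{2x}Q_{1x}Q_{2x}-Q_{3x}\|\le a<1$ with $a$ depending only on the angle between $M_{1x}$ and $M_{2x}$. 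Clean intersection guarantees $M_{1x}\cap M_{2x}=M_{3x}$ and, via compactness of $\tilde M_3\cap\pbn$, a uniform lower bound on these angles. The one genuinely delicate point---your case ``$p\notin\tilde M_3$'' is not automatic: one must rule out that $\rho(x,\overline{M_1}^{_A})<1$ and $\rho(x,\overline{M_2}^{_A})<1$ while $\rho(x,\overline{M_3}^{_A})=1$ (cf.\ Example~\ref{counter example})---is handled separately in Lemma~\ref{condition 3}. Your alternating-projection reduction would work if you replaced ``$K$ compact'' by ``$\|K^*K\|_e<1$'', but then you still need the angle analysis of Example~\ref{example hyperplane} rather than the coordinate-subspace trick.
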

A key ingredient is that when the varieties $M_1$ and $M_2$ satisfy condition (i), the projections $Q_1$ and $Q_2$ are in the Toeplitz algebra $\toe$ (cf. \cite{our paper}).  This allows us to apply a result by Su\'{a}rez (cf. \cite{Suarez07}) about essential norms of Toeplitz operators. Then we show that the angle between the two spaces $Q_1$ and $Q_2$ depend essentially on the angles between $M_1$ and $M_2$ at points in $\pbn$. Therefore under the hypotheses of Theorem \ref{main theorem}, $Q_1$ and $Q_2$ have positive angle, which implies that their sum is closed. As a consequence, an equality in index classes is stated in the Summary.

Our method offers a framework of proving closeness of sum of spaces. This could be useful not only in this paper, but also in the future.

We would like to thank Xiang Tang for discussing with us, reading the draft of this paper and giving valuable suggestions and advises.

\section{Preliminary}
To simplify notation, throughout this paper, we will use the same letter to denote both the space and the projection operator onto it. For example, the letter $Q$ is used to denote both the quotient module and the projection operator onto $Q$.

For $n\in\mathbb{N}$, let $\bn$ be the open unit ball in $\cn$. Let $\ber$ be the Bergman space on $\bn$.
$$
\ber=\{f\mbox{ holomorphic on }\bn: \int_{\bn}|f|^2dv<\infty\}.
$$
Here $v$ is the normalized Lebesgue measure on $\bn$, $v(\bn)=1$. The Bergman space $\ber$ has reproducing kernels
$$
K_z(w)=\frac{1}{(1-\langle w,z\rangle)^{n+1}}, ~~~~~~z,w\in\bn.
$$
It becomes a Hilbert module with module map given by pointwise multiplication.
For $g\in L^{\infty}(\bn)$, the Toeplitz operator is defined by
$$
T_g:\ber\to\ber, f\mapsto P_{\ber}(gf)
$$
where $P_{\ber}$ is the projection operator from $L^2(\bn)$ to $\ber$. The Toeplitz algebra $\toe$ is the $C^*$ subalgebra of $B(\ber)$ generated by $T_g$, $g\in L^{\infty}(\bn)$.
\begin{defn}
Let $\Omega$ be a complex manifold. A set $A\subset\Omega$ is called a \emph{(complex) analytic subset} of $\Omega$ if for each point $a\in\Omega$ there exist a neighborhood $U\ni a$ and functions $f_1,\cdots,f_N$ holomorphic on this neighborhood such that
$$
A\cap U=\{z\in U: f_1(z)=\cdots=f_N(z)=0\}.
$$
A point $a\in A$ is called \emph{regular} if there is a neighborhood $U\ni a$ in $\Omega$ such that $A\cap U$ is a complex submanifold of $\Omega$. A point $a\in A$ is called a \emph{singular point} of $A$ if it is not regular.
\end{defn}
\begin{defn}
Let $Y$ be a manifold and let $X, Z$ be two submanifolds of $Y$. We say that the submanifolds $X$ and $Z$ are \emph{transversal} if $\forall x\in X\cap Z$, $T_x(X)+T_x(Z)=T_x(Y)$.
\end{defn}

The following two theorems are the main result of our paper \cite{our paper}. We will use them in the proof.
\begin{thm}
If there exists a positive, finite, regular, Borel measure~$\mu$ on $M$ such that the $L^2(\mu)$ norm and Bergman norm are equivalent on $Q$, i.e., $\exists C, c>0$ such that $\forall f\in Q$,
$$c\|f\|^2\leq\int_M|f(w)|^2d\mu(w)\leq C\|f\|^2,$$
then the projection operator $Q$ belongs to the Toeplitz algebra $\toe$ and consequently, the quotient module~$Q$~is essentially normal.
\end{thm}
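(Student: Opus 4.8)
The plan is to localize the problem using Su\'{a}rez's theorem and reduce everything to an explicit model computation for linear subspaces. First I would record the starting point: by the first theorem of \cite{our paper} quoted above, conditions (i) and (ii) guarantee that $Q_1$, $Q_2$ and $Q_3$ all lie in the Toeplitz algebra $\toe$, since for a variety meeting $\pbn$ transversely without boundary singularities the pullback of arc-length/volume measure on $M_i$ gives a measure whose $L^2$ norm is equivalent to the Bergman norm on $Q_i$. Thus $Q_1Q_2-Q_3$, or rather the operator whose compactness controls the angle, is an element of $\toe$, and its essential norm can be detected via the Berezin transform at the boundary. Concretely, the obstruction to $Q_1+Q_2$ being closed is exactly that the ``interaction operator'' $Q_1Q_2$ (equivalently $Q_1-Q_1Q_2$, the compression measuring the gap) fails to be close to $Q_3$; so the real content is to show $Q_1Q_2-Q_3$ is compact, which by Su\'{a}rez means showing its Berezin transform vanishes at $\pbn$.

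Next I would carry out the localization. Su\'{a}rez's result (cf. \cite{Suarez07}) expresses the essential norm of a Toeplitz-algebra operator $T$ as $\limsup_{z\to\pbn}\|U_z^* T U_z\|$ (or in terms of the limit operators obtained from the Berezin-type translations $k_z\mapsto$ composition with the M\"obius involution $\varphi_z$), where $U_z$ are the unitary weighted composition operators implementing automorphisms of $\bn$. So I would fix a boundary point $\zeta\in\pbn$ and a sequence $z\to\zeta$ nontangentially, and compute the limit of $U_z^*(Q_1Q_2-Q_3)U_z$. The key geometric input: as $z\to\zeta$, the translated varieties $\varphi_z(M_i)$ converge, locally near the new center, to the intersection with $\bn$ of the complex tangent space to $\tilde M_i$ at $\zeta$ (if $\zeta\in\partial M_i$) or to all of $\bn$ (if $\zeta\notin\overline{M_i}$, in which case $U_z^*Q_iU_z\to 0$). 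Using that $Q_i\in\toe$ and the known behaviour of projections onto $Q_i$ under the automorphism group, one sees that $U_z^*Q_iU_z$ converges strongly to the projection onto $\overline{span}\{K_\lambda:\lambda\in L_i\cap\bn\}$, where $L_i=T_\zeta\tilde M_i$ is a linear (affine-to-linear after the automorphism) subspace. The clean-intersection hypothesis (iii) plus transversality of $\tilde M_3$ ensures $T_\zeta\tilde M_3=T_\zeta\tilde M_1\cap T_\zeta\tilde M_2$ at every $\zeta\in\pbn$, so the limit of $U_z^*(Q_1Q_2-Q_3)U_z$ is exactly $P_{L_1}P_{L_2}-P_{L_1\cap L_2}$ in the model Bergman space.

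Then I would prove the model case: for two linear subspaces $V_1,V_2\subset\cn$ through $0$ with $V_3=V_1\cap V_2$, the product $Q_{V_1}Q_{V_2}$ of the Bergman-space projections onto $\overline{span}\{K_\lambda:\lambda\in V_i\cap\bn\}$ equals $Q_{V_3}$ exactly. This is a concrete linear-algebra/harmonic-analysis fact: after a unitary change of coordinates we may take $V_1=\cn^{a}\times\{0\}$ type coordinate subspaces, the corresponding $Q_{V_i}$ become explicit ``slice'' projections (integration/restriction in the complementary variables), and their product manifestly restricts to the common coordinate subspace, giving $Q_{V_3}$. Hence all limit operators of $Q_1Q_2-Q_3$ vanish, so by Su\'{a}rez $Q_1Q_2-Q_3\in\K$. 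Finally I would deduce the stated conclusions: compactness of $Q_1Q_2-Q_3$ gives $\|Q_1Q_2-Q_3\|<1$ modulo the finite-rank part, hence the angle between $(Q_1\ominus Q_3)$ and $(Q_2\ominus Q_3)$ is positive away from a finite-dimensional subspace; a standard lemma on sums of closed subspaces with positive angle then yields that $Q_1+Q_2$ is closed and that $(Q_1\cap Q_2)/Q_3$ is finite dimensional. Essential normality of $Q_1+Q_2$ for $p>2d$ follows because $Q_1+Q_2$ differs by a compact (indeed finite-rank-modulo-$Q_3$) perturbation from $Q_1\oplus Q_2\ominus Q_3$-type combinations, each summand being $p$-essentially normal for $p>\dim M_i$ by the main theorem of \cite{our paper}, and $p>2d$ absorbs the cross terms.

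\textbf{Main obstacle.} I expect the hard part to be the convergence analysis in the localization step: showing that $U_z^* Q_i U_z$ really converges (strongly, and with enough uniformity to pass to the product $Q_1Q_2$) to the model projection $P_{L_i}$, and in particular controlling the case where $\zeta$ lies in $\partial M_1\cap\partial M_2$ so that both factors degenerate simultaneously. This requires the clean-intersection hypothesis to be used quantitatively — it is precisely what forbids the two tangent subspaces from ``spiraling'' into each other and keeps $T_\zeta\tilde M_1+T_\zeta\tilde M_2$ of constant dimension along $\partial M_3$ — and translating that geometric statement into the operator-norm convergence of the Berezin translates is the crux of the argument.
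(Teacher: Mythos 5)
Your proposal does not address the stated theorem. The statement you were asked to prove is the preliminary result (quoted from \cite{our paper}) about a \emph{single} quotient module $Q$ and a \emph{single} measure $\mu$: namely, that if the $L^2(\mu)$ and Bergman norms are equivalent on $Q$, then the projection $Q$ lies in $\toe$ and $Q$ is essentially normal. Your write-up instead takes that result as an input (``by the first theorem of \cite{our paper} quoted above, \dots $Q_1, Q_2, Q_3$ all lie in $\toe$'') and then proceeds to prove the paper's main decomposition theorem (Theorem \ref{mainthm}) about two varieties $M_1, M_2$, clean intersection, closedness of $Q_1+Q_2$, etc. That is a different theorem.

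For the record, the intended argument for the statement at hand is short and measure-theoretic, not a localization argument. From the upper bound $\int_M|f|^2\,d\mu\le C\|f\|^2$ one sees $\mu$ is a Carleson measure, so the Toeplitz operator $T_\mu$, defined by $\langle T_\mu g,h\rangle=\int_M g\bar h\,d\mu$, is a bounded, positive element of $\toe$. If $g\in P=Q^\perp$ then $g|_M=0$ and hence $T_\mu g=0$; dually $\operatorname{Ran} T_\mu\subseteq Q$. Thus $T_\mu=Q T_\mu Q$ and, by the two-sided bound, $c\,Q\le QT_\mu Q\le C\,Q$; consequently $\sigma(T_\mu)\subseteq\{0\}\cup[c,C]$ with $0$ isolated. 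Choosing $f\in C(\mathbb{R})$ with $f(0)=0$ and $f\equiv 1$ on $[c,C]$ gives $Q=f(T_\mu)\in\toe$. Essential normality of the quotient module then follows because $\toe/\mathcal{K}$ is commutative, so $[Q,T_{z_i}]$ is compact, and hence the compressed coordinate multipliers on $Q$ have compact self-commutators. None of the machinery in your proposal (limit operators $U_z^*TU_z$, tangent-plane models $P_{L_i}$, clean intersection) is needed or relevant here; that machinery belongs to the proof of Theorem \ref{mainthm}.
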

In the sequel, we will say $\mu$ is an ``equivalent measure'' on $Q$ if it satisfies the above hypotheses.
\begin{thm}\label{one variety}
Suppose~$\tilde{M}$~is a complex analytic subset of an open neighborhood of~$\clb$~satisfying the following conditions:
\begin{itemize}
\item[(1)] $\tilde{M}$~intersects~$\pbn$~transversely.
\item[(2)] $\tilde{M}$~has no singular points on~$\pbn$.
\end{itemize}
Let~$M=\tilde{M}\cap\bn$ and let~$P=\{f\in\ber: f|_M=0\}$. Then $Q$ has an ``equivalent measure''. As a consequence, the projection operator $Q\in\toe$ and the quotient module~$Q$~is $p$-essentially normal for any $p>2\dim M$.
\end{thm}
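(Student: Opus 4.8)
Write $k=\dim M$. The plan is to exhibit an explicit ``equivalent measure'' on $Q$, so that the equivalent-measure criterion stated above applies and yields $Q\in\toe$ together with essential normality; the sharp exponent $p>2k$ will then follow from a localization of the compressed commutators. The candidate is the weighted surface measure
$$d\mu=(1-|z|^2)^{\,n-k}\,d\mathcal H_{2k}\big|_{M},$$
where $\mathcal H_{2k}$ denotes $2k$-dimensional Hausdorff measure and the weight exponent $n-k\ge 0$ is dictated by the flat model $\tilde M=\{z_{k+1}=\cdots=z_n=0\}$, in which $Q$ is unitarily equivalent to a weighted Bergman space on $\mathbb B_k$ and $\mu$ is exactly a constant multiple of its defining measure. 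First I would verify that $\mu$ is finite: by (1)--(2), near each $p\in\tilde M\cap\pbn$ the set $M$ is a smooth $2k$-real-dimensional manifold with boundary meeting $\pbn$ transversely, so in a boundary collar $d\mathcal H_{2k}\big|_M\asymp dt\,d\sigma$ with $t\asymp 1-|z|^2$ the transverse depth and $d\sigma$ a boundary measure, whence the factor $t^{\,n-k}$ makes the integral converge; away from $\pbn$, $M$ is compactly contained in $\bn$. It then remains to prove the two-sided estimate $c\|f\|^2\le\int_M|f|^2\,d\mu\le C\|f\|^2$ for $f\in Q$.

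For the upper (Carleson) bound $\int_M|f|^2\,d\mu\le C\|f\|^2$, valid for every $f\in\ber$, I would invoke the ball characterization of Carleson measures and reduce to $\mu\big(D(z,r)\big)\lesssim v\big(D(z,r)\big)\asymp(1-|z|^2)^{n+1}$ for Kor\'anyi balls $D(z,r)$ of a fixed radius. This is purely geometric. Since $\tilde M$ is a \emph{complex} submanifold meeting $\pbn$ transversely, $T_p\tilde M$ cannot lie inside the complex tangent space $T_p^{\mathbb C}\pbn$; in fact $T_p\tilde M\cap T_p^{\mathbb C}\pbn$ has complex dimension exactly $k-1$. Hence near the boundary $M$ occupies one ``complex-normal'' direction, along which $D(z,r)$ has extent $\asymp(1-|z|^2)$, together with $k-1$ complex-tangential directions, along which $D(z,r)$ has extent $\asymp(1-|z|^2)^{1/2}$, so $\mathcal H_{2k}\big(M\cap D(z,r)\big)\lesssim(1-|z|^2)^{k+1}$; multiplying by the weight $(1-|z|^2)^{n-k}$ gives $(1-|z|^2)^{n+1}$, as required. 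Making ``near the boundary $M$ looks like this'' precise requires a local holomorphic change of coordinates straightening $\tilde M$ to a coordinate plane, with transversality guaranteeing that the Kor\'anyi balls are distorted only by bounded factors; the part of $M$ compactly contained in $\bn$ is harmless, contributing a finite measure supported inside $\bn$.

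The harder half is the lower bound \emph{on $Q$}: $\|f\|^2\le C\int_M|f|^2\,d\mu$ for $f\in Q$ --- a reverse-Carleson (``sampling'') estimate. Equivalently, the positive operator $T_\mu$ given by $T_\mu f=\int_M f(\lambda)\,K_\lambda\,d\mu(\lambda)$, so that $\langle T_\mu f,f\rangle=\int_M|f|^2\,d\mu$, is bounded below on $Q$. Two structural facts are immediate: $T_\mu$ has range in $\overline{\mathrm{span}}\{K_\lambda:\lambda\in M\}=Q$, and $\ker T_\mu=P$ exactly (a function in $\ber$ vanishing $\mu$-a.e. on $M$ vanishes identically on $M$, being real-analytic on the dense regular part), so $T_\mu|_Q$ is already injective and the only issue is that $0$ must be kept out of its essential spectrum. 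My plan is to show that $T_\mu-\tilde c\,Q$ is compact, where $\tilde c$ is the constant for which the equality $T_\mu=\tilde c\,Q$ holds in the flat model. For this I would fix a finite cover of $\tilde M\cap\pbn$ by charts straightening $\tilde M$, with a subordinate partition of unity $\{\phi_j\}$, write $T_\mu=\sum_j T_{\phi_j\mu}+T_{\mu_0}$ with $\mu_0$ supported compactly in $\bn$ (hence $T_{\mu_0}$ compact), and compare each $T_{\phi_j\mu}$, through the straightening, with its flat-model counterpart --- where one also uses that, near a boundary point, the Bergman structure of the straightened strictly pseudoconvex domain is comparable to that of $\bn$. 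Transversality is what keeps these comparisons two-sided with bounded constants, and the resulting errors together with all cross-terms (between distinct $\phi_j$, and between $\phi_j$ and the interior piece) are compact, by off-diagonal decay of the Bergman kernel and the separation of $M$ away from each fixed $p_j$. Summing gives $T_\mu=\tilde c\,Q+(\text{compact})$; restricted to $Q$ this is $\tilde c\,\mathrm{Id}_Q$ plus a compact operator which, being injective, is invertible --- the lower bound. \emph{This compactness-by-localization step is the main obstacle}: the local model projections attached to overlapping pieces of $M$ do not literally add up to $Q$, so the cross-term bookkeeping and the uniform control of all the local coordinate changes must be handled carefully; this is precisely where one leans on Su\'arez-type essential-norm results for operators in the Toeplitz algebra \cite{Suarez07}.

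With $\mu$ shown to be an equivalent measure, the equivalent-measure criterion yields $Q\in\toe$ and essential normality of $Q$. For the sharp exponent $p>2k$ I would revisit the compressed commutators $[M_{z_i},M_{z_j}^*]\big|_Q$: since $Q\in\toe$ commutes modulo compacts with Toeplitz operators, the same localization reduces these, up to $\mathcal S^p$ errors, to the commutators on a weighted Bergman space over the $k$-dimensional ball $\mathbb B_k$, which lie in $\mathcal S^p$ precisely for $p>2k$; summing the finitely many local contributions and the compact remainder gives $[M_{z_i},M_{z_j}^*]\big|_Q\in\mathcal S^p$ for every $p>2k$, which is the assertion.
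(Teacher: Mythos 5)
Note first that the paper you are working from does not actually prove Theorem~\ref{one variety}: it states it as one of ``the main result[s] of our paper \cite{our paper}'' and imports it. So there is no proof here to compare against literally, but the paper leaves clear fingerprints of the cited argument. Your candidate measure $(1-|z|^2)^{n-k}\,d\mathcal H_{2k}\vert_M$ is exactly the one used there (see Example~3.3 of \cite{our paper}, quoted inside Example~\ref{example hyperplane}), and your Carleson upper-bound count ($(1-|z|^2)^2$ from the two real normal directions of $D(z,r)$, $(1-|z|^2)^{k-1}$ from the $k-1$ complex-tangential directions, times the weight, giving $(1-|z|^2)^{n+1}$) is correct and matches the geometry of Lemma~\ref{description of D(z,R)}. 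The divergence is in the reverse-Carleson half. The title of \cite{our paper} and the phrase in Lemma~\ref{Mx}'s proof --- ``there exists an equivalent measure $\mu$ on $M$ such that $0$ is isolated in $\sigma(T_\mu)$ and $Q=\operatorname{Ran}T_\mu$'' --- point to a \emph{holomorphic extension} argument: one builds a bounded extension $E$ from $L^2(M,\mu)\cap\mathrm{Hol}(M)$ into $\ber$, and then for $f\in Q$, $E(f\vert_M)-f\in P$ forces $\|f\|=\|QE(f\vert_M)\|\le\|E\|\,\|f\vert_M\|_{L^2(\mu)}$, which is the lower bound with no compactness bookkeeping at all.

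Your alternative route --- show $T_\mu-\tilde c\,Q$ is compact by localizing and comparing with flat models --- has a genuine gap, and you already put your finger on it. To control the cross-terms and the sum of the local pieces by Su\'arez's essential-norm formula (Lemma~\ref{essential norm}) you would need to apply that formula to $T_\mu-\tilde c\,Q$, but this requires $Q\in\toe$, which is what you are trying to prove; the argument is circular unless $Q$ is eliminated from the estimate. Moreover, straightening $\tilde M$ to a coordinate plane is a local biholomorphism, not an automorphism of $\bn$, so it changes the Bergman kernel by a nontrivial Jacobian factor; ``comparable Bergman structure'' does not by itself give the needed operator identity up to compacts. Finally, the last paragraph overreaches: Su\'arez-type localization controls essential norms, i.e., compactness, not Schatten norms, so the claim of ``$\mathcal S^p$ errors'' from the same localization is unjustified. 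Note also that the equivalent-measure criterion as stated in the paper yields only essential normality; the sharp exponent $p>2\dim M$ in Theorem~\ref{one variety} is an extra conclusion of \cite{our paper} and requires a genuinely quantitative (Schatten) estimate on $[T_{z_i},Q]$ in terms of $\mu$, not just a localization-to-flat-model reduction.
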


Next, we introduce some tools in complex harmonic analysis that will be used in this paper (see \cite{Zhu Kehe}\cite{Suarez04}\cite{Suarez07} for more details).

\begin{defn}
For~$z\in\bn$, write~$P_z$~for the orthogonal projection onto the complex line~$\mathbb{C}z$ and~$Q_z=I-P_z$. The map
$$
\varphi_z(w)=\frac{z-P_z(w)-(1-|z|^2)^{1/2}Q_z(w)}{1-\langle w,z\rangle}
$$
defines an automorphism of $\mathbb{B}_n$.
\end{defn}
The map $\varphi_z$ has many nice properties, for example, it maps affine spaces to affine spaces. Also, $\varphi_z\circ\varphi_z=id$~and~$\varphi_z(0)=z$.

\begin{lem}\label{basic about varphi}
Suppose~$a$, $z$, $w\in\bn$, then
\begin{itemize}
\item[(1)] $$1-\langle\varphi_a(z),\varphi_a(w)\rangle=\frac{(1-\langle a,a\rangle)(1-\langle z,w\rangle)}{(1-\langle z,a\rangle)(1-\langle a,w\rangle)}.$$
\item[(2)] As a consequence of (1),
$$1-|\varphi_a(z)|^2=\frac{(1-|a|^2)(1-|z|^2)}{|1-\langle z,a\rangle|^2}.$$
\item[(3)] The Jacobian of the automorphism~$\varphi_z$~is
$$(J\varphi_z(w))=\frac{(1-|z|^2)^{n+1}}{|1-\langle w,z\rangle|^{2(n+1)}}.$$
\end{itemize}
\end{lem}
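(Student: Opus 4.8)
The plan is to verify the three identities in order, using the definition of $\varphi_a$ directly for (1), and then deriving (2) and (3) as consequences. The key computational fact to establish first is a formula for the inner product $\langle\varphi_a(z),\varphi_a(w)\rangle$ from the explicit expression
$$
\varphi_a(z)=\frac{a-P_a(z)-(1-|a|^2)^{1/2}Q_a(z)}{1-\langle z,a\rangle}.
$$
First I would note that $P_a$ is self-adjoint and idempotent, $Q_a=I-P_a$ is its complementary projection, $P_aQ_a=0$, and $P_a(a)=a$. Writing $z=P_a(z)+Q_a(z)$ and similarly for $w$, the numerators of $\varphi_a(z)$ and $\varphi_a(w)$ expand into terms lying in $\mathbb{C}a$ and terms in the orthogonal complement; the cross terms vanish by $P_aQ_a=0$. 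Computing $\langle a-P_a(z)-(1-|a|^2)^{1/2}Q_a(z),\;a-P_a(w)-(1-|a|^2)^{1/2}Q_a(w)\rangle$ gives $\langle a,a\rangle-\langle a,P_a(w)\rangle-\langle P_a(z),a\rangle+\langle P_a(z),P_a(w)\rangle+(1-|a|^2)\langle Q_a(z),Q_a(w)\rangle$. Using $\langle P_a(z),P_a(w)\rangle=\langle z,P_a(w)\rangle=\langle P_a(z),w\rangle$ and $\langle Q_a(z),Q_a(w)\rangle=\langle z,w\rangle-\langle P_a(z),P_a(w)\rangle$, together with $\langle a,P_a(w)\rangle=\langle a,w\rangle$ and $\langle P_a(z),a\rangle=\langle z,a\rangle$, this collapses after routine algebra to $(1-|a|^2)(1-\langle z,w\rangle)+|a|^2\langle z,w\rangle-\langle z,a\rangle-\langle a,w\rangle+$ lower-order terms; dividing by $(1-\langle z,a\rangle)(1-\overline{\langle w,a\rangle})=(1-\langle z,a\rangle)(1-\langle a,w\rangle)$ and subtracting from $1$, everything telescopes to the claimed right-hand side. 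The main obstacle here is purely bookkeeping: keeping track of which inner products are conjugate-linear in which slot, since $1-\langle w,z\rangle$ in the denominator of $\varphi_a$ must be matched against $\overline{1-\langle w,a\rangle}=1-\langle a,w\rangle$ when taking the inner product of the two numerators.

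Once (1) is in hand, statement (2) is immediate: set $w=z$ in (1), so that $\langle z,w\rangle=|z|^2$, $\langle z,a\rangle$ and $\langle a,w\rangle=\langle a,z\rangle=\overline{\langle z,a\rangle}$, giving
$$
1-|\varphi_a(z)|^2=1-\langle\varphi_a(z),\varphi_a(z)\rangle=\frac{(1-|a|^2)(1-|z|^2)}{(1-\langle z,a\rangle)(1-\overline{\langle z,a\rangle})}=\frac{(1-|a|^2)(1-|z|^2)}{|1-\langle z,a\rangle|^2}.
$$

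For (3), the Jacobian identity, I would use the standard fact that $\varphi_a$ is a biholomorphic self-map of $\mathbb{B}_n$ and compute the complex Jacobian determinant at a general point $w$. One clean route: the real Jacobian of a biholomorphism equals $|\det_{\mathbb C}\varphi_a'(w)|^2$, and there is a classical identity relating $\det_{\mathbb C}\varphi_a'(w)$ to the reproducing-kernel transformation rule. Concretely, from (1) one sees that the map $w\mapsto\varphi_a(w)$ pulls back the Bergman kernel $K(u,v)=(1-\langle u,v\rangle)^{-(n+1)}$ in a controlled way, and comparing $K(\varphi_a(w),\varphi_a(w))$ computed via (2) against $K(w,w)$ yields
$$
(J\varphi_a(w))=\frac{K(w,w)}{K(\varphi_a(w),\varphi_a(w))}=\frac{(1-|a|^2)^{n+1}}{|1-\langle w,a\rangle|^{2(n+1)}},
$$
since the involution property $\varphi_a\circ\varphi_a=\mathrm{id}$ forces the real Jacobian factor to be exactly this ratio (the change-of-variables formula applied to $\int_{\mathbb B_n}dv$ being invariant under $\varphi_a$ fixes the constant). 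Alternatively, one can differentiate the explicit rational expression for $\varphi_a$ directly, but the kernel-transformation argument is shorter and avoids a messy determinant computation. I expect step (1) to be the only genuinely laborious part; (2) and (3) then follow formally, with (3) requiring only the observation that a measure-preserving biholomorphic involution has real Jacobian equal to the displayed kernel ratio.
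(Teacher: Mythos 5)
The paper states this lemma without proof, as it is a classical fact (essentially Rudin, \emph{Function Theory in the Unit Ball of} $\mathbb{C}^n$, Theorems 2.2.2 and 2.2.6), so there is no in-paper argument to compare against. Your proof is correct and is the standard one. For (1), your decomposition of the numerators along $\mathbb{C}a$ and its orthogonal complement, the vanishing of the $P_a$-$Q_a$ cross terms, and the matching of $1-\langle z,a\rangle$ with $\overline{1-\langle w,a\rangle}=1-\langle a,w\rangle$ is exactly the bookkeeping that makes the identity collapse; carrying it through one finds the numerator of $1-\langle\varphi_a(z),\varphi_a(w)\rangle$ becomes $(1-\langle z,a\rangle)(1-\langle a,w\rangle)-\bigl(|a|^2-\langle a,w\rangle-\langle z,a\rangle+\langle z,a\rangle\langle a,w\rangle+(1-|a|^2)\langle z,w\rangle\bigr)=(1-|a|^2)(1-\langle z,w\rangle)$, as claimed. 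Part (2) is immediate by $w=z$.

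For (3), your primary route — the Bergman kernel transformation law $K(w,w)=K(\varphi(w),\varphi(w))\,|J_{\mathbb C}\varphi(w)|^2$ for a biholomorphic self-map $\varphi$ of $\mathbb{B}_n$, combined with (2) — is valid and not circular, since that transformation law is a general fact about Bergman kernels under biholomorphisms that does not presuppose the explicit Jacobian of $\varphi_a$. However, your parenthetical justification is off: the normalized Lebesgue measure $dv$ is \emph{not} invariant under $\varphi_a$ (only the Möbius-invariant measure $dv(z)/(1-|z|^2)^{n+1}$ is), and the involution property $\varphi_a\circ\varphi_a=\mathrm{id}$ is not what fixes the Jacobian. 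You should simply cite the kernel transformation theorem directly, or else compute $\det_{\mathbb C}\varphi_a'(w)$ from the explicit rational formula for $\varphi_a$; either alone suffices, and the extra remark as written would not survive scrutiny.
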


\begin{defn}
For $z, w\in\mathbb{B}_n$, define
$$
\rho(z,w)=|\varphi_z(w)|.
$$
$\rho$ is called the pseudo-hyperbolic metric on $\mathbb{B}_n$. Define
$$
\beta(z,w)=\frac{1}{2}\log\frac{1+\rho(z,w)}{1-\rho(z,w)}.
$$
$\beta$ is called the hyperbolic metric on $\bn$.
\end{defn}
From Lemma \ref{basic about varphi}, one can show that:
\begin{lem}
Fix $r>0$, then $\exists c, C>0$ such that
\begin{itemize}
\item[(1)] $c<\frac{1-|z|^2}{1-|w|^2}<C$, $\forall z, w, \beta(z,w)<r$.
\item[(2)] $c<\frac{|1-\langle z,w\rangle|}{1-|z|^2}<C$, $\forall z,w, \beta(z,w)<r$.
\end{itemize}
\end{lem}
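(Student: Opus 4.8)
The plan is to convert the hypothesis $\beta(z,w)<r$ into a quantitative estimate and then feed it through part~(2) of Lemma~\ref{basic about varphi}. Since $\beta(z,w)=\operatorname{arctanh}\rho(z,w)$ and $\tanh$ is increasing, $\beta(z,w)<r$ is equivalent to $\rho(z,w)=|\varphi_z(w)|<s$, where $s:=\tanh r\in(0,1)$; in particular $1-\rho(z,w)^2\ge 1-s^2>0$. Rewriting part~(2) of Lemma~\ref{basic about varphi} as
$$1-\rho(z,w)^2=\frac{(1-|z|^2)(1-|w|^2)}{|1-\langle z,w\rangle|^2}$$
(valid because $|1-\langle z,w\rangle|=|1-\langle w,z\rangle|$), and using also that $\rho(z,w)\ge 0$, one gets the two-sided chain
$$(1-s^2)\,|1-\langle z,w\rangle|^2\ \le\ (1-|z|^2)(1-|w|^2)\ \le\ |1-\langle z,w\rangle|^2 .$$
Everything else is elementary manipulation of this chain.

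First I would record a universal inequality, valid for \emph{all} $z,w\in\bn$ with no reference to $r$:
$|1-\langle z,w\rangle|\ge 1-\operatorname{Re}\langle z,w\rangle\ge 1-|z|\,|w|\ge 1-\tfrac12(|z|^2+|w|^2)\ge\tfrac12(1-|z|^2)$, and, by symmetry, $|1-\langle z,w\rangle|\ge\tfrac12(1-|w|^2)$. The lower bound in part~(2), namely $|1-\langle z,w\rangle|/(1-|z|^2)>\tfrac12$, is already contained in this. For the upper bound in part~(2) I would substitute $1-|w|^2\le 2|1-\langle z,w\rangle|$ into the chain above, obtaining $(1-s^2)|1-\langle z,w\rangle|^2\le 2(1-|z|^2)|1-\langle z,w\rangle|$, and hence (dividing by $|1-\langle z,w\rangle|>0$) $\dfrac{|1-\langle z,w\rangle|}{1-|z|^2}\le\dfrac{2}{1-s^2}=:C$.

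For part~(1) I would use the identity once more: dividing $(1-|z|^2)(1-|w|^2)=(1-\rho(z,w)^2)|1-\langle z,w\rangle|^2$ by $(1-|w|^2)^2$ gives $\dfrac{1-|z|^2}{1-|w|^2}=(1-\rho(z,w)^2)\Big(\dfrac{|1-\langle z,w\rangle|}{1-|w|^2}\Big)^{2}\le C^{2}$, where the last inequality uses $\rho\ge 0$ together with the just-proved part~(2) applied with the variables swapped. Since $\beta(z,w)=\beta(w,z)$, the same argument gives $\dfrac{1-|w|^2}{1-|z|^2}\le C^2$, so $C^{-2}<\dfrac{1-|z|^2}{1-|w|^2}<C^{2}$, which is part~(1) with constants $c=C^{-2}$ and $C^2$.

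There is essentially no obstacle here; the proof is a short computation. The only point worth flagging is that the key inequality $|1-\langle z,w\rangle|\ge\tfrac12\max\{1-|z|^2,\,1-|w|^2\}$ is completely independent of $r$, and that it is precisely plugging this back into the Möbius identity of Lemma~\ref{basic about varphi} that turns the qualitative condition $\beta(z,w)<r$ into the stated ratio bounds, with constants depending on $r$ only through $s=\tanh r$.
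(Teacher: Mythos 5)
Your proof is correct, and it follows exactly the route the paper indicates: the paper does not write out a proof but simply says ``From Lemma~\ref{basic about varphi}, one can show that,'' and your argument does precisely this, using Lemma~\ref{basic about varphi}(2) in the form $(1-|z|^2)(1-|w|^2)=(1-\rho(z,w)^2)\,|1-\langle z,w\rangle|^2$ together with the elementary bound $|1-\langle z,w\rangle|\ge\tfrac12\max\{1-|z|^2,1-|w|^2\}$. The only cosmetic point is that a couple of your inequalities are written as strict when they are most directly obtained non-strictly; this is harmless since the lemma only asserts the \emph{existence} of constants $c,C$, which one can enlarge slightly.
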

For~$r>0$, $z\in\bn$, write
$$D(z,r)=\{w\in\bn: \beta(w,z)< r\}=\{w\in\bn: \rho(w,z)< s_r\},$$
where~$s_r=\tanh r$.

\begin{lem}{\cite[2.2.7]{Rudin}}\label{description of D(z,R)}
For~$z\in\bn$, $r>0$, the hyperbolic ball~$D(z,r)$~consists of all~$w$~that satisfy:
$$\frac{|Pw-c|^2}{s_r^2\rho^2}+\frac{|Qw|^2}{s_r^2\rho}<1,$$
where~$P=P_z$, $Q=Q_z$, and
$$c=\frac{(1-s_r^2)z}{1-s_r^2|z|^2},~~~\rho=\frac{1-|z|^2}{1-s_r^2|z|^2}.$$
\end{lem}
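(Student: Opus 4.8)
The plan is to turn the geometric condition $w\in D(z,r)$, i.e. $\rho(z,w)=|\varphi_z(w)|<s_r$ with $s_r=\tanh r$, into an explicit polynomial inequality and then complete the square. Squaring and using part (2) of Lemma \ref{basic about varphi} (with the center $z$ in the role of $a$),
$$
\rho(z,w)^2<s_r^2\iff 1-\frac{(1-|z|^2)(1-|w|^2)}{|1-\langle w,z\rangle|^2}<s_r^2\iff (1-|z|^2)(1-|w|^2)>(1-s_r^2)\,|1-\langle w,z\rangle|^2 .
$$
The case $z=0$ is immediate: there $P=P_0=0$, $Q=I$, $c=0$, $\rho=1$, and the displayed ellipsoidal inequality reduces to $|w|^2<s_r^2$, which is $D(0,r)$. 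So from now on assume $z\neq 0$, put $t=|z|^2\in(0,1)$ and $e=z/|z|$.

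Next I would decompose $w=Pw+Qw$ along $\mathbb{C}z$ and its orthogonal complement. Orthogonality gives $|w|^2=|Pw|^2+|Qw|^2$ and $\langle w,z\rangle=\langle Pw,z\rangle$; writing $Pw=\xi e$ with $\xi=\langle w,e\rangle\in\mathbb{C}$ we get $|Pw|^2=|\xi|^2$ and $\langle w,z\rangle=\sqrt{t}\,\xi$. Substituting into the polynomial inequality and expanding $|1-\sqrt{t}\,\xi|^2=1-\sqrt{t}(\xi+\bar\xi)+t|\xi|^2$, the terms regroup so that the coefficient of $|\xi|^2$ is $-(1-s_r^2 t)$, the coefficient of $\xi+\bar\xi$ is $(1-s_r^2)\sqrt{t}$, the constant is $-(t-s_r^2)$, and there is a term $-(1-t)|Qw|^2$. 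Dividing through by $-(1-s_r^2 t)<0$ (note $0<s_r^2 t<1$) turns the condition into
$$
|\xi|^2-\frac{(1-s_r^2)\sqrt{t}}{1-s_r^2 t}\,(\xi+\bar\xi)+\frac{t-s_r^2}{1-s_r^2 t}+\frac{1-t}{1-s_r^2 t}\,|Qw|^2<0 .
$$

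Finally I would complete the square in $\xi$ about $\gamma=\dfrac{(1-s_r^2)\sqrt{t}}{1-s_r^2 t}$ (a real number) using $|\xi|^2-\gamma(\xi+\bar\xi)=|\xi-\gamma|^2-\gamma^2$, and verify the single nonobvious cancellation
$$
\gamma^2-\frac{t-s_r^2}{1-s_r^2 t}=\frac{(1-s_r^2)^2 t-(t-s_r^2)(1-s_r^2 t)}{(1-s_r^2 t)^2}=\frac{s_r^2(1-t)^2}{(1-s_r^2 t)^2}=s_r^2\rho^2,\qquad \rho=\frac{1-t}{1-s_r^2 t}.
$$
The inequality then becomes $|\xi-\gamma|^2+\rho\,|Qw|^2<s_r^2\rho^2$ (using $\frac{1-t}{1-s_r^2 t}=\rho$). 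Since $\gamma e=\frac{1-s_r^2}{1-s_r^2 t}\,z=c$ and $Pw-c=(\xi-\gamma)e$, so $|\xi-\gamma|=|Pw-c|$, dividing by $s_r^2\rho^2$ yields exactly
$$
\frac{|Pw-c|^2}{s_r^2\rho^2}+\frac{|Qw|^2}{s_r^2\rho}<1 ,
$$
with $c=\frac{(1-s_r^2)z}{1-s_r^2|z|^2}$ and $\rho=\frac{1-|z|^2}{1-s_r^2|z|^2}$, as claimed. There is no conceptual obstacle here: the whole argument is an elementary completing-the-square computation, and the only point needing care is the anisotropy between the $P_z$- and $Q_z$-directions — the extra factor $\rho$ in the second denominator — which is precisely what the displayed identity $\gamma^2-\frac{t-s_r^2}{1-s_r^2 t}=s_r^2\rho^2$ produces.
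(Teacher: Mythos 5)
Your computation is correct, and it reproduces the standard argument (which is essentially what Rudin does in 2.2.7): rewrite $\rho(z,w)<s_r$ as $(1-|z|^2)(1-|w|^2)>(1-s_r^2)|1-\langle w,z\rangle|^2$ via the identity for $1-|\varphi_z(w)|^2$, split $w$ along $\mathbb{C}z$ and its orthogonal complement, and complete the square in the $P_z$-coordinate. The paper itself supplies no proof, citing Rudin \cite[2.2.7]{Rudin} for the statement, so there is nothing in the text to compare against; your verification of the cancellation $\gamma^2-\tfrac{t-s_r^2}{1-s_r^2 t}=s_r^2\rho^2$ and the identification $\gamma e=c$ are exactly the two points that make the ellipsoid have the stated center and semi-axes, and the $z=0$ base case is handled correctly.
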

Thus~$D(z,r)$~is an ellipsoid with center~$c$, radius of~$s_r\rho$~in the~$z$~direction and~$s_r\sqrt{\rho}$~in the directions perpendicular to~$z$. Therefore the Lebesgue measure of~$D(z,r)$~is
$$v_n(D(z,r))=Cs_r^{2n}\rho^{n+1},$$
where~$C>0$~is a constant depending only on~$n$.
Note that when we fix~$r$, $\rho$~is comparable with~$1-|z|^2$. Hence~$v(D(z,r))$~is comparable with~$(1-|z|^2)^{n+1}$.

\begin{lem}\label{4equivalent}
Let~$\nu$~be a positive, finite, regular, Borel measure on~$\bn$~and~$r>0$. Then the following are equivalent.
When one of these conditions holds, $\nu$ is called a Carleson measure (for $\ber$).
\begin{itemize}
\item[(1)]$\sup_{z\in\bn}\int_{\bn}\frac{(1-|z|^2)^{n+1}}{|1-\langle w, z\rangle|^{2(n+1)}}d\nu(w)<\infty,$
\item[(2)]$\exists C>0:\int|f|^2d\nu\leq C\int|f|^2dv~\mbox{for all}~f\in\ber,$
\item[(3)]$\sup_{z\in\bn}\frac{\nu(D(z,~r))}{v_n(D(z,~r))}<\infty,$
\end{itemize}
\end{lem}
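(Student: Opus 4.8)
\emph{Proof plan.} The three conditions are best read through the normalized reproducing kernels
$$k_z(w)=\frac{K_z(w)}{\|K_z\|_{\ber}}=\frac{(1-|z|^2)^{(n+1)/2}}{(1-\langle w,z\rangle)^{n+1}},\qquad z,w\in\bn,$$
which satisfy $\|k_z\|_{L^2(v)}=1$, so that condition~(1) says precisely $\sup_{z\in\bn}\|k_z\|_{L^2(\nu)}^2<\infty$. I plan to establish the cycle $(2)\Rightarrow(1)\Rightarrow(3)\Rightarrow(2)$. The implication $(2)\Rightarrow(1)$ is immediate: since $k_z\in\ber$ has unit Bergman norm, applying the inequality in~(2) to $f=k_z$ gives $\|k_z\|_{L^2(\nu)}^2\le C$ uniformly in $z$.

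For $(1)\Rightarrow(3)$, fix the radius $r$ appearing in~(3). If $w\in D(z,r)$, then the lemma stating that, for $\beta(z,w)<r$, the quantities $1-|z|^2$, $1-|w|^2$ and $|1-\langle z,w\rangle|$ are each comparable, gives $|1-\langle w,z\rangle|\le C(1-|z|^2)$, whence
$$|k_z(w)|^2=\frac{(1-|z|^2)^{n+1}}{|1-\langle w,z\rangle|^{2(n+1)}}\ge c\,(1-|z|^2)^{-(n+1)}\ge c'\,v_n(D(z,r))^{-1},$$
where I used that $v_n(D(z,r))$ is comparable to $(1-|z|^2)^{n+1}$ (the computation following Lemma~\ref{description of D(z,R)}). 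Integrating against $\nu$ over $D(z,r)$ yields $\nu(D(z,r))\le C\,v_n(D(z,r))\,\|k_z\|_{L^2(\nu)}^2$, so $(3)$ follows from $(1)$.

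The substantive step is $(3)\Rightarrow(2)$, and its heart is a uniform sub-mean-value estimate: for every $f\in\ber$ and every $w\in\bn$,
$$|f(w)|^2\le\frac{C}{v_n(D(w,r))}\int_{D(w,r)}|f|^2\,dv,$$
with $C$ depending only on $n$ and $r$. To prove this I would transport to the origin by $\varphi_w$: since $\varphi_w(0)=w$ and $\varphi_w\circ\varphi_w=\mathrm{id}$, the function $g=f\circ\varphi_w$ is holomorphic and $D(w,r)$ is the $\varphi_w$-image of the Euclidean ball $\{|u|<s_r\}$, $s_r=\tanh r$. The sub-mean-value property of the plurisubharmonic function $|g|^2$ on $\{|u|<s_r\}$ gives $|f(w)|^2=|g(0)|^2\le s_r^{-2n}\int_{|u|<s_r}|g|^2\,dv$; the change of variables $u=\varphi_w(\zeta)$, with the Jacobian formula of Lemma~\ref{basic about varphi}(3) and the bounds $|1-\langle\zeta,w\rangle|\approx 1-|w|^2$ and $v_n(D(w,r))\approx(1-|w|^2)^{n+1}$ valid on $D(w,r)$, then produces the displayed inequality.

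Granting this estimate, for $f\in\ber$ I integrate it against $\nu$ and swap the order of integration, using that $\zeta\in D(w,r)\iff w\in D(\zeta,r)$ (symmetry of $\beta$) and that $v_n(D(w,r))$ is comparable to $v_n(D(\zeta,r))$ when $\beta(w,\zeta)<r$:
$$\int_{\bn}|f|^2\,d\nu\le C\int_{\bn}|f(\zeta)|^2\Bigl(\int_{D(\zeta,r)}\frac{d\nu(w)}{v_n(D(w,r))}\Bigr)dv(\zeta)\le C'\Bigl(\sup_{z}\frac{\nu(D(z,r))}{v_n(D(z,r))}\Bigr)\int_{\bn}|f|^2\,dv,$$
which is~(2). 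Although~(3) is phrased for a single fixed $r$, the chain $(3)_r\Rightarrow(2)\Rightarrow(1)\Rightarrow(3)_{r'}$ shows the condition is in fact independent of $r$. I expect the main obstacle to be the uniform sub-mean-value inequality via $\varphi_w$ (in particular making the constant independent of $w$); everything else is bookkeeping with Lemma~\ref{basic about varphi} and the volume estimate for $D(z,r)$.
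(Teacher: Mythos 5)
The paper does not include its own proof of this lemma; it is stated as a known characterization of Bergman--Carleson measures with a pointer to the cited references (Zhu's book, Su\'arez). Your argument is the standard one and it is correct: $(2)\Rightarrow(1)$ by testing against the normalized kernels $k_z$, which have unit Bergman norm; $(1)\Rightarrow(3)$ by the pointwise lower bound $|k_z|^2\gtrsim v_n(D(z,r))^{-1}$ on $D(z,r)$ coming from the comparabilities $|1-\langle w,z\rangle|\approx 1-|z|^2$ and $v_n(D(z,r))\approx(1-|z|^2)^{n+1}$; and $(3)\Rightarrow(2)$ via the uniform sub-mean-value estimate obtained by pulling back with $\varphi_w$, applying subharmonicity of $|f\circ\varphi_w|^2$ at the origin, and using the Jacobian formula plus the same comparabilities, followed by Fubini and the symmetry $\zeta\in D(w,r)\iff w\in D(\zeta,r)$. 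Your closing remark that the cycle $(3)_r\Rightarrow(2)\Rightarrow(1)\Rightarrow(3)_{r'}$ yields independence of $r$ is a correct and standard observation. I see no gaps; the constant in the sub-mean-value inequality is indeed uniform in $w$ because the estimate is transported to the origin and all remaining factors are controlled by constants depending only on $n$ and $r$.
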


Let $A$ be the algebra of bounded functions on $\mathbb{B}_n$ which are uniformly continuous in the hyperbolic metric, equipped with the supreme norm. Then $A$ is a commutative $C^*$ algebra. Let $M_A$ be its maximal ideal space. Then the unit ball $\mathbb{B}_n$ is naturally contained in $M_A$ as evaluations. The algebra $A$ can be used to study the properties of the Toeplitz operators( cf. \cite{Suarez04}\cite{Suarez07}).

\begin{defn}
A sequence $\{z_m\}\subseteq\mathbb{B}_n$ is said to be \emph{separated} if there exists $\delta>0$ such that $\rho(z_k,z_l)\geq\delta$ for $k\neq l$.

If  $x, y\in M_A$, define
$$\rho(x,y)=\sup\rho(\mathcal{S},\mathcal{T}),$$
 where $\mathcal{S}$, $\mathcal{T}$ run over all separated sequences in $\mathbb{B}_n$ so that $x\in\overline{\mathcal{S}}^{_A}$ and $y\in\overline{\mathcal{T}}^{_A}$. Here $\overline{\mathcal{S}}^{_A}$ denotes the closure in $M_A$. Define
$$
\beta(x,y)=\frac{1}{2}\log\frac{1+\rho(x,y)}{1-\rho(x,y)}.
$$

\end{defn}

For $x\in M_A$ and any net $\{z_{\alpha}\}$ that converges to it, there is a map $\varphi_x:\mathbb{B}_n\to M_A$ such that $a\circ\varphi_x\in A$ and $a\circ\varphi_{z_{\alpha}}\to a\circ\varphi_x $ uniformly on compact sets of $\mathbb{B}_n$, for all $a\in A$ (cf. \cite{Suarez04}).

The following Lemma was proved in \cite{Suarez04}, Section 3.2 for the unit disc and the same proof works for the $\mathbb{B}_n$ case verbatimly.

\begin{lem}\label{extended rho}
Let $x, y\in M_A\backslash\mathbb{B}_n$. Then
\begin{itemize}
\item[(1)] $\rho(x,y)=a<1$ if and only if $y=\varphi_x(w)$ for some $w$ with $|w|=a$.
\item[(2)] $y=\varphi_x(\xi)$ with $\xi\in\mathbb{B}_n$ if and only if every separated sequences $\mathcal{S}$, $\mathcal{T}$ such that $x\in\overline{\mathcal{S}}^{_A}$ and $y\in\overline{\mathcal{T}}^{_A}$ satisfy $\rho(\mathcal{T},\{\varphi_{z_n}(\xi): z_n\in\mathcal{S}\})=0$.
\item[(3)] $\rho(\varphi_x(\xi_1),\varphi_x(\xi_2))=\rho(\xi_1,\xi_2)$ for every $\xi_1$, $\xi_2\in\mathbb{B}_n$.
\item[(4)] $\beta$ is a $[0,+\infty]$-valued metric on $M_A$.
\end{itemize}
\end{lem}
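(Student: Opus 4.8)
The plan is to follow the argument of \cite[\S 3.2]{Suarez04}, which proves (1)--(4) for $n=1$, and to verify that every step carries over verbatim to $\mathbb{B}_n$; the only genuinely $n$-dimensional inputs are the identities of Lemma~\ref{basic about varphi} together with the M\"obius invariance $\rho(\varphi_z(\xi_1),\varphi_z(\xi_2))=\rho(\xi_1,\xi_2)$ ($z,\xi_1,\xi_2\in\mathbb{B}_n$) that they entail, and everything else is formal. The backbone is as follows. For $z\in\mathbb{B}_n$ the automorphism $\varphi_z$ is a $\beta$-isometry of $\mathbb{B}_n$, so $a\mapsto a\circ\varphi_z$ is a $*$-automorphism of $A$ and induces a homeomorphism $\tilde\varphi_z$ of $M_A$ that restricts to $\varphi_z$ on $\mathbb{B}_n$ with $\tilde\varphi_z\circ\tilde\varphi_z=\mathrm{id}$, while $\varphi_x$ ($x\in M_A$) is the pointwise limit on $\mathbb{B}_n$ of $\tilde\varphi_{z_\alpha}$ along any net $z_\alpha\to x$; in particular $\varphi_x(0)=x$, and $\varphi_x(\xi)\in M_A\setminus\mathbb{B}_n$ when $x\in M_A\setminus\mathbb{B}_n$, since $|\varphi_{z_\alpha}(\xi)|\to1$ by Lemma~\ref{basic about varphi}(2). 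The single technical device behind everything else is that for $E\subset\mathbb{B}_n$ the truncated hyperbolic distance $z\mapsto\min\{1,\beta(z,E)\}$ is $1$-Lipschitz for $\beta$, hence lies in $A$; consequently a point belonging to the $A$-closures of two sequences in $\mathbb{B}_n$ forces those sequences to be $\rho$-close, i.e. $\rho=0$ between them (after extracting separated subsequences, to which the separating-function argument applies directly). This already gives $\rho(x,x)=0$, hence $\beta(x,y)=0\Rightarrow x=y$, and symmetry of $\beta$ is immediate from the symmetry of $\rho(\mathcal{S},\mathcal{T})$ in the definition of $\rho$ on $M_A$.

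I would prove (3) next. Along a net $z_\alpha\to x$ one has $\rho(\varphi_{z_\alpha}(\xi_1),\varphi_{z_\alpha}(\xi_2))=\rho(\xi_1,\xi_2)$ and $\varphi_{z_\alpha}(\xi_i)\to\varphi_x(\xi_i)$ in $M_A$, so (3) reduces to a semicontinuity statement: if $u_\alpha\to u$, $v_\alpha\to v$ in $M_A$ with $u_\alpha,v_\alpha\in\mathbb{B}_n$ and $\rho(u_\alpha,v_\alpha)$ equal to a constant $c<1$, then $\rho(u,v)=c$. The inequality $\rho(u,v)\ge c$ comes from extracting (diagonally, using $u,v\in M_A\setminus\mathbb{B}_n$) separated sequences with matching indices out of $\{u_\alpha\}$ and $\{v_\alpha\}$ and feeding them into the definition of $\rho(u,v)$; the reverse inequality comes from the $\rho$-distance-zero mechanism: any separated $\mathcal{S},\mathcal{T}$ with $u\in\overline{\mathcal{S}}^{_A}$, $v\in\overline{\mathcal{T}}^{_A}$ are $\rho$-close to $\{u_\alpha\}$ and $\{v_\alpha\}$ respectively, so the triangle inequality for $\beta$ on $\mathbb{B}_n$ bounds $\rho(\mathcal{S},\mathcal{T})$ by $c$.

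For (2) and (1): if $y=\varphi_x(\xi)$ and $\mathcal{S}=\{z_k\}$, $\mathcal{T}$ are separated with $x\in\overline{\mathcal{S}}^{_A}$ and $y\in\overline{\mathcal{T}}^{_A}$, then $y$ lies in both $\overline{\mathcal{T}}^{_A}$ and $\overline{\{\varphi_{z_k}(\xi)\}}^{_A}$ (by the defining property of $\varphi_x$), so $\rho(\mathcal{T},\{\varphi_{z_k}(\xi)\})=0$ by the $\rho$-distance-zero mechanism; conversely, that condition determines $y$ as the unique common point of the relevant $A$-closures, namely $\varphi_x(\xi)$. Given (2), statement (1) follows: the implication ``$\Leftarrow$'' is immediate from (3), since $\varphi_x(0)=x$ gives $\rho(x,\varphi_x(w))=\rho(\varphi_x(0),\varphi_x(w))=\rho(0,w)=|w|$; for ``$\Rightarrow$'', if $\rho(x,y)=a<1$ take separated $\mathcal{S}=\{z_k\}$, $\mathcal{T}=\{t_k\}$ nearly realizing the supremum, note $|\varphi_{z_k}(t_k)|=\rho(z_k,t_k)$ stays below $1$, pass to a subnet with $\varphi_{z_k}(t_k)\to w\in\mathbb{B}_n$, $|w|\le a$, and apply (2) to get $y=\varphi_x(w)$, whence $|w|=a$ by ``$\Leftarrow$''. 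I expect this last identification $y=\varphi_x(w)$ to be the main obstacle: it is where the separating-function construction and the passage to subnets (keeping $x$ and $y$ in the relevant closures) must be done carefully, and it is the part whose verbatim validity in $\mathbb{B}_n$ one has to check.

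It remains to prove the triangle inequality in (4). If $\beta(x,y),\beta(y,z)<\infty$, write $y=\varphi_x(w_1)$ and $z=\varphi_y(w_2)$ with $|w_1|=\rho(x,y)$, $|w_2|=\rho(y,z)$ by (1). The composition law in $\mathrm{Aut}(\mathbb{B}_n)$ gives, for $a\in\mathbb{B}_n$, an identity $\varphi_{\varphi_a(w_1)}(w_2)=\varphi_a(v_a)$ with $\beta(0,v_a)\le\beta(0,w_1)+\beta(0,w_2)$; passing to the limit along a net $z_\alpha\to x$ (refined by compactness of $\mathcal{U}(n)$ so that $v_{z_\alpha}\to v$) yields $z=\varphi_x(v)$ with $\beta(0,v)\le\beta(0,w_1)+\beta(0,w_2)$, and then (1) gives $\beta(x,z)=\beta(0,v)\le\beta(x,y)+\beta(y,z)$. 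Since every estimate invoked is one-dimensional in nature or an immediate consequence of Lemma~\ref{basic about varphi}, the argument of \cite[\S 3.2]{Suarez04} applies to $\mathbb{B}_n$ without change.
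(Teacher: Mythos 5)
The paper does not write out a proof at all: it simply cites Su\'arez's Section~3.2 and asserts that the disc argument carries over to $\mathbb{B}_n$ verbatim, and your sketch is a faithful reconstruction of exactly what that verification would look like. You correctly isolate the only genuinely $n$-dimensional point, namely controlling the unitary factor in the M\"obius composition law by compactness of $\mathcal{U}(n)$ in the triangle-inequality step, which is precisely the same device the paper itself deploys later in the proof of Lemma~\ref{rhoxy}.
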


\begin{defn}
For $z\in\mathbb{B}_n$, define $U_z: L_a^2(\mathbb{B}_n)\to L_a^2(\mathbb{B}_n)$ to be
$$
U_z(f)=f\circ\varphi_z\cdot k_z.
$$
Here $k_z$ is the normalized reproducing kernel.
$$
k_z(w)=\frac{K_z(w)}{\|K_z\|}=\frac{(1-|z|^2)^{(n+1)/2}}{(1-\langle w,z\rangle)^{n+1}}.
$$
Then $U_z$ is an unitary operator on $L_a^2(\mathbb{B}_n)$ with $U_z^*=U_z$.
\end{defn}
The following Lemmas can be found in Section 8 and 10 of \cite{Suarez07}.
\begin{lem}
If $S\in\mathcal{T}(L^{\infty})$, then the map $\Psi_S:\mathbb{B}_n\to(\mathcal{B}(L_a^2(\mathbb{B}_n)), SOT)$, $z\mapsto S_z:=U_zSU_z$ extends continuously to $M_A$. We write $S_x$ for the operator $\Psi_S(x)$.
\end{lem}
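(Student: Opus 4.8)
The plan is to first establish the assertion on the dense subspace of ``finite-rank-like'' operators where it is essentially tautological, and then extend by a uniform-continuity/approximation argument, exploiting that $\toe$ is generated by Toeplitz operators with symbols in $L^\infty(\bn)$ and, more usefully, densely spanned (as a norm-closed algebra) by finite products of such. The key observation is that for a Toeplitz operator $T_g$ one has the classical Berezin-transform-type identity $U_z T_g U_z = T_{g\circ\varphi_z}$ (since $U_z$ intertwines multiplication by $g$ with multiplication by $g\circ\varphi_z$ up to the unimodular-free part and $U_z^*=U_z$), so the conjugation $S\mapsto S_z$ has a very concrete description on generators. The map $z\mapsto g\circ\varphi_z$, however, need not be norm-continuous into $L^\infty$; this is exactly why one works in the strong operator topology and with the algebra $A$ of hyperbolically uniformly continuous functions, whose maximal ideal space $M_A$ is the natural compactification on which $z\mapsto\varphi_z$ extends (as recorded just before the lemma).

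First I would reduce to $S$ in a dense subalgebra. Since $\toe$ is the norm closure of the algebra generated by $\{T_g: g\in L^\infty(\bn)\}$, it suffices to prove: (a) the set of $S\in\toe$ for which $\Psi_S$ extends continuously to $M_A$ (in the SOT) is norm-closed, and (b) it contains all finite products $T_{g_1}\cdots T_{g_k}$. For (a), one uses that $\|S_z\|=\|U_z S U_z\|=\|S\|$ for all $z$, so $\Psi_S$ is uniformly bounded; a uniform-norm-bounded net of SOT-continuous maps that converges uniformly in norm converges to an SOT-continuous map, and the same estimate $\|S_z-S'_z\|=\|S-S'\|$ propagates the extension from a norm-dense set, because $\Psi_{S}(x)=\lim_\alpha \Psi_{S}(z_\alpha)$ can be built as an SOT-limit once it is controlled uniformly in $S$. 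For (b), it is enough to handle a single Toeplitz operator $T_g$ and then multiply, since $U_z(ST)U_z=(U_zSU_z)(U_zTU_z)$ and SOT-continuity is preserved under products of uniformly bounded nets. So the heart of the matter is the single-symbol case.

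For $S=T_g$ with $g\in L^\infty(\bn)$, I would argue as follows. By density of $A$ (hyperbolically uniformly continuous bounded functions) and of simple functions, plus the fact that $T_{g_\alpha}\to T_g$ in SOT whenever $g_\alpha\to g$ in an appropriate sense, reduce further to $g\in A$. For $g\in A$ the map $z\mapsto g\circ\varphi_z$ is, by the very construction recalled in the excerpt, such that $g\circ\varphi_{z_\alpha}\to g\circ\varphi_x$ uniformly on compact subsets of $\bn$ whenever $z_\alpha\to x$ in $M_A$, and all these functions are uniformly bounded by $\|g\|_\infty$. I then define $\Psi_{T_g}(x)$ to be the operator $T_{g\circ\varphi_x}$ (where $g\circ\varphi_x\in A\subset L^\infty(\bn)$ by the cited property), and check SOT-continuity: for $f\in\ber$ fixed, $\|T_{g\circ\varphi_{z_\alpha}}f - T_{g\circ\varphi_x}f\|$ is controlled by the $\ber$-norm of $P_{\ber}((g\circ\varphi_{z_\alpha}-g\circ\varphi_x)f)$, which goes to $0$ by dominated convergence once one splits $f$ into its mass on a large compact set (where $g\circ\varphi_{z_\alpha}\to g\circ\varphi_x$ uniformly) and a small tail (handled by the uniform bound $\|g\|_\infty$). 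This gives continuity of $\Psi_{T_g}$ at every boundary point $x\in M_A\setminus\bn$, and continuity at interior points is immediate from continuity of $z\mapsto\varphi_z$ on $\bn$ and the same dominated-convergence estimate.

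\textbf{Main obstacle.} The delicate point is \emph{not} the single-symbol case but the passage from a norm-dense subalgebra to all of $\toe$ while keeping the extension well defined: one must verify that the candidate limit operator $\Psi_S(x)$ does not depend on the approximating net $\{z_\alpha\}$ converging to $x$, nor on the norm-approximation of $S$, and that the two limiting procedures (over $\alpha$ and over the approximants of $S$) can be interchanged. The estimate $\|U_zSU_z-U_zS'U_z\| = \|S-S'\|$, valid uniformly in $z$ and hence passing to $M_A$, is precisely what licenses this interchange: it shows $S\mapsto\Psi_S$ is an isometry for the sup-over-$M_A$ of the operator norm, so the set of $S$ with SOT-continuous extension is closed, and the extension is canonical. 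I expect writing this interchange carefully --- Moore--Osgood-style, using the uniform bound and the isometry --- to be the only genuinely technical part; everything else is the concrete identity $U_zT_gU_z=T_{g\circ\varphi_z}$ together with the already-established behavior of $\varphi_z$ on $M_A$.
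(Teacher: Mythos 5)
The paper does not prove this lemma; it cites it from Su\'arez (Sections~8 and~10 of \cite{Suarez07}), so the comparison below is against what such a proof must actually establish.

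Your skeleton is sensible: reduce via norm-closedness and stability under products to the single-symbol case $S=T_g$, and use the identity $U_zT_gU_z=T_{g\circ\varphi_z}$, which is correct and standard. The closedness argument (an $\varepsilon/3$ argument using $\|U_z(S-S')U_z\|=\|S-S'\|$ together with lower semicontinuity of the norm under SOT limits) and the product argument (SOT-continuity is preserved under products of uniformly bounded nets because $\|U_zSU_z\|=\|S\|$) are both fine. The dominated-convergence argument for $g\in A$ is also fine, using that $g\circ\varphi_{z_\alpha}\to g\circ\varphi_x$ uniformly on compacta with uniform bound $\|g\|_\infty$.

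The genuine gap is the reduction from $g\in L^\infty(\bn)$ to $g\in A$. You justify it by ``density of $A$ (and of simple functions), plus the fact that $T_{g_\alpha}\to T_g$ in SOT whenever $g_\alpha\to g$ in an appropriate sense.'' Neither half works. First, $A$ sits inside $C_b(\bn)$, which is a proper closed subspace of $L^\infty(\bn)$ in the sup norm, so $A$ is not $\|\cdot\|_\infty$-dense in $L^\infty$; there is no naive symbol-approximation giving $\|T_g-T_h\|\to 0$ with $h\in A$. Second, your closedness step (a) is a \emph{norm}-closedness statement: the interchange of limits you invoke relies on the uniform-in-$z$ estimate $\|\Psi_S(z)-\Psi_{S'}(z)\|=\|S-S'\|$, so SOT-approximation of $T_g$ by $T_{g_\alpha}$ does not propagate the extension. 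Thus the reduction to $g\in A$ is not available by elementary density; in fact the fact that $\toe$ is generated (in norm, as an algebra) by operators with nicer symbols is itself one of the deeper structural results of Su\'arez's theory, and using it here would be circular relative to the level of the argument you are attempting.

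What a correct proof must do instead is handle $g\in L^\infty$ directly at the single-symbol stage. The standard route (following Su\'arez) is to fix $f,h$ in a dense subset of $\ber$ and show that the scalar function $z\mapsto\langle (T_g)_zf,h\rangle=\int_{\bn} g\,(f\circ\varphi_z)\overline{(h\circ\varphi_z)}\,|k_z|^2\,dv$ is bounded and uniformly continuous in the hyperbolic metric, i.e.\ lies in $A$; this uses estimates of the type $\|U_zf-U_wf\|\lesssim\rho(z,w)$ on generating $f$ (closely related to the Lipschitz behavior of the Berezin transform and of $z\mapsto k_z$), not the regularity of the symbol $g$ itself. That gives a WOT-continuous extension to $M_A$. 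To upgrade to SOT one applies the same argument to $T_g^*T_g=T_{\bar g}T_g$ (which is why the lemma is stated at the level of the whole $C^*$-algebra and why the compatibility $(ST)_x=S_xT_x$, the content of the next lemma in the paper, is involved). Your proposal identifies the right skeleton but substitutes an unjustified symbol-regularity reduction for this essential step, which is precisely where the real work lies.
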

\begin{lem}
$x\in M_A$, $S, T\in\mathcal{T}(L^{\infty})$, then
$$
(ST)_x=S_xT_x, (S_x)^*=(S^*)_x, \|S_x\|\leq\|S\|.
$$
\end{lem}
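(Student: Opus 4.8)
\section*{Proof proposal for the final Lemma}

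\textbf{Plan.} The statement asserts that the assignment $S\mapsto S_x$ (obtained by extending $z\mapsto U_zSU_z$ from $\mathbb{B}_n$ to $M_A$ via SOT-limits along nets converging to $x$) is multiplicative, $\ast$-preserving, and contractive. The strategy is to verify each identity first on the dense subset $\mathbb{B}_n\subseteq M_A$, where it is an elementary consequence of $U_z$ being a self-adjoint unitary, and then to pass to the limit along a net $\{z_\alpha\}\to x$ in $M_A$, using joint SOT-continuity of multiplication on bounded sets and the fact (from the previous Lemma) that $\Psi_S$, $\Psi_T$, $\Psi_{ST}$, $\Psi_{S^*}$ all extend continuously to $M_A$. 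The norm bound is immediate once one knows $S_x$ is an SOT-limit of the operators $U_{z_\alpha}SU_{z_\alpha}$, each of norm $\|S\|$, because the closed ball of radius $\|S\|$ in $\mathcal{B}(L_a^2(\mathbb{B}_n))$ is SOT-closed.

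\textbf{Step 1: the identities on $\mathbb{B}_n$.} For $z\in\mathbb{B}_n$, $U_z$ is unitary with $U_z^*=U_z$ and $U_z^2=I$. Hence for $S,T\in\toe$,
\[
(ST)_z=U_z(ST)U_z=U_zSU_z^2TU_z=(U_zSU_z)(U_zTU_z)=S_zT_z,
\]
\[
(S_z)^*=(U_zSU_z)^*=U_z^*S^*U_z^*=U_zS^*U_z=(S^*)_z,
\]
and $\|S_z\|=\|U_zSU_z\|=\|S\|$ since $U_z$ is unitary.

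\textbf{Step 2: passing to $x\in M_A$.} Fix $x\in M_A$ and a net $\{z_\alpha\}\subseteq\mathbb{B}_n$ with $z_\alpha\to x$. By the preceding Lemma, $S_{z_\alpha}\to S_x$, $T_{z_\alpha}\to T_x$, $(ST)_{z_\alpha}\to(ST)_x$ and $(S^*)_{z_\alpha}\to(S^*)_x$, all in SOT. The nets $\{S_{z_\alpha}\}$, $\{T_{z_\alpha}\}$ are uniformly bounded (by $\|S\|$, $\|T\|$), so multiplication is jointly SOT-continuous on them: for every $f\in L_a^2(\mathbb{B}_n)$,
\[
(ST)_xf=\lim_\alpha (ST)_{z_\alpha}f=\lim_\alpha S_{z_\alpha}T_{z_\alpha}f=\lim_\alpha S_{z_\alpha}(T_xf)=S_x(T_xf),
\]
where the third equality uses $T_{z_\alpha}f\to T_xf$ together with the uniform bound on $\{S_{z_\alpha}\}$ (so that $\|S_{z_\alpha}(T_{z_\alpha}f-T_xf)\|\le\|S\|\,\|T_{z_\alpha}f-T_xf\|\to0$), and the fourth uses $S_{z_\alpha}(T_xf)\to S_x(T_xf)$. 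This gives $(ST)_x=S_xT_x$. For the adjoint, note $\|S_x\|\le\|S\|$ (from the norm bound below), and for $f,g\in L_a^2(\mathbb{B}_n)$,
\[
\langle (S^*)_xf,g\rangle=\lim_\alpha\langle(S^*)_{z_\alpha}f,g\rangle=\lim_\alpha\langle(S_{z_\alpha})^*f,g\rangle=\lim_\alpha\langle f,S_{z_\alpha}g\rangle=\langle f,S_xg\rangle=\langle(S_x)^*f,g\rangle,
\]
so $(S^*)_x=(S_x)^*$. Finally, $\|S_xf\|=\lim_\alpha\|S_{z_\alpha}f\|\le\|S\|\,\|f\|$ for each $f$, hence $\|S_x\|\le\|S\|$.

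\textbf{Main obstacle.} The only subtlety is the interchange of limits in the multiplicativity step: SOT-convergence $S_{z_\alpha}\to S_x$ does not by itself give $S_{z_\alpha}h_\alpha\to S_xh$ for a varying net $h_\alpha\to h$ unless the operator net is uniformly bounded. That uniform bound is supplied by Step 1 (each $\|S_{z_\alpha}\|=\|S\|$), so the argument closes; one just has to be careful to write the $\varepsilon/2$ splitting $S_{z_\alpha}T_{z_\alpha}f-S_xT_xf=S_{z_\alpha}(T_{z_\alpha}f-T_xf)+(S_{z_\alpha}-S_x)(T_xf)$ explicitly. Everything else is formal manipulation with the self-adjoint unitaries $U_z$ and the already-established continuous extension of $\Psi_S$.
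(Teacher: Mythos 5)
Your proof is correct and is the standard argument; the paper itself does not prove this lemma but instead cites it from Sections 8 and 10 of Su\'arez's paper, and your self-contained derivation (verify the algebraic identities on $\mathbb{B}_n$ using that $U_z$ is a self-adjoint unitary with $U_z^2=I$, then pass to SOT-limits along a net $z_\alpha\to x$, using the uniform bound $\|S_{z_\alpha}\|=\|S\|$ to justify the interchange of limit and product) is precisely the route one would take. One small remark: the parenthetical appeal to the norm bound inside the adjoint computation is unnecessary, since that chain of equalities uses only WOT-convergence of $S_{z_\alpha}$ and $(S^*)_{z_\alpha}$, both of which follow directly from the SOT-continuity supplied by the preceding lemma.
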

From the Lemma, we see that for any normal Toeplitz operator $S$ and any $f\in C(\sigma(S))$, $f(S)_x=f(S_x)$.

\begin{lem}\label{essential norm}
$S\in\mathcal{T}(L^{\infty})$, then
$$
\|S\|_e=\sup_{x\in M_A\backslash\mathbb{B}_n}\|S_x\|.
$$
\end{lem}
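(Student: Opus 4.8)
The plan is to read the asserted formula as the statement that a certain $*$-homomorphism is isometric; the only genuinely hard ingredient will be a known compactness criterion, everything else being soft $C^{*}$-algebra bookkeeping. Let
$$
\mathcal{A}=\Big\{(A_x)_{x\in M_A\setminus\mathbb{B}_n}\ :\ A_x\in B(\ber),\ \sup_{x}\|A_x\|<\infty\Big\}
$$
with coordinatewise operations and supremum norm; this is a $C^{*}$-algebra. By the two lemmas preceding the statement, $z\mapsto S_z$ extends continuously (in SOT) to $M_A$, and $(ST)_x=S_xT_x$, $(S^{*})_x=(S_x)^{*}$, $\|S_x\|\leq\|S\|$; hence $\pi:\toe\to\mathcal{A}$, $\pi(S)=(S_x)_{x\in M_A\setminus\mathbb{B}_n}$, is a well-defined $*$-homomorphism, and in particular it is contractive, so $\sup_{x}\|S_x\|=\|\pi(S)\|\leq\|S\|$.

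Next I would verify $\mathcal{K}(\ber)\subseteq\ker\pi$ (recall $\mathcal{K}(\ber)\subseteq\toe$). Fix $x\in M_A\setminus\mathbb{B}_n$ and a net $\{z_\alpha\}\subseteq\mathbb{B}_n$ with $z_\alpha\to x$ in $M_A$. Since the coordinate functions lie in $A$, the subset $\mathbb{B}_n\subseteq M_A$ carries its Euclidean topology, and because $x\notin\mathbb{B}_n$ the net must leave every compact subset of $\mathbb{B}_n$, so $|z_\alpha|\to1$. For $a,b\in\mathbb{B}_n$ the reproducing property gives $\langle U_{z_\alpha}K_a,K_b\rangle=K_a(\varphi_{z_\alpha}(b))\,k_{z_\alpha}(b)$, where $|K_a(\varphi_{z_\alpha}(b))|\leq(1-|a|)^{-(n+1)}$ while $|k_{z_\alpha}(b)|\leq(1-|z_\alpha|^{2})^{(n+1)/2}(1-|b|)^{-(n+1)}\to0$; since the $K_b$ span a dense subspace and $\|U_{z_\alpha}\|=1$, it follows that $U_{z_\alpha}\to0$ in the weak operator topology. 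Writing a compact $K$ as a norm limit of finite rank operators and using $U_z(f\otimes g)U_z=(U_zf)\otimes(U_zg)$ on rank-one operators gives $U_{z_\alpha}KU_{z_\alpha}\to0$ weakly; but $U_{z_\alpha}KU_{z_\alpha}=K_{z_\alpha}\to K_x$ strongly, hence weakly, so $K_x=0$. Therefore $\pi$ descends to $\bar\pi:\toe/\mathcal{K}(\ber)\to\mathcal{A}$, and since the quotient norm is exactly the essential norm, contractivity of $\bar\pi$ already yields $\sup_{x}\|S_x\|\leq\|S\|_e$.

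It remains to prove the reverse inequality, and since a $*$-homomorphism between $C^{*}$-algebras is isometric precisely when it is injective, this amounts to $\ker\pi\subseteq\mathcal{K}(\ber)$; that is, \emph{if $S\in\toe$ satisfies $S_x=0$ for all $x\in M_A\setminus\mathbb{B}_n$, then $S$ is compact.} The first step is to pass to the Berezin transform $\widetilde{S}(z)=\langle Sk_z,k_z\rangle$: since $k_z=U_zK_0$ one has $\widetilde{S}(z)=\langle S_zK_0,K_0\rangle$, and continuity of $z\mapsto S_z$ together with compactness of $M_A$ forces $\widetilde{S}(z)\to0$ as $|z|\to1$ (pass to a subnet converging in $M_A$; its limit lies in $M_A\setminus\mathbb{B}_n$ because $|z|\to1$, and there $S_x=0$).

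The remaining implication --- that $S\in\toe$ with $\widetilde{S}(z)\to0$ as $|z|\to1$ is compact --- is Su\'arez's theorem extending the Axler--Zheng criterion from Toeplitz operators to the whole algebra, and this is the main obstacle. Its proof proceeds through the approximation of $\toe$ by ``localized'' (band-dominated) operators, a partition of unity subordinate to a hyperbolic lattice $\{D(z_k,r)\}$ of bounded overlap, and decay estimates for products of Toeplitz operators obtained from Lemma~\ref{basic about varphi}(2) together with Forelli--Rudin-type integral inequalities; it is this localization machinery that we invoke from \cite{Suarez07} rather than reproduce. Granting it, $\bar\pi$ is injective, hence isometric, and therefore
$$
\|S\|_e=\|S+\mathcal{K}(\ber)\|_{\toe/\mathcal{K}(\ber)}=\bigl\|\bar\pi\bigl(S+\mathcal{K}(\ber)\bigr)\bigr\|_{\mathcal{A}}=\sup_{x\in M_A\setminus\mathbb{B}_n}\|S_x\|.
$$
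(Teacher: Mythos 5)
Your argument is correct, and it is worth noting that the paper itself does not prove this lemma at all; it simply states it and refers the reader to Sections 8 and 10 of Su\'arez \cite{Suarez07}, where it is established by a direct localization argument. What you do differently is repackage the statement as a piece of soft $C^*$-algebra bookkeeping sitting on top of a \emph{different} result of Su\'arez, namely the Axler--Zheng-type compactness criterion: an operator in $\toe$ whose Berezin transform vanishes on $\pbn$ is compact. Your chain is clean: the localization map $S\mapsto(S_x)_{x\in M_A\setminus\mathbb{B}_n}$ is a contractive $*$-homomorphism; it kills compacts (your WOT argument with $U_{z_\alpha}\to0$ is correct — $k_{z_\alpha}(b)\to0$ uniformly on Euclidean-compact sets, hence on the dense span of kernels); so it descends to $\toe/\mathcal{K}$, giving the easy inequality; and the hard inequality reduces to injectivity of the descended map, since an injective $*$-homomorphism of $C^*$-algebras is automatically isometric. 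The injectivity is exactly where the Axler--Zheng criterion is used, via the observation $\widetilde S(z)=\langle S_z k_0,k_0\rangle$, together with a compactness-of-$M_A$ argument showing that vanishing of all $S_x$ forces $\widetilde S\to0$ at the boundary. This is a more modular and conceptually transparent derivation than simply citing the essential-norm theorem directly, and it makes explicit which part of Su\'arez's machinery is truly doing the analytic work (the compactness criterion and the lattice/localization estimates behind it) and which part is formal $C^*$-algebra. The one thing you should confirm before adopting this route in earnest is that Su\'arez's proof of the compactness criterion does not itself rely on the essential-norm formula (it does not: in \cite{Suarez07} the Axler--Zheng criterion precedes the essential-norm theorem), so the reduction is genuinely non-circular.
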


Suppose $H$ is a Hilbert space and $H_1$, $H_2$ are subspaces of $H$. When is $H_1+H_2$ closed? Write $H_3=H_1\cap H_2$. Then $H_1+H_2=(H_1\ominus H_3+H_2\ominus H_3)\oplus H_3$. Therefore $H_1+H_2$ is closed if and only if $H_1\ominus H_3+H_2\ominus H_3$ is closed. In the case when $H_1\cap H_2=\{0\}$, by open mapping Theorem we know that $H_1+H_2$ is closed if and only if the norm on $H_1+H_2$ is equivalent to the norm on $H_1\oplus H_2$.
\begin{defn}\label{defnangle}
Suppose $H_1$, $H_2$ are subspaces of a Hilbert space $H$. And write $H_3=H_1\cap H_2$.
We define the \emph{angle} of $H_1$ and $H_2$ to be
$$
\arccos\sup\{\frac{|\langle u,v\rangle|}{\|u\|\|v\|}: u\in H_1\ominus H_3, v\in H_2\ominus H_3\}.
$$
\end{defn}

\begin{lem}\label{angle}
The angle is positively related to the following quantities.
\begin{itemize}
\item[(1)] $\inf\{\frac{\|u-v\|^2}{\|u\|^2+\|v\|^2}:u\in H_1\ominus H_3, v\in H_2\ominus H_3\}$,
\item[(2)] $1-\|H_2H_1-H_3\|$,
\item[(3)] $1-\|H_1H_2H_1-H_3\|$.
\end{itemize}
\end{lem}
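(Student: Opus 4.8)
The plan is to compute all three quantities explicitly in terms of the cosine of the Friedrichs angle. Write $c$ for the supremum appearing in Definition \ref{defnangle}, so that the angle between $H_1$ and $H_2$ equals $\arccos c$ with $c\in[0,1]$ (with the convention that $c=0$ if either of $H_1\ominus H_3$, $H_2\ominus H_3$ is trivial). Since $\arccos$ is strictly decreasing on $[0,1]$, it is enough to show that quantities (1) and (2) each equal $1-c$ and that quantity (3) equals $1-c^2$; then, since the angle $\arccos c$ is itself a strictly increasing function of $1-c$ on $[0,1]$, all three quantities are strictly increasing functions of the angle, and in particular the angle is positive if and only if each of the three quantities is positive.

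For (1), I would fix nonzero $u\in H_1\ominus H_3$, $v\in H_2\ominus H_3$ and write $\|u-v\|^2=\|u\|^2+\|v\|^2-2\,\mathrm{Re}\langle u,v\rangle$, so that $\frac{\|u-v\|^2}{\|u\|^2+\|v\|^2}=1-\frac{2\,\mathrm{Re}\langle u,v\rangle}{\|u\|^2+\|v\|^2}$. Replacing $u$ by a suitable unimodular multiple makes $\langle u,v\rangle$ real and nonnegative without affecting the norms, and the arithmetic--geometric mean inequality gives $2\,\mathrm{Re}\langle u,v\rangle\le 2|\langle u,v\rangle|\le c(\|u\|^2+\|v\|^2)$; conversely, rescaling $u$ and $v$ to have a common norm leaves $\frac{|\langle u,v\rangle|}{\|u\|\|v\|}$ unchanged while turning the AM--GM step into an equality. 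Hence the supremum of $\frac{2\,\mathrm{Re}\langle u,v\rangle}{\|u\|^2+\|v\|^2}$ over all such $u,v$ is exactly $c$, and taking the infimum in (1) yields $1-c$.

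For (2) and (3) I would pass to the reduced projections. Since $H_3=H_1\cap H_2$ we have $H_1H_3=H_3H_1=H_3$ and $H_2H_3=H_3H_2=H_3$; writing $P_1=H_1-H_3$ and $P_2=H_2-H_3$ for the orthogonal projections onto $H_1\ominus H_3$ and $H_2\ominus H_3$, these identities give $H_3P_1=P_1H_3=0$ and $H_3P_2=P_2H_3=0$. A short computation then gives
\[
H_2H_1-H_3=H_2(H_1-H_3)=(H_3+P_2)P_1=P_2P_1,
\qquad
H_1H_2H_1-H_3=H_1(H_2-H_3)H_1=P_1P_2P_1=(P_2P_1)^*(P_2P_1).
\]
It remains to identify $\|P_2P_1\|$ with $c$: for unit vectors $x,y$ one has $|\langle P_2P_1x,y\rangle|=|\langle P_1x,P_2y\rangle|$, and as $x,y$ range over the unit ball the vectors $P_1x$, $P_2y$ sweep out the unit balls of $H_1\ominus H_3$ and $H_2\ominus H_3$; taking suprema (the cases $P_1x=0$ or $P_2y=0$ contribute $0$) shows $\|P_2P_1\|=\sup\{|\langle u,v\rangle|:u\in H_1\ominus H_3,\ v\in H_2\ominus H_3,\ \|u\|\le1,\ \|v\|\le1\}=c$. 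Therefore (2) $=1-\|P_2P_1\|=1-c$ and (3) $=1-\|P_2P_1\|^2=1-c^2$, which proves the claimed monotone relationship with the angle. The computation is essentially routine; the only steps requiring care are the bookkeeping with the projection identities above and the attainment argument showing that $\|P_2P_1\|$ equals, rather than merely dominates, the supremum of the relevant inner products.
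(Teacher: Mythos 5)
Your proof is correct, and for items (2) and (3) it takes a cleaner and more precise route than the paper. The treatment of quantity (1) is essentially identical to the paper's: both you and the authors rewrite $\frac{\|u-v\|^2}{\|u\|^2+\|v\|^2}$ as $1-\frac{2\,\mathrm{Re}\langle u,v\rangle}{\|u\|^2+\|v\|^2}$, use a unimodular phase and a rescaling $a=\sqrt{\|v\|/\|u\|}$ to pass between the arithmetic-mean and geometric-mean normalizations, and conclude that (1) equals $1-c$ where $c$ is the Friedrichs cosine. Where you diverge is in relating (2) and (3) to the angle. The paper proves the single algebraic identity $(H_2H_1-H_3)^*(H_2H_1-H_3)=H_1H_2H_1-H_3$ (which you also obtain, in the equivalent form $P_1P_2P_1=(P_2P_1)^*(P_2P_1)$) to link (2) and (3), and then connects (1) to (2) by a looser two-sided estimate: it fixes $u\in H_1\ominus H_3$, argues that the infimum in (1) is attained up to universal constants at $v=H_2u$, and uses $\frac{\|u-H_2u\|^2}{\|u\|^2+\|H_2u\|^2}\approx\frac{\|u-H_2u\|^2}{\|u\|^2}$. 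That argument only yields comparability ("positively related"), which is all the lemma is used for. You instead compute the reduced projections exactly, showing $H_2H_1-H_3=P_2P_1$ and $H_1H_2H_1-H_3=P_1P_2P_1$ via $H_3P_i=P_iH_3=0$, and then identify $\|P_2P_1\|=c$ directly. This gives the sharper statements $(1)=(2)=1-c$ and $(3)=1-c^2$, from which the "positively related" assertion follows immediately, and it avoids the paper's approximation step (whose displayed inequality chain is in fact slightly garbled, though harmless for the comparability conclusion). Both arguments are sound; yours buys exact formulas at no extra cost, which is arguably preferable.
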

\begin{proof}
For the relation between the angle and (1), take $v$ by $-v$ in the previous equality. We get
$$
\frac{\|u-v\|^2}{\|u\|^2+\|v\|^2}=1-\frac{2Re\langle u,v\rangle}{\|u\|^2+\|v\|^2}.
$$
Therefore
\begin{eqnarray*}
&&\inf\{\frac{\|u-v\|^2}{\|u\|^2+\|v\|^2}:u\in H_1\ominus H_3, v\in H_2\ominus H_3\}\\
&=&1-\sup\{\frac{2Re\langle u,v\rangle}{\|u\|^2+\|v\|^2}:u\in H_1\ominus H_3, v\in H_2\ominus H_3\}\\
&=&1-\sup\{\frac{2|\langle u,v\rangle|}{\|u\|^2+\|v\|^2}:u\in H_1\ominus H_3, v\in H_2\ominus H_3\}
\end{eqnarray*}
Since
$$
\frac{|\langle u,v\rangle|}{\|u\|^2+\|v\|^2}\leq\frac{|\langle u,v\rangle|}{2\|u\|\|v\|}=\frac{|\langle au,a^{-1}v\rangle|}{\|au\|^2+\|a^{-1}v\|^2},
$$
where $a=\sqrt{\frac{\|v\|}{\|u\|}}$, we have
\begin{eqnarray*}
&&\inf\{\frac{\|u-v\|^2}{\|u\|^2+\|v\|^2}:u\in H_1\ominus H_3, v\in H_2\ominus H_3\}\\
&=&1-\sup\{\frac{|\langle u,v\rangle|}{\|u\|\|v\|}:u\in H_1\ominus H_3, v\in H_2\ominus H_3\}.
\end{eqnarray*}

Also, since $(H_2H_1-H_3)^*(H_2H_1-H_3)=H_1H_2H_1-H_3$, the quantities (2) and (3) are positively related. Now we show that (1) and (2) are positively related. Fix $u\in H_1\ominus H_3$, for any $v\in H_2\ominus H_3$,
\begin{eqnarray*}
&&\frac{\|u-v\|^2}{\|u\|^2+\|v\|^2}\geq \frac{\|u-v\|^2}{2\|u\|^2+\|u-v\|^2}\\
&\geq&\frac{\|u-H_2u\|^2}{2\|u\|^2+\|u-H_2u\|^2}\geq\frac{\|u-H_2u\|^2}{3\|u\|^2+3\|H_2u\|^2}.
\end{eqnarray*}
Also,
$$
\frac{\|u-H_2u\|^2}{\|u\|^2+\|H_2u\|^2}\approx\frac{\|u-H_2u\|^2}{\|u\|^2}.
$$
The first inequality shows that the infimum in (1) is obtained (modulo a constant) by taking $v=H_2u$. The second inequality shows that (1) and (2) are positively related. This completes the proof.
\end{proof}

From our discussion before Definition \ref{defnangle} and Lemma \ref{angle}(1), the following corollary is immediate.

\begin{cor}
$H_1+H_2$ is closed if and only if their angle is non-zero.
\end{cor}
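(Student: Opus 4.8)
The plan is to deduce the corollary directly from the two ingredients assembled just above its statement: the orthogonal splitting $H_1+H_2=(H_1\ominus H_3+H_2\ominus H_3)\oplus H_3$, and Lemma~\ref{angle}(1), which ties the angle to the infimum
$\iota:=\inf\{\|u-v\|^2/(\|u\|^2+\|v\|^2):u\in H_1\ominus H_3,\ v\in H_2\ominus H_3\}$.
First I would record the elementary reduction that $H_1+H_2$ is closed if and only if $H_1'+H_2'$ is closed, where $H_i':=H_i\ominus H_3$: indeed $H_3\perp(H_1'+H_2')$, so if $H_1'+H_2'$ is closed then $(H_1'+H_2')\oplus H_3$ is a sum of two mutually orthogonal closed subspaces, hence closed; conversely $H_1'+H_2'=(H_1+H_2)\cap H_3^{\perp}$, an intersection of closed sets, so it is closed whenever $H_1+H_2$ is. This reduces everything to the case $H_1\cap H_2=\{0\}$, i.e.\ to showing that $H_1'+H_2'$ is closed iff $\iota>0$.

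For that equivalence I would invoke the standard ``bounded below $\Leftrightarrow$ closed range'' principle. Consider the bounded operator $T:H_1'\oplus H_2'\to H$, $T(u,v)=u-v$ (the sign is immaterial since $-H_2'=H_2'$). Since $H_1'\cap H_2'=\{0\}$, $T$ is injective and its range is exactly $H_1'+H_2'$. If $H_1'+H_2'$ is closed then it is a Banach space, so $T$ is a continuous bijection onto it and the bounded inverse theorem yields $c>0$ with $\|u-v\|^2=\|T(u,v)\|^2\ge c(\|u\|^2+\|v\|^2)$ for all $u\in H_1'$, $v\in H_2'$; taking the infimum gives $\iota\ge c>0$. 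Conversely, if $\iota>0$ then $T$ is bounded below, hence has closed range: if $u_k-v_k$ converges in $H$, the lower bound forces $(u_k,v_k)$ to be Cauchy in $H_1'\oplus H_2'$, and its limit $(u,v)$ satisfies $u-v=\lim_k(u_k-v_k)\in H_1'+H_2'$. So $H_1'+H_2'$ is closed precisely when $\iota>0$.

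Finally, Lemma~\ref{angle}(1) states that the angle of $H_1$ and $H_2$ is positively related to $\iota$ — in fact the computation in its proof identifies $\iota$ with $1-\cos(\text{angle})$ — so the angle is non-zero exactly when $\iota>0$. Chaining the two equivalences yields: $H_1+H_2$ closed $\iff$ $\iota>0$ $\iff$ angle non-zero, which is the corollary. I do not expect a real obstacle here, since the statement is ``immediate'' once these pieces are combined; the only point requiring a word of care is the appeal to the bounded inverse theorem, which needs $H_1'\oplus H_2'$ and the (assumed closed, hence complete) space $H_1'+H_2'$ to be recognised as Banach spaces. (The degenerate cases $H_1\subseteq H_2$ or $H_2\subseteq H_1$, where the defining set is empty and $\iota$ is read as $+\infty$, are consistent with the statement: the sum is trivially closed and the angle equals $\pi/2\ne 0$.)
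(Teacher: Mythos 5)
Your proof is correct and follows exactly the route the paper intends: it combines the orthogonal splitting $H_1+H_2=(H_1\ominus H_3+H_2\ominus H_3)\oplus H_3$ discussed just before Definition~\ref{defnangle} with Lemma~\ref{angle}(1), using the open mapping (bounded inverse) theorem for the forward direction and the standard ``bounded below $\Rightarrow$ closed range'' argument for the converse, which is precisely what the paper has in mind when it calls the corollary ``immediate.'' You have simply made explicit the routine details that the paper leaves to the reader.
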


As a consequence of Lemma \ref{angle}, when the projection operators to subspaces $H_1$, $H_2$ and $H_3$ all change continuously, the quantities in (3) change continuously, therefore the angles have a uniform lower bound on any compact set of parameters. This fact will be used in the proof of our main theorem.
\section{Decomposition of Varieties}
We begin this section with an example.
\begin{exam}\label{example hyperplane}
Suppose $\tilde{M}_1$ and $\tilde{M}_2$ are two linear subspaces of $\cn$. $\tilde{M}_3=\tilde{M}_1\cap \tilde{M}_2$. Let $M_i=\tilde{M}_i\cap\bn$ and $Q_i=\overline{span}\{K_{\lambda}| \lambda\in M_i\}\subseteq\ber$, $i=1,2,3$. Then
$$
\|Q_2Q_1Q_2-Q_3\|\leq a<1
$$
where the number $a$ depends on the angle between $\tilde{M}_1$ and $\tilde{M}_2$. As a consequence, $Q_1+Q_2$ is closed and $Q_1\cap Q_2=Q_3$.

\begin{proof}
First, to simplify notation, we use $Q_i$ and $M_i$ to denote both the spaces and the projection operators.

Let $\epsilon>0$ be determined later. Choose $k\in\mathbb{N}$ (depending on $\epsilon$) so that $\forall v\in M_2\ominus M_3$,
$$
|(M_2M_1)^kv|\leq\epsilon|v|.
$$

Clearly the operator $Q_2Q_1Q_2-Q_3$ vanishes on $Q_2^{\perp}$ and $Q_3$. For any $f\in Q_2\ominus Q_3$ with $\|f\|=1$, since
$$
(Q_2Q_1Q_2-Q_3)^kf=(Q_2Q_1)^kf,
$$
it suffices to prove that
$$
\|(Q_2Q_1)^kf\|\leq a^k.
$$
Let $d=\dim M_2$, by Example 3.3 in \cite{our paper}, the measure $\mu=c(1-|z|^2)^{n-d}dv_{M_2}$ with a suitable normalizing constant $c$  has the property that
$$\|Q_2 g\|^2=\int_{M_2}|g|^2d\mu, ~~~\forall g\in\ber.$$
Here $v_{M_2}$ is the volume measure on $M_2$.

It is easy to see that $Q_if(z)=f(M_i(z)), i=1,2,3$. Here we use $M_i$ to denote the projection operators to $\tilde{M}_i$. Now for any $z\in M_2$,
\begin{eqnarray*}
(Q_2Q_1)^kf(z)&=&Q_1(Q_2Q_1)^{k-1}f(z)\\
&=&(Q_2Q_1)^{k-1}f(M_1z)\\
&=&(Q_2Q_1)^{k-1}f(M_2M_1z)\\
&=&\cdots\\
&=&f((M_2M_1)^kz).
\end{eqnarray*}
\begin{eqnarray*}
(M_2M_1)^kz&=&(M_2M_1)^kM_3z+(M_2M_1)^k(1-M_3)z\\
&=&M_3z+(M_2M_1)^k(1-M_3)z.
\end{eqnarray*}

By the choice of $k$,
$$
|(M_2M_1)^k(1-M_3)z|^2\leq\epsilon^2|(1-M_3)z|^2\leq\epsilon^2(1-|M_3z|^2).
$$
Therefore the pseudo-hyperbolic metric
$$\rho((M_2M_1)^kz, M_3z)\leq r_{\epsilon},$$
where $r_{\epsilon}\to0$ when $\epsilon\to0$.

Before continuing, we need the following lemma.

\begin{lem}\label{continuity of function}
$\exists C>0$, $\forall g\in Hol(\mathbb{B}_d)$, $\forall z, w\in\mathbb{B}_d$, $\beta(z, w)<1/2$
$$
|g(z)-g(w)|^2\leq C\frac{\rho(z, w)^2}{(1-|w|^2)^{d+1}}\int_{D(w)}|g(\eta)|^2dv(\eta),
$$
where $D(w)=\{z | \beta(z,w)<1\}$.
\end{lem}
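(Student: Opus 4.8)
The plan is to conjugate by the automorphism $\varphi_w$ so as to move $w$ to the origin, reduce the estimate to a model inequality at $0$, and then prove that model inequality by an elementary interior estimate for holomorphic functions. First I would set $h=g\circ\varphi_w\in\mathrm{Hol}(\mathbb{B}_d)$ and $\zeta=\varphi_w(z)$. Since $\varphi_w$ is an involution of $\mathbb{B}_d$ with $\varphi_w(w)=0$, this gives $g(w)=h(0)$, $g(z)=h(\zeta)$ and $|\zeta|=\rho(z,w)$; the hypothesis $\beta(z,w)<1/2$ means $|\zeta|<\tanh(1/2)=:r_0$, and note $r_0<r_1:=\tanh 1<1$. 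From Lemma \ref{basic about varphi}(1)--(2) one checks that $1-\rho(a,b)^2=(1-|a|^2)(1-|b|^2)/|1-\langle b,a\rangle|^2$, hence that $\rho$ (and so $\beta$) is $\varphi_w$-invariant, so $\varphi_w$ carries $D(w)=D(w,1)$ onto $D(0,1)=\{|\xi|<r_1\}$. The change of variables $\eta=\varphi_w(\xi)$ together with Lemma \ref{basic about varphi}(3) then turns $\int_{D(w)}|g|^2\,dv$ into $\int_{|\xi|<r_1}|h(\xi)|^2\big((1-|w|^2)/|1-\langle\xi,w\rangle|^2\big)^{d+1}\,dv(\xi)$, which is at least $4^{-(d+1)}(1-|w|^2)^{d+1}\int_{|\xi|<r_1}|h|^2\,dv$ because $|1-\langle\xi,w\rangle|<1+r_1<2$ on $\{|\xi|<r_1\}$. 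Thus it remains to prove the model estimate $|h(\zeta)-h(0)|^2\le C_d\,|\zeta|^2\int_{|\xi|<r_1}|h|^2\,dv$ for $|\zeta|<r_0$, with $C_d$ depending only on $d$; combining the two displays then yields the lemma with $C=4^{d+1}C_d$.

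For the model estimate I would write $h(\zeta)-h(0)=\int_0^1\nabla h(t\zeta)\cdot\zeta\,dt$, so that $|h(\zeta)-h(0)|\le|\zeta|\sup_{|\xi|\le r_0}|\nabla h(\xi)|$. It then suffices to bound $\sup_{|\xi|\le r_0}|\nabla h(\xi)|^2$ by a constant depending only on $d$ times $\int_{|\eta|<r_1}|h|^2\,dv$, which is a standard interior estimate: each $\partial_jh$ is holomorphic, so for $\delta=(r_1-r_0)/3$ and $|\xi|\le r_0$ the one-variable Cauchy inequality gives $|\partial_jh(\xi)|^2\lesssim\delta^{-2}\sup_{B(\xi,\delta)}|h|^2$, and the sub-mean-value property gives $\sup_{B(\xi,\delta)}|h|^2\lesssim\delta^{-2d}\int_{B(\xi,2\delta)}|h|^2\le\delta^{-2d}\int_{|\eta|<r_1}|h|^2$ since $B(\xi,2\delta)\subset\{|\eta|<r_1\}$. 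Summing over $j$ and inserting into the line-integral bound proves the model estimate. (Equivalently, $|h(\zeta)-h(0)|=|\langle h,\,k_\zeta-k_0\rangle|$ for the Bergman kernel $k$ of the ball of radius $r_1$, and $\|k_\zeta-k_0\|\le C_d|\zeta|$ for $|\zeta|\le r_0$ follows from a direct computation with the explicit kernel.)

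The whole argument is routine, and I do not expect a genuine obstacle. The one point that really matters is the factor $|\zeta|^2=\rho(z,w)^2$ on the right-hand side, rather than a mere constant: it is produced precisely by the line-integral (or reproducing-kernel-difference) representation of $h(\zeta)-h(0)$, and it is exactly what makes the iteration $\rho((M_2M_1)^kz,M_3z)\to0$ in the surrounding proof force the differences to be small. The only things to be careful about are that $\beta$ really is $\varphi_w$-invariant (so $D(w)$ transforms onto a fixed Euclidean ball) and that the radii are kept in the range $r_0<r_1<1$ so that all the interior estimates take place on a fixed compact subset of $\mathbb{B}_d$; since $\tanh(1/2)<\tanh 1<1$ there is ample room for that.
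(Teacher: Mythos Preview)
Your proposal is correct and follows essentially the same approach as the paper: reduce to a model inequality at the origin via the involution $\varphi_w$ (using the Jacobian from Lemma~\ref{basic about varphi}(3) and the $\varphi_w$-invariance of $\beta$), then establish the model bound $|h(\zeta)-h(0)|^2\le C|\zeta|^2\int_{D(0)}|h|^2\,dv$. The paper simply invokes a ``reproducing kernel argument'' for the model step, whereas you spell it out via a line-integral gradient bound plus Cauchy/sub-mean-value interior estimates (and note the reproducing-kernel variant as an alternative); the two are equivalent and the rest of your argument matches the paper's change of variables line by line.
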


\begin{proof}
Using a reproducing kernel argument, it is easy to show that for $g\in Hol(\mathbb{B}_d)$ and $|\lambda|\in D(0,1/2)$,
$$
|g(\lambda)-g(0)|^2\leq C|\lambda|^2\int_{D(0)}|g(\eta)|^2dv(\eta).
$$
So if $\beta(z,w)<1/2$,
\begin{eqnarray*}
|g(z)-g(w)|^2&=&|g\varphi_w(\varphi_w(z))-g\varphi_w(0)|^2\\
&\leq&C|\varphi_w(z)|^2\int_{D(0)}|g\varphi_w(\eta)|^2dv(\eta)\\
&=&C\rho(z,w)^2\int_{D(w)}|g(\lambda)|^2\frac{(1-|w|^2)^{d+1}}{|1-\langle \lambda,w\rangle|^{2(d+1)}}dv(\lambda)\\
&\leq&C\frac{\rho(z,w)^2}{(1-|w|^2)^{d+1}}\int_{D(w)}|g(\lambda)|^2dv(\lambda)
\end{eqnarray*}
This completes the proof of lemma.
\end{proof}

From the Lemma and previous argument,
\begin{eqnarray*}
&&|f((M_2M_1)^kz)|\\
&=&|f((M_2M_1)^kz)-f(M_3z)|\\
&\leq&Cr_{\epsilon}\frac{1}{(1-|M_3z|^2)^{d+1}}\int_{D(M_3z)}|f(\eta)|^2dv_{M_2}(\eta)(1-|z|^2)^{n-d}dv_{M_2}(z).
\end{eqnarray*}
Therefore
\begin{eqnarray*}
&&\|(Q_2Q_1)^kf\|^2=\int_{M_2}|(Q_2Q_1)^kf(z)|^2(1-|z|^2)^{n-d}dv_{M_2}(z)\\
&=&\int_{M_2}|f((M_2M_1)^kz)|^2(1-|z|^2)^{n-d}dv_{M_2}(z)\\
&\leq&Cr_{\epsilon}^2\int_{M_2}\frac{1}{(1-|M_3z|^2)^{d+1}}\int_{D(M_3z)}|f(\eta)|^2dv_{M_2}(\eta)(1-|z|^2)^{n-d}dv_{M_2}(z)\\
&=&Cr_{\epsilon}^2\int_{M_2}|f(\eta)|^2\int_{\{z\in M_2: M_3z\in D(\eta)\}}\frac{(1-|z|^2)^{n-d}}{(1-|M_3z|^2)^{d+1}}dv_{M_2}(z)dv_{M_2}(\eta).
\end{eqnarray*}
If $ M_3z\in D(\eta)$, then $\beta(M_3z,M_3\eta)\leq\beta(M_3z,\eta)<1$ and therefore
$$
\beta(M_3\eta,\eta)\leq\beta(M_3z,M_3\eta)+\beta(M_3z,\eta)<2.
$$
Thus $1-|M_3z|^2\approx 1-|M_3\eta|^2\approx 1-|\eta|^2$. We claim that
$$
\int_{\{z\in M_2: M_3z\in D(\eta)\}}(1-|z|^2)^{n-d}dv_{M_2}(z)\leq C(1-|\eta|^2)^{n+1}.
$$
For $z\in M_2$, write temporarily $z=(z',z'')$ where $z'$ corresponds to the coordinates in $M_3$.
\begin{eqnarray*}
&&\int_{\{z\in M_2: M_3z\in D(\eta)\}}(1-|z|^2)^{n-d}dv_{M_2}(z)\\
&=&C\int_{z'\in D(\eta)}(1-|z'|^2)^{n-d+d-d_3}\int_{\lambda\in\mathbb{B}_{d-d_3}}(1-|\lambda|^2)^{n-d}dv(\lambda)dv(z')\\
&\leq&C(1-|\eta|^2)^{n-d_3}(1-|\eta|^2)^{d_3+1}\\
&=&C(1-|\eta|^2)^{n+1}.
\end{eqnarray*}
This proves the claim.

Therefore
$$
\|(Q_2Q_1)^kf\|^2\leq Cr_{\epsilon}^2\|f\|^2.
$$
Take $\epsilon>0$ such that $Cr_{\epsilon}^2<1$ in the beginning, then our proof is complete.
\end{proof}
\end{exam}
\begin{rem}\label{inverse of exam 1}
From Lemma \ref{continuity of function}, it is easy to see that
$$
\|k_z-k_w\|\leq C\rho(z,w)
$$
when $\rho(z,w)$ is small. This tells us that the inverse of Example \ref{example hyperplane} is also true: if the angle between $M_1$ and $M_2$ is small, then so is the angle between $Q_1$ and $Q_2$. Take $z_1\in M_1$, $z_2\in M_2$ such that $z_i\perp M_3$, then $Q_3k_{z_i}=0, i=1,2$ and $\|k_{z_1}-k_{z_2}\|\leq C\rho(z_1,z_2)$. When the angle of $M_1$ and $M_2$ is small we can take such $z_i$ so that $\rho(z_1,z_2)$ is small. Therefore by Lemma \ref{angle}, the angle between $Q_1$ and $Q_2$ is small.
\end{rem}

\begin{exam}\label{counter example}
Suppose $\tilde{M}_1$ and $\tilde{M}_2$ are two affine spaces, $\emptyset\neq \tilde{M}_1\cap \tilde{M}_2\cap\overline{\mathbb{B}_n}\subseteq\partial\mathbb{B}_n$. Let $M_i=\tilde{M}_i\cap\bn$, $Q_i=\overline{span}\{K_{\lambda}:\lambda\in M_i\}$. Then $Q_1\cap Q_2=\{0\}$ and $Q_1+Q_2$ is not closed.
\end{exam}

\begin{proof}
Since $Q_1\cap Q_2$ is the orthogonal space of a polynomial ideal with generators of degree one, and has no zero points inside $\bn$ and only one on $\partial\mathbb{B}_n$, $Q_1\cap Q_2=\{0\}$, we have $Q_1\cap Q_2=\{0\}$.

For $z\in\mathbb{B}_n$, it is easy to prove that $Q_{iz}$ is the projection to the space $\overline{span}\{K_{\lambda}:\lambda\in\varphi_z(M_i)\}$. Therefore, without loss of generality, we assume that $M_1$ is a linear subspace.

We claim that $\rho(M_1,M_2)=0$. To prove this, take $z\in M_1\cap M_2\cap\partial\mathbb{B}_n$, then $rz\in M_1$, $\forall 0<r<1$. Change coordinates so that $z=(z_1,0,\ldots,0)$. Since $M_2$ is an affine space that intersects $\mathbb{B}_n$, after possibly changing the order of basis, $M_2$ has expression
$$
w=(w',L(w'))+(z_1,0,\ldots,0), w\in M_2,
$$
where $L$ is a linear function of $w'=(w_1,\ldots,w_d)$, $d=\dim M_2$. Take $w_r=((r-1)z',L((r-1)z'))+z$, then
$$
\varphi_{rz}(w_r)=(\varphi_{rz'}(rz'),\frac{(1-r^2)^{1/2}}{1-r^2}\cdot(r-1)L(z'))=(0,O((1-r^2)^{1/2})).
$$
Therefore $\rho(rz,w_r)\to0, r\to1$. This proves the claim. The rest of the proof is same as in Remark \ref{inverse of exam 1}.
\end{proof}

Next we discuss the more general case: suppose $M_1$ and $M_2$ are two varieties and $M_3=M_1\cap M_2$. Let $Q_i=\overline{span}\{K_{\lambda}:\lambda\in M_i\}$, $1=1,2,3$. When do we know that $Q_1+Q_2$ is closed and $Q_1\cap Q_2/Q_3$ is finite dimensional? This question is important because when it holds, the essential normality of $Q=Q_1+Q_2$ follows from the essential normality of each $Q_1$ and $Q_2$. Moreover, we can obtain the index class of $Q_1+Q_2$ from the index classes of $Q_1$, $Q_2$ and $Q_3$. This would allow us to obtain index results for complicated varieties by decomposing it into nice pieces.

As preparation for our main result, we establish a few lemmas.
\begin{lem}\label{rhoxy}
Suppose $x$, $y\in M_A\backslash \mathbb{B}_n$ and $\rho(x,y)<1$, then there exists a unitary operator $U$ such that for any $S\in\mathcal{T}(L^{\infty})$,
$$
S_y=U^*S_xU.
$$
\end{lem}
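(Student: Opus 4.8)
The plan is to realize $y$ as $\varphi_x(w)$ for a genuine point $w\in\bn$, approximate $x$ by a net in $\bn$, and transport Su\'arez's strong-operator continuity of $z\mapsto S_z$ along that net by exploiting that $z\mapsto U_z$ is, up to a cocycle, a unitary representation of $\operatorname{Aut}(\bn)$. First, since $\rho(x,y)<1$, Lemma \ref{extended rho}(1) gives $w\in\bn$ with $|w|=\rho(x,y)$ and $y=\varphi_x(w)$. Fix a net $(z_\alpha)\subseteq\bn$ with $z_\alpha\to x$ in $M_A$. Evaluating at the fixed point $w$ the defining relation $a\circ\varphi_{z_\alpha}\to a\circ\varphi_x$ (uniformly on compacta, for every $a\in A$) shows $\varphi_{z_\alpha}(w)\to\varphi_x(w)=y$ in $M_A$. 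Hence, by the continuous extension of $z\mapsto S_z$ to $M_A$, for every $S\in\toe$ one has $S_{z_\alpha}\to S_x$ and $S_{\varphi_{z_\alpha}(w)}\to S_y$ in the strong operator topology.

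Next I establish a cocycle identity. Recall $U_zf=(f\circ\varphi_z)\,k_z$ with $|k_z|^2$ equal to the Jacobian of $\varphi_z$; more generally, for an automorphism $\psi$ of $\bn$ set $U_\psi f=(f\circ\psi)\,j_\psi$ with $j_\psi$ holomorphic and $|j_\psi|^2$ the Jacobian of $\psi$. Each $U_\psi$ is unitary on $\ber$, and $U_\psi U_{\psi'}=c(\psi,\psi')\,U_{\psi'\circ\psi}$ for some $c(\psi,\psi')\in\mathbb{T}$, since the quotient of the two weight functions is holomorphic, nonvanishing, and of modulus one on $\bn$, hence constant. The automorphism $\varphi_{\varphi_{z_\alpha}(w)}\circ\varphi_{z_\alpha}$ sends $w$ to $0$, so $\varphi_{\varphi_{z_\alpha}(w)}\circ\varphi_{z_\alpha}=V_\alpha\circ\varphi_w$ for a unique $V_\alpha\in U(n)$; combining this with the cocycle relation yields
$$
U_{z_\alpha}\,U_{\varphi_{z_\alpha}(w)}=\mu_\alpha\,U_w\,U_{V_\alpha},\qquad \mu_\alpha\in\mathbb{T},
$$
where $U_{V_\alpha}g=(g\circ V_\alpha)\,j_{V_\alpha}$ is the (constant‑weighted) composition operator of $V_\alpha$. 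The crucial point is that $V_\alpha$ and $\mu_\alpha$ depend only on $z_\alpha$ and $w$, not on $S$.

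Now pass to a subnet along which $V_\alpha\to V$ and $\mu_\alpha\to\mu$ (possible because $U(n)\times\mathbb{T}$ is compact). Since $g\mapsto(g\circ V_\alpha)\,j_{V_\alpha}$ converges strongly to $g\mapsto(g\circ V)\,j_V$ — check on polynomials, then use density and the uniform bound $1$ — the right‑hand side above converges strongly, so $U_{z_\alpha}U_{\varphi_{z_\alpha}(w)}\to R:=\mu\,U_w\,U_V$ strongly, with $R$ unitary and independent of $S$; taking adjoints, $U_{\varphi_{z_\alpha}(w)}U_{z_\alpha}\to R^{*}$ strongly. Because $U_{z_\alpha}^{*}=U_{z_\alpha}$ and $U_{z_\alpha}^{2}=I$, inserting $U_{z_\alpha}^{2}$ on both sides of $S$ gives
$$
S_{\varphi_{z_\alpha}(w)}=U_{\varphi_{z_\alpha}(w)}\,S\,U_{\varphi_{z_\alpha}(w)}=\big(U_{\varphi_{z_\alpha}(w)}U_{z_\alpha}\big)\,S_{z_\alpha}\,\big(U_{z_\alpha}U_{\varphi_{z_\alpha}(w)}\big).
$$
The three factors on the right are uniformly bounded and converge strongly (to $R^{*}$, $S_x$, $R$), so their product converges strongly to $R^{*}S_xR$; comparing with $S_{\varphi_{z_\alpha}(w)}\to S_y$ gives $S_y=R^{*}S_xR$, and we take $U:=R$.

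I expect the main obstacle to be precisely the step just sketched: the operators $U_{\varphi_{z_\alpha}(w)}$ have no limit in any operator topology as $z_\alpha$ escapes to the boundary of $M_A$, so one cannot pass to the limit directly inside $U_{\varphi_{z_\alpha}(w)}\,S\,U_{\varphi_{z_\alpha}(w)}$. The device is the regrouping above, which hides the boundary‑escaping behaviour inside $S_{z_\alpha}$ (whose limit is handed over by Su\'arez's extension lemma) and isolates a residual ambiguity confined to the \emph{compact} group $U(n)\times\mathbb{T}$, removed by a subnet; the single genuinely computational point is the automorphism/cocycle bookkeeping, namely identifying $\varphi_{\varphi_{z_\alpha}(w)}\circ\varphi_{z_\alpha}=V_\alpha\circ\varphi_w$ and tracking the unimodular weights to obtain $U_{z_\alpha}U_{\varphi_{z_\alpha}(w)}=\mu_\alpha U_wU_{V_\alpha}$.
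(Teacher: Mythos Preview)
Your proof is correct and follows essentially the same route as the paper's: both arguments realize $y=\varphi_x(w)$ via Lemma~\ref{extended rho}, approximate by $w_\alpha=\varphi_{z_\alpha}(w)$, use the automorphism decomposition $\varphi_{w_\alpha}\circ\varphi_{z_\alpha}=V_\alpha\circ\varphi_w$ with $V_\alpha\in U(n)$ (the paper writes this as $\varphi_{w_\alpha}=U_\alpha\circ\varphi_\lambda\circ\varphi_{z_\alpha}$), extract a convergent subnet by compactness of $U(n)$, and pass to the limit using Su\'arez's SOT continuity. Your cocycle formalism is a cleaner packaging of the same computation the paper carries out by hand on $U_{w_\alpha}f$; the only point worth making explicit is that the adjoint convergence $U_{\varphi_{z_\alpha}(w)}U_{z_\alpha}\to R^*$ does not follow from ``taking adjoints'' of a strong limit in general, but here it is immediate from the explicit form $R_\alpha^*=\bar\mu_\alpha U_{V_\alpha}^*U_w$ and the strong convergence $U_{V_\alpha}^*\to U_V^*$ on polynomials.
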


\begin{proof}
Suppose $z_{\alpha}\to x$, $\{z_{\alpha}\}\subseteq\bn$. Since $\rho(x,y)<1$, by Lemma \ref{extended rho}, $\exists\lambda\in\mathbb{B}_n$ such that $w_{\alpha}:=\varphi_{z_{\alpha}}(\lambda)\to y$.

\begin{eqnarray*}
U_{w_{\alpha}}f(z)&=&f\circ\varphi_{w_{\alpha}}(z)\frac{(1-|w_{\alpha}|^2)^{(n+1)/2}}{(1-\langle z,w_{\alpha}\rangle)^{n+1}}\\
&=&f\circ U_{\alpha}\circ\varphi_{\lambda}\circ\varphi_{z_{\alpha}}(z)a_{\alpha}\frac{(1-|z_{\alpha}|^2)^{(n+1)/2}}{(1-\langle z,z_{\alpha}\rangle)^{n+1}}\frac{(1-|\lambda|^2)^{(n+1)/2}}{(1-\langle\varphi_{z_{\alpha}}(z),\lambda\rangle)^{n+1}}\\
&=&a_{\alpha}U_{z_{\alpha}}U_{\lambda}(f\circ U_{\alpha})(z).
\end{eqnarray*}
Here
$$
a_{\alpha}=\frac{(1-\langle z_{\alpha},\lambda\rangle)^{n+1}}{|1-\langle z_{\alpha},\lambda\rangle|^{n+1}}
$$
is a number of absolute value $1$ and $U_{\alpha}$ is a unitary operator on $\mathbb{C}^n$ such that $\varphi_{w_{\alpha}}=U_{\alpha}\circ\varphi_{\lambda}\circ\varphi_{z_{\alpha}}$(see the proof of Lemma 6.2 in \cite{Suarez07} for existence of such $U_{\alpha}$).
Now we can take a subnet such that $U_{\alpha}\to U'$. Here $U'$ is a unitary operator on $\mathbb{C}^n$.

Therefore, for $f, g\in\mathbb{C}[z_1,\ldots,z_n]$,
\begin{eqnarray*}
\langle S_{w_{\alpha}}f,g\rangle&=&\langle SU_{w_{\alpha}}f,U_{w_{\alpha}}g\rangle\\
&=&\langle SU_{z_{\alpha}}U_{\lambda}(f\circ U_{\alpha}),U_{z_{\alpha}}U_{\lambda}(g\circ U_{\alpha})\rangle\\
&=&\langle S_{z_{\alpha}}U_{\lambda}(f\circ U_{\alpha}),U_{\lambda}(g\circ U_{\alpha})\rangle\\
&=&\langle S_{z_{\alpha}}U_{\lambda}(f\circ U_{\alpha}-f\circ U'),U_{\lambda}(g\circ U_{\alpha}))\rangle\\
& &+\langle S_{z_{\alpha}}U_{\lambda}(f\circ U'),U_{\lambda}(g\circ U_{\alpha}-g\circ U')\rangle\\
& &+\langle S_{z_{\alpha}}U_{\lambda}(f\circ U'),U_{\lambda}(g\circ U')\rangle.
\end{eqnarray*}
Note that $f\circ U_{\alpha}$ tends to $f\circ U'$ in norm, and that $\|S_{\alpha}\|\leq\|S\|$, by standard argument, the first two terms converge to $0$. Define $Uf=U_{\lambda}(f\circ U')$. Taking limit, we see that for any polynomial $f, g$,
$$
\langle S_yf,g\rangle=\langle S_xUf,Ug\rangle.
$$
Therefore $S_y=U^*S_xU$. This completes the proof.
\end{proof}

\begin{lem}\label{3equivalent}
Suppose $Q_1$, $Q_2$, $Q_3$ are closed linear subspaces of $L_{a}^2(\mathbb{B}_n)$, $Q_3\subseteq Q_1\cap Q_2$, the projection operators $Q_i\in\mathcal{T}(L^{\infty})$. Then the following are equivalent.
\begin{itemize}
\item[(1)] $Q_1+Q_2$ is closed and $Q_1\cap Q_2/Q_3$ is finite dimensional.
\item[(2)] $\|Q_2Q_1Q_2-Q_3\|_e<1$.
\item[(3)] $\exists 0<a<1$ such that $\forall x\in M_A\backslash \mathbb{B}_n$,
$$
\|Q_{2x}Q_{1x}Q_{2x}-Q_{3x}\|<a.
$$
\end{itemize}
\end{lem}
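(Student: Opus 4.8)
The plan is to prove $(2)\Leftrightarrow(3)$ directly from Su\'arez's machinery and to prove $(1)\Leftrightarrow(2)$ by analysing the single positive self-adjoint operator $S:=Q_2Q_1Q_2-Q_3$. For $(2)\Leftrightarrow(3)$ I would observe that each $Q_i$ lies in $\toe$, hence so does $S$, and that by the multiplicativity of the map $T\mapsto T_x$ we have $S_x=Q_{2x}Q_{1x}Q_{2x}-Q_{3x}$ for every $x\in M_A$; Lemma \ref{essential norm} then gives $\|S\|_e=\sup_{x\in M_A\setminus\bn}\|S_x\|$. Thus $(2)$ says precisely that this supremum is $<1$. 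Choosing any $a$ strictly between $\|S\|_e$ and $1$ yields $(3)$, and conversely $(3)$ gives $\|S\|_e=\sup_x\|S_x\|\le a<1$, which is $(2)$. So this equivalence is essentially a restatement of Lemma \ref{essential norm}.

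For $(1)\Leftrightarrow(2)$, set $R:=Q_1\cap Q_2$, so that $Q_3\le R\le Q_2$ as projections. The first step is structural. One checks that $Q_2Q_1Q_2$ maps $Q_2$ into $Q_2$ and acts as the identity on $R$, and conversely that if $v\in Q_2$ and $Q_2Q_1Q_2v=v$ then $\|Q_1v\|^2=\langle Q_1v,v\rangle=\|v\|^2$, forcing $Q_1v=v$, so $v\in R$; hence $R$ is exactly the $1$-eigenspace of $Q_2Q_1Q_2|_{Q_2}$. By self-adjointness $S$ is then block diagonal with respect to $\ber=Q_2^\perp\oplus(Q_2\ominus R)\oplus Q_3\oplus(R\ominus Q_3)$, equal there to $0\oplus T'\oplus 0\oplus I$, where $T':=Q_2Q_1Q_2|_{Q_2\ominus R}$ is a positive contraction having no eigenvalue $1$. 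In particular $0\le S\le I$, $\ker(S-I)=R\ominus Q_3$, and $\|Q_2Q_1Q_2-R\|=\|T'\|$. Separately, the Corollary following Lemma \ref{angle}, together with Lemma \ref{angle}(3) and the identities $Q_1R=RQ_1=R$, $Q_2R=RQ_2=R$ (which give $\|Q_1Q_2Q_1-R\|=\|Q_1Q_2-R\|^2=\|Q_2Q_1Q_2-R\|$), shows that ``$Q_1+Q_2$ is closed'' is equivalent to $\|Q_2Q_1Q_2-R\|<1$, i.e. to $\|T'\|<1$.

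With these facts, $(1)\Rightarrow(2)$ is short: if $(Q_1\cap Q_2)/Q_3$ is finite dimensional then $R-Q_3$ has finite rank, so $\|S\|_e=\|(Q_2Q_1Q_2-R)+(R-Q_3)\|_e=\|Q_2Q_1Q_2-R\|_e\le\|Q_2Q_1Q_2-R\|<1$. For $(2)\Rightarrow(1)$: if $\|S\|_e<1$ then, $S$ being self-adjoint, $1$ cannot be an eigenvalue of infinite multiplicity, so $\ker(S-I)=R\ominus Q_3$ is finite dimensional, which is the finite dimensionality of $(Q_1\cap Q_2)/Q_3$; consequently $R-Q_3$ is finite rank, whence $\|T'\|_e=\|Q_2Q_1Q_2-R\|_e=\|S\|_e<1$, and since $T'$ is a positive operator with no eigenvalue $1$, a norm equal to $1$ would force $1\in\sigma_{ess}(T')$ — so in fact $\|T'\|<1$ and $Q_1+Q_2$ is closed. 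This closes the cycle.

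I expect the main obstacle to be the direction $(2)\Rightarrow(1)$, and within it the point that the hypothesis on the \emph{essential} norm of $Q_2Q_1Q_2-Q_3$ must be leveraged to control not only $\dim\big((Q_1\cap Q_2)/Q_3\big)$ but also the genuine norm $\|Q_2Q_1Q_2-(Q_1\cap Q_2)\|$; this is precisely where the positivity of $Q_2Q_1Q_2$ on $Q_2$ and the absence of eigenvalue $1$ on the orthogonal complement of the intersection are used, together with the elementary spectral fact that a positive operator of norm $1$ with $1$ not an eigenvalue has $1$ in its essential spectrum. The extra generality of allowing $Q_3\subsetneq Q_1\cap Q_2$ should be harmless throughout, since it only introduces the finite-rank perturbation $R-Q_3$, which does not change essential norms.
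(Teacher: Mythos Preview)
Your proposal is correct and follows exactly the approach sketched in the paper: the paper's proof merely says that $(1)\Leftrightarrow(2)$ comes from analysing the spectral decomposition of $Q_2Q_1Q_2-Q_3$ and that $(2)\Leftrightarrow(3)$ is by Lemma~\ref{essential norm}, and you have supplied precisely those details. The block decomposition $S=0\oplus T'\oplus 0\oplus I$, the identification of $R\ominus Q_3$ as the $1$-eigenspace, and the spectral argument that $\|T'\|_e<1$ together with ``$1$ is not an eigenvalue of $T'$'' forces $\|T'\|<1$ are all sound.
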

\begin{proof}
The equivalence of (1) and (2) can be obtained by analysing the spectral decomposition of the operator $Q_2Q_1Q_2-Q_3$. And the equivalence of (3) and (2) is by Lemma \ref{essential norm}.
\end{proof}

\begin{lem}\label{Mx}
Suppose $\tilde{M}$ is an analytic subset of an open neighborhood of $\overline{\mathbb{B}_n}$. $\tilde{M}$ is smooth on $\partial\mathbb{B}_n$ and transversal with $\partial\mathbb{B}_n$. Let $M=\tilde{M}\cap\mathbb{B}_n$ and
$$Q=\overline{span}\{K_{\lambda}|\lambda\in M\}.$$
Then for $x\in M_A\backslash \mathbb{B}_n$.

If $\rho(x,\overline{M}^{_A})=1$, then $Q_x=0$;

If $x\in\overline{M}^{_A}$, then $Q_x=\overline{span}\{K_{\lambda}|\lambda\in M_x\}$, where $M_x=\tilde{M}_x\cap\bn$ and
$$
\tilde{M}_x=\{v\in T\tilde{M}|_{\hat{x}}: v\perp\hat{x}\}+\mathbb{C}\hat{x}.
$$
Where $\hat{x}\in\pbn$ is obtained by evaluating $x$ at each index function $z_i$.

If $\rho(x,\overline{M}^{_A})<1$, then $\exists y\in\overline{M}^{_A}$ such that $Q_x$ is unitary equivalent to  $Q_y$.

Here $\overline{M}^{_A}$ denotes the closure of $M$ in $M_A$.
\end{lem}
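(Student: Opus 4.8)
\emph{Proof plan.} The plan is to analyse the localized operators $Q_z=U_zQU_z$, $z\in\bn$, and pass to the limit along a net converging to $x$. By Theorem~\ref{one variety}, $Q\in\toe$ and the measure $\mu=c\,(1-|w|^2)^{n-d}dv_M$, $d=\dim M$, is an equivalent measure for $Q$; it is in fact a Carleson measure and satisfies $\|Qh\|^2\le C\int_M|h|^2\,d\mu$ for every $h\in\ber$ (cf.\ \cite{our paper}), a bound I will use throughout. I will also use that for $z\in\bn$ the operator $Q_z$ is the projection onto $\overline{span}\{K_\lambda:\lambda\in\varphi_z(M)\}$ (as in Example~\ref{counter example}) and that $Q_{z_\alpha}\to Q_x$ strongly whenever $z_\alpha\to x$ in $M_A$. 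For bounded holomorphic $g$ and any net $z_\alpha\to x$ in $\bn$ this gives $\|Q_{z_\alpha}g\|^2=\|QU_{z_\alpha}g\|^2\le C\int_M|g(\varphi_{z_\alpha}(w))|^2\,|k_{z_\alpha}(w)|^2\,d\mu(w)$, which I would control by splitting $M$ into $M\cap D(z_\alpha,R)$ and its complement: on the complement $1-|\varphi_{z_\alpha}(w)|^2\le 1-s_R^2$ by Lemma~\ref{basic about varphi}(2), so extracting one such factor from $|k_{z_\alpha}(w)|^2 d\mu(w)$ and bounding the rest by the Carleson-type estimate $\int_M(1-|w|^2)^{n-d-1}|1-\langle w,z_\alpha\rangle|^{-2n}dv_M\le C(1-|z_\alpha|^2)^{-n}$ — valid since $\tilde M$ is transversal and $d<n$, reducing via charts to Rudin's integral estimates on $\mathbb{B}_d$ (\cite{Rudin}) — gives a far-part contribution $\le C\|g\|_\infty^2(1-s_R^2)$, uniformly in $\alpha$. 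When $\rho(x,\overline M^{A})=1$, the definition of $\rho$ on $M_A$ (cf.\ \cite{Suarez04}) forces $\rho(z_\alpha,M)\to1$ for every such net, so $M\cap D(z_\alpha,R)=\emptyset$ for $\alpha$ large; the near part disappears, $\limsup_\alpha\|Q_{z_\alpha}g\|^2\le C\|g\|_\infty^2(1-s_R^2)$ for all $R>0$, and letting $R\to\infty$ (polynomials being dense) yields $Q_x=0$.

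\emph{Case $x\in\overline M^{A}$.} Here I would choose a net $m_\alpha\in M$ with $m_\alpha\to x$, so that $m_\alpha\to\hat x\in\tilde M\cap\pbn$. The geometric core is that $\varphi_{m_\alpha}(M)$ converges, in Hausdorff distance on compact subsets of $\bn$, to $M_x=\tilde M_x\cap\bn$. To prove this I would rescale: with $\delta_\alpha=1-|m_\alpha|^2$, a short computation (using $\langle u,m_\alpha\rangle m_\alpha=|m_\alpha|^2P_{m_\alpha}u$) shows that $\delta_\alpha^{-1/2}(\varphi_{m_\alpha}(u)-m_\alpha)$ tends, uniformly for $u$ in compact subsets of $\bn$, to $(u-\langle u,\hat x\rangle\hat x)/(\langle u,\hat x\rangle-1)$; since $\tilde M$ is smooth at $\hat x$, the rescaled sets $\delta_\alpha^{-1/2}(\tilde M-m_\alpha)$ converge locally to $T\tilde M|_{\hat x}$; and since $\varphi_{m_\alpha}(u)\in M$ iff $\delta_\alpha^{-1/2}(\varphi_{m_\alpha}(u)-m_\alpha)\in\delta_\alpha^{-1/2}(\tilde M-m_\alpha)$, the limit set is $\{u\in\bn:u-\langle u,\hat x\rangle\hat x\in T\tilde M|_{\hat x}\}$, which unwinds to $\tilde M_x\cap\bn=M_x$. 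Granting this I would conclude in two steps. First, for $\lambda\in M_x$ pick $\lambda_\alpha\in\varphi_{m_\alpha}(M)$ with $\lambda_\alpha\to\lambda$; then $K_{\lambda_\alpha}\in\mathrm{ran}\,Q_{m_\alpha}$ and $K_{\lambda_\alpha}\to K_\lambda$, and since the $Q_{m_\alpha}$ are projections converging strongly to $Q_x$ this forces $K_\lambda\in\mathrm{ran}\,Q_x$, so $\overline{span}\{K_\lambda:\lambda\in M_x\}\subseteq\mathrm{ran}\,Q_x$. Second, for bounded holomorphic $g$ with $g|_{M_x}=0$ I would run the splitting above with $z_\alpha=m_\alpha$: on $M\cap D(m_\alpha,R)$ one has $\varphi_{m_\alpha}(w)\in\varphi_{m_\alpha}(M)\cap D(0,R)$, so the Hausdorff convergence together with $g|_{M_x}=0$ forces $\sup_{\varphi_{m_\alpha}(M)\cap D(0,R)}|g|\to0$ as $\alpha\to\infty$ while the total mass of $|k_{m_\alpha}|^2d\mu$ stays bounded ($\mu$ Carleson); combined with the far-part bound this gives $\limsup_\alpha\|Q_{m_\alpha}g\|^2\le C\|g\|_\infty^2(1-s_R^2)$, whence $Q_xg=0$. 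Since $\tilde M_x$ is a cone, dilating then shows $Q_xg=0$ for all $g\perp\overline{span}\{K_\lambda:\lambda\in M_x\}$, i.e.\ $\mathrm{ran}\,Q_x\subseteq\overline{span}\{K_\lambda:\lambda\in M_x\}$. With the first step this gives $Q_x=\overline{span}\{K_\lambda:\lambda\in M_x\}$.

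\emph{Case $\rho(x,\overline M^{A})<1$.} Since a point of $\bn$ is at extended distance $1$ from any point of $M_A\setminus\bn$ (cf.\ \cite{Suarez04}), the infimum defining $\rho(x,\overline M^{A})$ being $<1$ produces some $y\in\overline M^{A}\setminus\bn$ with $\rho(x,y)<1$; then Lemma~\ref{rhoxy} applied to $S=Q$ gives a unitary $U$ with $Q_x=UQ_yU^*$, and by the previous case $Q_y=\overline{span}\{K_\lambda:\lambda\in M_y\}$, so $Q_x$ is unitarily equivalent to it. The step I expect to be the main obstacle is the reverse inclusion $\mathrm{ran}\,Q_x\subseteq\overline{span}\{K_\lambda:\lambda\in M_x\}$ in the case $x\in\overline M^{A}$: the varieties $\varphi_{m_\alpha}(M)$ converge to $M_x$ only on compact subsets of $\bn$, whereas $\mu$ lives up to $\pbn$, so one must argue separately that the ``boundary collar'' of $M$ contributes negligibly — which is exactly what the uniform Carleson estimate of the first paragraph delivers, and, in its degenerate form, it is the entire content of the case $\rho(x,\overline M^{A})=1$.
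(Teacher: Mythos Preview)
Your argument is essentially correct and reaches the same geometric conclusion as the paper, but it takes a genuinely different route. The paper writes $Q=f(T_\mu)$ via functional calculus (using that $0$ is isolated in $\sigma(T_\mu)$), so $Q_x=f((T_\mu)_x)$; it then identifies $(T_\mu)_x=T_{\mu_x}$ by extracting a weak-$*$ limit $\mu_x$ of the pushed-forward measures $\mu_{z_\alpha}$, and proves in one stroke that $\mathrm{ran}\,Q_x=\overline{\mathrm{span}}\{K_\lambda:\lambda\in\mathrm{supp}\,\mu_x\}$ by computing the ball-masses $\mu_{z_\alpha}(D(w,r))$. Your approach instead treats $Q_z$ directly as the projection onto $\overline{\mathrm{span}}\{K_\lambda:\lambda\in\varphi_z(M)\}$ and proves the two inclusions for $\mathrm{ran}\,Q_x$ separately: one via SOT convergence on reproducing kernels, the other via a near/far splitting of $\int_M|U_{z_\alpha}g|^2\,d\mu$ together with Hausdorff convergence $\varphi_{m_\alpha}(M)\to M_x$ on compacta and the dilation density argument. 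What this buys you is that you never invoke the functional calculus or the isolated-spectrum fact for $T_\mu$; the price is the explicit Carleson-type tail estimate and the separate density step. The geometric core---your rescaling $\delta_\alpha^{-1/2}(\varphi_{m_\alpha}(u)-m_\alpha)\to (u-\langle u,\hat x\rangle\hat x)/(\langle u,\hat x\rangle-1)$---is exactly the coordinate computation the paper does in adapted charts, just packaged differently.

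Two small points. First, in the case $\rho(x,\overline M^{A})=1$ you assert that $\rho(z_\alpha,M)\to1$ for \emph{every} net $z_\alpha\to x$; this is not obvious from the definition of the extended $\rho$, and the paper only claims (and only needs) the existence of \emph{one} such net for each $r<1$. Since $Q_x$ is independent of the net, one net already suffices for your argument---just compute $\|Q_xg\|$ along that net. Second, your tail estimate $\int_M(1-|w|^2)^{n-d-1}|1-\langle w,z_\alpha\rangle|^{-2n}\,dv_M\le C(1-|z_\alpha|^2)^{-n}$ is correct under the transversality hypothesis (and $d<n$), but does require passing through the local charts of \cite{our paper} before invoking Rudin's integral bounds on $\mathbb{B}_d$; this is not a gap, just a place where the paper's measure-theoretic route sidesteps an explicit estimate you have to make.
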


\begin{proof}
By the proof in \cite{our paper}, there exists an ``equivalent measure'' $\mu$ on $M$ such that $0$ is isolated in $\sigma(T_{\mu})$ and $Q=Ran T_{\mu}$. In other words, if we take a continuous function $f$ on $\mathbb{R}$ such that $f(0)=0$ and $f$ takes value $1$ on $\sigma(T_{\mu}\backslash\{0\})$, then the projection operator  $Q=f(T_{\mu})$. Therefore $Q_x=f((T_{\mu})_x)$. Suppose $z_{\alpha}\to x$, $z_{\alpha}\in\mathbb{B}_n$. The operators $(T_{\mu})_{z_{\alpha}}$ tend to $(T_{\mu})_x$ in the strong operator topology. Since
$$
(T_{\mu})_{z_{\alpha}}=T_{\mu_{z_{\alpha}}},
$$
where the positive measure $\mu_{z_{\alpha}}$ is defined by
$$
\int gd\mu_{z_{\alpha}}=\int g\circ\varphi_{z_{\alpha}}|k_{z_{\alpha}}|^2d\mu.
$$
From the definition,
$$
\|\mu_{z_{\alpha}}\|=\int d\mu_{z_{\alpha}}=\int |k_{z_{\alpha}}|^2d\mu\leq C\|k_{z_{\alpha}}\|=C
$$
since $\mu$ is a Carleson measure. Therefore
$$
\|\mu_{z_{\alpha}}\|\leq C.
$$
Therefore the net $\{\mu_{z_{\alpha}}\}$ has a subnet that converges to some measure $\mu_x$ in the $weak^*$ topology. Then
$$
\langle(T_{\mu})_xg,h\rangle=\int g\bar{h}d\mu_x, \forall g,h\in \mathbb{C}[z_1,\ldots,z_n].
$$
So $\mu_x$ is a Carleson measure and $(T_{\mu})_x=T_{\mu_x}$.

From our construction, we see that $Q_x$ is the projection operator onto $Range T_{\mu_x}=\overline{span}\{K_{\lambda}|\lambda\in supp\mu_x\}$.

Next we discuss about $supp\mu_x$. We claim that
\begin{eqnarray*}
supp\mu_x&=&M_x:=\{w\in\mathbb{B}_n| \rho(\varphi_{z_{\alpha}}(w), M)\to0\}\\
&=&\{w\in\mathbb{B}_n| \rho(w,\varphi_{z_{\alpha}}(M))\to0\}.
\end{eqnarray*}
For any $w\in\mathbb{B}_n$ and $0<r<1$,
\begin{eqnarray*}
\mu_{z_{\alpha}}(D(w,r))&=&\int\chi_{D(w,r)}d\mu_{z_{\alpha}}\\
&=&\int\chi_{D(\varphi_{z_{\alpha}}(w),r)}|k_{z_{\alpha}}|^2d\mu\\
&\approx&\mu(D(\varphi_{z_{\alpha}}(w),r))\frac{|1-\langle w,z_{\alpha}\rangle|^{2(n+1)}}{(1-|z_{\alpha}|^2)^{n+1}},
\end{eqnarray*}
where the last inequality is because for $\lambda\in D(\varphi_{z_{\alpha}}(w),r)$,
\begin{eqnarray*}
|k_{z_{\alpha}}(\lambda)|^2&=&\frac{(1-|z_{\alpha}|^2)^{n+1}}{|1-\langle\lambda,z_{\alpha}\rangle|^{2(n+1)}}\\
&\approx&\frac{(1-|z_{\alpha}|^2)^{n+1}}{|1-\langle\varphi_{z_{\alpha}}(w),z_{\alpha}\rangle|^{2(n+1)}}\\
&=&\frac{|1-\langle w,z_{\alpha}\rangle|^{2(n+1)}}{(1-|z_{\alpha}|^2)^{n+1}}.
\end{eqnarray*}
Then if $\rho(\varphi_{z_{\alpha}}(w),M)\to0$, for any $0<r<1$, we have $$\mu(D(\varphi_{z_{\alpha}}(w),r))\approx(1-|\varphi_{z_{\alpha}}(w)|^2)^{n+1}.$$ So
\begin{eqnarray*}
\mu_{z_{\alpha}}(D(w,r))&\approx&(1-|\varphi_{z_{\alpha}}(w)|^2)^{n+1}\frac{|1-\langle w,z_{\alpha}\rangle|^{2(n+1)}}{(1-|z_{\alpha}|^2)^{n+1}}\\
&=&(1-|w|^2)^{n+1}.
\end{eqnarray*}
Therefore $\mu_x(D(w,r))>0$ for any $0<r<1$, i.e., $w\in supp\mu_x$.

On the other hand, if $\rho(\varphi_{z_{\alpha}}(w), M)\nrightarrow0$, by taking a subnet we can assume that $\rho(\varphi_{z_{\alpha}}(w),M)>\epsilon>0$. Take $r<\epsilon$ in the proof and it is easy to see that $\mu_x(D(w,r))=0$. Therefore $w$ is not in the support. This completes the proof of our claim.

Now we study the set $M_x$.

First, suppose $\rho(x,\overline{M}^{_A})=1$. By definition, this means that for any $0<r<1$, there exists a net $z_{\alpha}\to x$, such that $\rho(\{z_{\alpha}\}, M)>r$. Therefore for any $0<r'<1$, choose $r>r'$, then from the proof above it is easy to see that $\mu_x(D(0,r'))=0$, which implies $\mu_x=0$. Therefore $Q_x=0$.

Second, the case $\rho(x,\overline{M}^{_A})<1$ is by Lemma \ref{rhoxy}.

Finally, when $x\in\overline{M}^{_A}$, suppose $\tilde{M}$ has local expression $$
w=(w_1,\ldots,w_d,F_{d+1}(w'),\ldots,F_n(w')), w\in\tilde{M}\cap B(z,\delta),
$$
where $w'=(w_1,\ldots,w_d)$. Note that we are using the same kind of expression as in \cite{our paper}, where the basis and functions change continuous with $z$. And the point $z$ always has expression $(z_1,0,\ldots,0)$.

Suppose $w\in M_x$ and suppose $z_{\alpha}\in M$, $z_{\alpha}\to x$. Then by definition $\exists\lambda_{\alpha}\in M$ such that $$\rho(w,\varphi_{z_{\alpha}}(\lambda_{\alpha}))\to0.$$
which is equivalent to $|w-\varphi_{z_{\alpha}}(\lambda_{\alpha})|\to0$.
Take any $\epsilon>0$ such that $|w|+\epsilon<1$, then for some subnet we have $\rho(\lambda_{\alpha},z_{\alpha})=|\varphi_{z_{\alpha}}(\lambda_{\alpha})|<|w|+\epsilon<1$. Therefore $$1-\langle\lambda_{\alpha},z_{\alpha}\rangle\approx1-|z_{\alpha}|^2.$$
Since $\lambda_{\alpha}=(\lambda_{\alpha}',F_{\alpha}(\lambda_{\alpha}'))$
under the basis determined by $z_{\alpha}$.
$$
\varphi_{z_{\alpha}}(\lambda_{\alpha})=(\eta_{\alpha,1},\eta_{\alpha,2},\ldots,\eta_{\alpha,d},\ldots,\eta_{\alpha,n})
$$
where
$$
\eta_{\alpha,1}=\frac{z_{\alpha,1}-\lambda_{\alpha,1}}{1-\langle\lambda_{\alpha},z_{\alpha}\rangle},~~\eta_{\alpha,i}=-\frac{(1-|z_{\alpha}|^2)^{1/2}}{1-\langle\lambda_{\alpha},z_{\alpha}\rangle}\lambda_{\alpha,i},~~i=2,\ldots,d.
$$
and
$$
\eta_{\alpha,i}=-\frac{(1-|z_{\alpha}|^2)^{1/2}}{1-\langle\lambda_{\alpha},z_{\alpha}\rangle}F_{\alpha,i}(\lambda_{\alpha}'),~~i=d+1,\ldots,n.
$$
For simplicity we omit the subscript $\alpha$.

Now $$F_i(\lambda')=L_i(\lambda')+O(|z-\lambda|^2)=L_i(\lambda')+O(1-|z|^2).$$
Here $L_i$ is the linear part of $F_i$:
$$
L_i(\lambda')=\sum_{j=1}^dA_j(\lambda_j-z_j)=A_1(\lambda_1-z_1)+\sum_{j=2}^dA_j\lambda_j.
$$
Then
$$
\eta_i=(1-|z|^2)^{1/2}A_1\eta_1+\sum_{j=2}^dA_j\eta_j+O((1-|z|^2)^{1/2}),~~~j=d+1,\ldots,n.
$$
Since $\eta\to w$ as $z\to x$ and the coefficients $A_i$ converges to the corresponding value at $\hat{x}$. We see that
$$
w_i=\sum_{j=2}^dA_jw_j, i=d+1,\ldots,n.
$$
Also if $w$ is of this form, the argument above also implies that $w\in M_x$.

To write more explicitly, $M_x=\tilde{M}_x\cap\bn$ and
$$
\tilde{M}_x=\{v\in T\tilde{M}|_{\hat{x}}: v\perp\hat{x}\}+\mathbb{C}\hat{x}.
$$
\end{proof}

Now suppose $M_1$ and $M_2$ are as in Lemma \ref{Mx}. Let $M_3=M_1\cap M_2$ and let $Q_i=\overline{span}\{K_{\lambda}|\lambda\in M_i\}$. We want to find a suitable condition to ensure that $Q_1+Q_2$ is closed and $Q_1\cap Q_2/Q_3$ is finite dimensional. An equivalent condition is that $\|Q_2Q_1Q_2-Q_3\|_e<1$. From Theorem \ref{one variety}, the projections $Q_1$ and $Q_2$ are already in $\mathcal{T}(L^{\infty})$. Assume $M_3$ is also as in Lemma \ref{Mx}, then by Lemma \ref{3equivalent}, we only need to look at the operators $Q_{2x}Q_{1x}Q_{2x}-Q_{3x}$, $x\in M_A\backslash\mathbb{B}_n$. From Lemma \ref{Mx} we know $Q_{1x}$, $Q_{2x}$ and $Q_{3x}$ are projections to quotient modules corresponding to linear varieties $M_{ix}$. We list the cases that are possible:
\begin{itemize}
\item[(1)]$\rho(x,\overline{M_1}^{_A})<1$, $\rho(x, \overline{M_2}^{_A})<1$. In this case, there are two possibilities for $M_3$:
    \begin{itemize}
\item[(1-a)] $\rho(x,\overline{M_3}^{_A})<1$ or
\item[(1-b)]$\rho(x,\overline{M_3}^{_A})=1$.
\end{itemize}
\item[(2)]$\rho(x,\overline{M_1}^{_A})=1$, $\rho(x,\overline{M_2}^{_A})<1$, then $\rho(x,\overline{M_3}^{_A})=1$.
\item[(3)]$\rho(x,\overline{M_1}^{_A})<1$, $\rho(x,\overline{M_2}^{_A})=1$, then $\rho(x,\overline{M_3}^{_A})=1$.
\item[(4)]$\rho(x,\overline{M_1}^{_A})=1$, $\rho(x,\overline{M_2}^{_A})=1$, then $\rho(x,\overline{M_3}^{_A})=1$.
\end{itemize}

The case (1-b) corresponds to that of Example \ref{counter example}, which we want to avoid.

Under the cases (2)(3)(4), the operator $Q_{2x}Q_{1x}Q_{2x}-Q_{3x}=0$. In the case (1-a), we can assume $x\in\overline{M_3}^{_A}$, then it becomes Example \ref{example hyperplane}. The only thing that matters is the angle between the two linear subspaces $M_{1x}$ and $M_{2x}$ which are defined in Lemma \ref{Mx}.

To summarize, we need the following conditions:
\begin{itemize}
\item[(1)] $M_1$ and $M_2$ are transverse with $\pbn$, smooth on $\partial\mathbb{B}_n$.
\item[(2)] $M_3$ is also transverse with $\pbn$, smooth on $\partial\mathbb{B}_n$.
\item[(3)] If $x\in M_A\backslash \mathbb{B}_n$ and $\rho(x, \overline{M_1}^{_A})<1$, $\rho(x,\overline{M_2}^{_A})<1$, then $\rho(x,\overline{M_3}^{_A})<1$.
\item[(4)] $\forall x\in\overline{M_3}^{_A}\backslash\mathbb{B}_n$, $M_{1x}\cap M_{2x}=M_{3x}$.
\item[(5)] For all $x\in\overline{M_3}^{_A}\backslash\mathbb{B}_n$, the angles between $M_{1x}$ and $M_{2x}$ have a lower bound.
\end{itemize}

Next we seek properties that would ensure condition (1)-(5). Let us begin with a definition.
\begin{defn}
Let $K$ and $L$ be embedded submanifolds of a manifold $M$ and suppose that their intersection $K\cap L$ is also an embedded submanifold of $M$. Then $K\cap L$ are said to have \emph{clean intersection} if for each $p\in K\cap L$ we have
$$
T_p(K\cap L)= T_pK\cap T_pL.
$$
\end{defn}

Condition (1) and (2) must be assumed so that each of the three quotient modules alone are essentially normal. Assume further that $M_1$ and $M_2$ intersect cleanly at each point of intersection in $\partial\mathbb{B}_n$, then it is not hard to verify (1)(2)(4)(5). Condition (4) follows from the definition of clean intersection and the expression we obtained for $M_x$ in Lemma \ref{Mx}. The fact that $M_i$ is smooth on $\pbn$ tells us that the projection operator to $M_{ix}$ depend continuously on $x$, $i=1,2,3$. Therefore condition (5) follows from Lemma \ref{angle} and compactness. The fact that the assumptions above also ensure condition (3) takes some effort to prove. Assuming this, we have reached our main theorem of this paper.

\begin{thm}\label{mainthm}
Suppose $\tilde{M_1}$ and $\tilde{M_2}$ are two analytic subsets of an open neighborhood of $\clb$. Let $\tilde{M_3}=\tilde{M_1}\cap\tilde{M_2}$. Assume that
\begin{itemize}
\item[(i)]$\tilde{M_1}$ and $\tilde{M_2}$ intersect transversely with $\pbn$ and have no singular points on $\pbn$.
\item[(ii)]$\tilde{M_3}$ also intersects transversely with $\pbn$ and has no singular points on $\pbn$.
\item[(iii)]$\tilde{M_1}$ and $\tilde{M_2}$ intersect cleanly on $\partial\mathbb{B}_n$.
\end{itemize}
Let $M_i=\tilde{M_i}\cap\mathbb{B}_n$ and $Q_i=\overline{span}\{K_{\lambda}: \lambda\in M_i\}$, $i=1,2,3.$ Then $Q_1\cap Q_2/Q_3$ is finite dimensional and $Q_1+Q_2$ is closed. As a consequence, $Q_1+Q_2$ is $p$-essentially normal for $p>2d$, where $d=\max\{\dim M_1, \dim M_2\}$.
\end{thm}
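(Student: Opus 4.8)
By Theorem \ref{one variety}, hypotheses (i) and (ii) already guarantee that $Q_1,Q_2,Q_3\in\toe$ and that $Q_i$ is $p$-essentially normal for $p>2\dim M_i$; moreover $Q_3\subseteq Q_1\cap Q_2$ since $M_3=M_1\cap M_2$. By Lemma \ref{3equivalent} the whole theorem reduces to the single estimate
$$
\sup_{x\in M_A\backslash\bn}\bigl\|Q_{2x}Q_{1x}Q_{2x}-Q_{3x}\bigr\|<1 .
$$
To attack this I would fix $x\in M_A\backslash\bn$ and use Lemma \ref{Mx} on each $\tilde{M_i}$ to identify $Q_{1x},Q_{2x},Q_{3x}$: each is either $0$ (when $\rho(x,\overline{M_i}^{_A})=1$), or --- possibly after conjugating by a single unitary via Lemma \ref{rhoxy} --- the quotient projection of a linear variety $M_{ix}=\tilde{M}_{ix}\cap\bn$ (when $\rho(x,\overline{M_i}^{_A})<1$), where $\tilde{M}_{ix}=\{v\in T\tilde{M_i}|_{\hat x}:v\perp\hat x\}+\mathbb{C}\hat x$ and $\hat x\in\pbn$ is the image of $x$ under the coordinate functions. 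This produces the case list (1-a),(1-b),(2),(3),(4) recorded above.

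\textbf{The easy cases.} In cases (2),(3),(4) at least one of $Q_{1x},Q_{2x}$ vanishes, and then $Q_{3x}=0$ as well because $M_3\subseteq M_1\cap M_2$, so $Q_{2x}Q_{1x}Q_{2x}-Q_{3x}=0$. In case (1-a) all three of $\rho(x,\overline{M_i}^{_A})$ are $<1$; using Lemma \ref{rhoxy} with a common unitary I may replace $x$ by a point of $\overline{M_3}^{_A}\subseteq\overline{M_1}^{_A}\cap\overline{M_2}^{_A}$, so that each $Q_{ix}$ is the quotient projection of the linear variety $M_{ix}$. From the formula for $\tilde{M}_{ix}$ one gets
$$
\tilde{M}_{1x}\cap\tilde{M}_{2x}=\bigl\{v\in T\tilde{M_1}|_{\hat x}\cap T\tilde{M_2}|_{\hat x}:v\perp\hat x\bigr\}+\mathbb{C}\hat x,
$$
and the clean-intersection hypothesis (iii) gives $T\tilde{M_1}|_{\hat x}\cap T\tilde{M_2}|_{\hat x}=T\tilde{M_3}|_{\hat x}$, whence $\tilde{M}_{1x}\cap\tilde{M}_{2x}=\tilde{M}_{3x}$. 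Example \ref{example hyperplane} now applies verbatim to the linear subspaces $\tilde{M}_{1x},\tilde{M}_{2x}$ and bounds $\|Q_{2x}Q_{1x}Q_{2x}-Q_{3x}\|$ by a constant $a_x<1$ depending only on the angle between $\tilde{M}_{1x}$ and $\tilde{M}_{2x}$. Since $\tilde{M_i}$ is smooth on $\pbn$, the assignment $\hat x\mapsto(T\tilde{M_1}|_{\hat x},T\tilde{M_2}|_{\hat x},T\tilde{M_3}|_{\hat x})$ and hence the subspaces $\tilde{M}_{ix}$ and their angle vary continuously over the compact set $\tilde{M_3}\cap\pbn$, so $a_x\le a_0<1$ uniformly.

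\textbf{The main obstacle: ruling out case (1-b).} The one thing left to do --- and the technical core of the proof --- is to show that case (1-b), in which $\rho(x,\overline{M_1}^{_A})<1$ and $\rho(x,\overline{M_2}^{_A})<1$ but $\rho(x,\overline{M_3}^{_A})=1$, is \emph{impossible} under the hypotheses. I would argue by contradiction. After conjugating (Lemma \ref{rhoxy}) we may take $x\in\overline{M_1}^{_A}$, so $\hat x=:\xi\in\tilde{M_1}\cap\pbn$; any $y\in\overline{M_2}^{_A}$ with $\rho(x,y)<1$ has $\hat y=\xi$ (because $\varphi_{z_\alpha}(w)\to\xi$ whenever $z_\alpha\to\xi\in\pbn$ and $w\in\bn$ is fixed), so by transversality of $\tilde{M_2}$ to $\pbn$ we get $\xi\in\tilde{M_2}\cap\pbn$ and thus $\xi\in\tilde{M_3}\cap\pbn$. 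Now I would work in the local graph coordinates for $\tilde{M_1}$ and $\tilde{M_2}$ over their tangent spaces at $\xi$ --- the same coordinates used in the proof of Lemma \ref{Mx} --- and renormalize along a net $z_\alpha\to x$ by the automorphisms $\varphi_{z_\alpha}$. The point is that, \emph{because} $\tilde{M_1}$, $\tilde{M_2}$, $\tilde{M_3}$ are transverse to $\pbn$ and $\tilde{M_1},\tilde{M_2}$ meet cleanly at $\xi$, the renormalized pictures of $M_1$ and $M_2$ converge to the transverse linear varieties $\tilde{M}_{1x}$, $\tilde{M}_{2x}$, whose angle is bounded below as in the easy case; a quantitative (finite-dimensional) version of Example \ref{example hyperplane}/Remark \ref{inverse of exam 1} then forces that a point within bounded hyperbolic distance of both $M_1$ and $M_2$ is within bounded hyperbolic distance of $M_3$, i.e. $\rho(x,\overline{M_3}^{_A})<1$ --- contradiction. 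Making this renormalization estimate precise is the hard part; what it rules out is exactly the pathology of Example \ref{counter example}, where $\tilde{M_1}\cap\tilde{M_2}$ touches $\clb$ only tangentially on $\pbn$ (violating (ii)) and $\rho(M_1,M_2)=0$.

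\textbf{Conclusion and essential normality.} Combining the three paragraphs, the supremum above is $<1$, so Lemma \ref{3equivalent} gives that $Q_1+Q_2$ is closed and $Q_1\cap Q_2/Q_3$ is finite dimensional. For the consequence, set $Q=Q_1+Q_2=\overline{span}\{K_\lambda:\lambda\in M_1\cup M_2\}$, a quotient module whose orthocomplement is a submodule, and note $\dim(M_1\cup M_2)=d$. Since $Q_1+Q_2$ is closed, $0$ is isolated in the spectrum of $T:=Q_1+Q_2$ (the sum of the two projections), so $Q=\Phi(T)$ for a $\Phi$ holomorphic near $\sigma(T)$, and the Riesz functional calculus gives
$$
[M_{z_j},Q]=\frac{1}{2\pi i}\oint\Phi(z)(zI-T)^{-1}\bigl([M_{z_j},Q_1]+[M_{z_j},Q_2]\bigr)(zI-T)^{-1}\,dz .
$$
Each defect $[M_{z_j},Q_i]$ lies in $\mathcal{S}^{2p}$ for $p>2\dim M_i$ (part of the $p$-essential normality of $Q_i$ from \cite{our paper}), hence for $p>2d$; since the resolvents are bounded on the contour, $[M_{z_j},Q]\in\mathcal{S}^{2p}$ for $p>2d$, which yields that $Q=Q_1+Q_2$ is $p$-essentially normal for $p>2d$. (Equivalently, one checks that $\mu_1+\mu_2$, with $\mu_i$ an equivalent measure for $Q_i$, is an equivalent measure for $Q$ --- the upper estimate being the Carleson property of each $\mu_i$ and the lower estimate following from the positive angle via $\langle Tf,f\rangle\ge c\|f\|^2$ on $Q$ --- and invokes the $\mathcal{S}^p$ estimates of \cite{our paper}, valid since $\mu_1+\mu_2$ is carried by the $d$-dimensional set $M_1\cup M_2$.)
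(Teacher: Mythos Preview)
Your overall architecture is exactly the paper's: reduce via Lemma~\ref{3equivalent} to a uniform bound on $\|Q_{2x}Q_{1x}Q_{2x}-Q_{3x}\|$ over $M_A\backslash\bn$, invoke Lemma~\ref{Mx} to linearize at $\hat x$, run the case split (1-a)/(1-b)/(2)/(3)/(4), dispose of (2)--(4) trivially, handle (1-a) by Example~\ref{example hyperplane} together with continuity of tangent spaces and compactness of $\tilde{M_3}\cap\pbn$, and isolate (1-b) as the technical crux. Your derivation of $p$-essential normality from closedness via the Riesz calculus for $\Phi(Q_1+Q_2)$ is correct and in fact more explicit than the paper, which only cites \cite{Douglas Wang remark}.

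The one substantive divergence is in how (1-b) is excluded. You propose to recenter at $z_\alpha\in M_1$, let $\varphi_{z_\alpha}(M_i)$ converge to the linear $M_{ix}$, and then invoke a ``quantitative finite-dimensional version of Example~\ref{example hyperplane}/Remark~\ref{inverse of exam 1}'' to force bounded hyperbolic distance to $M_3$. Two cautions: first, Example~\ref{example hyperplane} is a statement about Bergman projections, not about hyperbolic distances between varieties, so the lemma you actually need (points hyperbolically close to two nearly-linear varieties with a definite angle are close to their intersection, uniformly in the perturbation) is a separate finite-dimensional estimate you would still have to prove; second, under the contradiction hypothesis $\rho(x,\overline{M_3}^{_A})=1$ the set $\varphi_{z_\alpha}(M_3)$ escapes every compact, so its ``convergence to $M_{3x}$'' is exactly what is in doubt. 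The paper sidesteps both issues with a different geometric trick: it recenters instead at the \emph{nearest} point $\lambda_\alpha\in M_3$ to $z_\alpha$, and proves a first-variation lemma (Lemma~\ref{condition 3 lemma 1}) saying that if $0$ realizes the hyperbolic distance from $z$ to a complex manifold through $0$, then $z\perp TM|_0$. Thus any limit $a\in\pbn$ of $\varphi_{\lambda_\alpha}(z_\alpha)$ satisfies $a\perp M_{3\xi}$. A companion lemma (Lemma~\ref{condition 3 lemma 2}) shows that the limits of $\varphi_{\lambda_\alpha}(z_\alpha)$ and of $\varphi_{\lambda_\alpha}(w_\alpha)$ (with $w_\alpha\in M_2$ nearest to $z_\alpha$) lie in $\overline{M_{1\xi}}$ and $\overline{M_{2\xi}}$; since $\rho(z_\alpha,w_\alpha)$ is bounded these limits coincide, so $a\in M_{1\xi}\cap M_{2\xi}=M_{3\xi}$ by clean intersection---contradicting $|a|=1$ and $a\perp M_{3\xi}$. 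This orthogonality-at-the-minimizer idea is the one ingredient your sketch is missing.
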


As stated above the theorem, the only thing left for us to verify is the following lemma.

\begin{lem}\label{condition 3}
Assume the same conditions as Theorem \ref{mainthm}, then the technical condition (3) holds.
\end{lem}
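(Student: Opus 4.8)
The plan is to strip condition (3) down to a purely local statement at the boundary point $\hat x$ and then to prove that statement by combining the (anisotropic) geometry of hyperbolic balls with the cleanness hypothesis.

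First I would reformulate. Recall from the proof of Lemma~\ref{Mx} that for any $x\in M_A\setminus\bn$ and any net $z_\alpha\to x$ in $\bn$ one has $M_{ix}=\{w\in\bn:\rho(\varphi_{z_\alpha}(w),M_i)\to 0\}$, and that, running through the three cases of Lemma~\ref{Mx}, $\rho(x,\overline{M_i}^{_A})<1$ is equivalent to $Q_{ix}\neq 0$, i.e. to $M_{ix}\neq\emptyset$. Thus the hypotheses give a net $z_\alpha\to x$ and points $v_i\in M_{ix}$ ($i=1,2$); then $\beta(z_\alpha,M_i)\le\beta(z_\alpha,\varphi_{z_\alpha}(v_i))+\beta(\varphi_{z_\alpha}(v_i),M_i)=\beta(0,v_i)+o(1)$, so there is a fixed $R$ with $\beta(z_\alpha,M_i)\le R$ for large $\alpha$; choose $\lambda^{(i)}_\alpha\in M_i$ with $\beta(z_\alpha,\lambda^{(i)}_\alpha)\le R$. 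Conversely, it will suffice to produce $w_\alpha\in M_3$ with $\beta(z_\alpha,w_\alpha)$ bounded: then $\varphi_{z_\alpha}(w_\alpha)$ stays in a fixed ball $\overline{\mathbb{B}(0,s_{R'})}$, a subnet converges to some $b\in\bn$, and $\rho(\varphi_{z_\alpha}(b),M_3)\le\rho(b,\varphi_{z_\alpha}(w_\alpha))\to 0$ forces $b\in M_{3x}$, whence $Q_{3x}\neq 0$ and $\rho(x,\overline{M_3}^{_A})<1$. To locate $\hat x$: since the coordinate functions lie in $A$ and $\bn$ is open in $M_A$, $z_\alpha\to\hat x=:\zeta\in\pbn$ with $|z_\alpha|\to 1$; and since $D(z_\alpha,R)$ has Euclidean diameter $O\big((1-|z_\alpha|^2)^{1/2}\big)\to 0$ by Lemma~\ref{description of D(z,R)}, the witnesses satisfy $\lambda^{(i)}_\alpha\to\zeta$, so $\zeta\in\tilde M_1\cap\tilde M_2\cap\pbn=\tilde M_3\cap\pbn$ — by (i)--(iii) a clean intersection point where $\tilde M_1,\tilde M_2,\tilde M_3$ are all smooth and transverse to $\pbn$.

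The heart of the argument is then the local estimate near $\zeta$. I would fix holomorphic defining functions with $\tilde M_1=\{f=0\}$, $\tilde M_2=\{g=0\}$, so that $\tilde M_3=\{f=0,\ g=0\}$; cleanness gives $\operatorname{rank}d(f,g)|_\zeta=\operatorname{codim}\tilde M_3$, so selecting $\operatorname{codim}\tilde M_3$ components of $(f,g)$ with independent differentials at $\zeta$ cuts out $\tilde M_3$ regularly near $\zeta$ and yields $\operatorname{dist}(p,\tilde M_3)\le C_0\,|(f(p),g(p))|$ there. From $\beta(z_\alpha,\lambda^{(i)}_\alpha)\le R$ and Lemma~\ref{description of D(z,R)}, $\lambda^{(i)}_\alpha-z_\alpha$ has $\mathbb{C}z_\alpha$-component $O(1-|z_\alpha|^2)$ and $Q_{z_\alpha}$-component $O\big((1-|z_\alpha|^2)^{1/2}\big)$; since $f(\lambda^{(1)}_\alpha)=0$, $g(\lambda^{(2)}_\alpha)=0$ we get
$$|g(\lambda^{(1)}_\alpha)|=|g(\lambda^{(1)}_\alpha)-g(\lambda^{(2)}_\alpha)|\lesssim|\lambda^{(1)}_\alpha-\lambda^{(2)}_\alpha|\lesssim(1-|z_\alpha|^2)^{1/2},$$
hence $\operatorname{dist}(z_\alpha,\tilde M_3)\lesssim(1-|z_\alpha|^2)^{1/2}$; pick $p_\alpha\in\tilde M_3$ realizing this, so $p_\alpha\to\zeta$. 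Finally I would invoke transversality of $\tilde M_3$ with $\pbn$, which forces $T_\zeta\tilde M_3\not\subseteq\zeta^{\perp}$: thus $\langle\,\cdot\,,\zeta\rangle$ restricts to a nondegenerate complex functional on $T\tilde M_3$ near $\zeta$, and the implicit function theorem lets me slide $p_\alpha$ inside $\tilde M_3$ a distance $O\big((1-|z_\alpha|^2)^{1/2}\big)$ to a point $w_\alpha\in\tilde M_3$ with $\langle w_\alpha,z_\alpha\rangle=|z_\alpha|^2-K(1-|z_\alpha|^2)$ for a fixed constant $K$ (depending only on $R$ and the geometry at $\zeta$) chosen large enough that $|w_\alpha|<1$. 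Then $w_\alpha\in M_3$, the $\mathbb{C}z_\alpha$-component of $w_\alpha-z_\alpha$ is $O(1-|z_\alpha|^2)$ and its $Q_{z_\alpha}$-component is $O\big((1-|z_\alpha|^2)^{1/2}\big)$, so computing $1-|\varphi_{z_\alpha}(w_\alpha)|^2$ directly via Lemma~\ref{basic about varphi} gives $\beta(z_\alpha,w_\alpha)\le R'<\infty$, as required.

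The main obstacle is precisely this last step: transferring "hyperbolically near $M_1$ and near $M_2$" to "hyperbolically near $M_3$" while respecting the strong anisotropy of $D(z,R)$ near $\pbn$ (radius $\sim 1-|z|^2$ radially versus $\sim(1-|z|^2)^{1/2}$ transversally). Cleanness is exactly what keeps $\operatorname{dist}(z_\alpha,\tilde M_3)$ of the same order as $|\lambda^{(1)}_\alpha-\lambda^{(2)}_\alpha|$ — this fails for tangential intersections, cf.\ Example~\ref{counter example} — and transversality of $\tilde M_3$ with $\pbn$ is what lets us push $p_\alpha$ back into $\bn$ at comparable depth without destroying the Euclidean bound, by the same device used in the proof of Theorem~\ref{one variety}. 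The book-keeping of the various geometric constants (so that $R'$ comes out finite, and $w_\alpha$ genuinely lands in $\bn$) is where most of the care will be needed.
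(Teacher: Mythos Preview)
Your argument is correct, but it follows a genuinely different route from the paper's own proof. The paper argues by contradiction and works entirely inside the hyperbolic picture: assuming $\rho(x,\overline{M_3}^{_A})=1$, it takes $z_\alpha\in M_1$ with $z_\alpha\to x$, nearest points $w_\alpha\in M_2$, $\lambda_\alpha\in M_3$, and then passes to the chart $\varphi_{\lambda_\alpha}$ centred at $\lambda_\alpha$. Two short auxiliary lemmas do the work: one (a first–derivative computation of $|\varphi_z(w)|^2$) shows that the minimiser $\varphi_{\lambda_\alpha}(z_\alpha)$ is orthogonal to $T(\varphi_{\lambda_\alpha}(M_3))|_0$, so any accumulation point $a\in\pbn$ is orthogonal to $M_{3\xi}$; the other shows that $\varphi_{\lambda_\alpha}(z_\alpha)$ and $\varphi_{\lambda_\alpha}(w_\alpha)$ accumulate in $\overline{M_{1\xi}}$ and $\overline{M_{2\xi}}$ respectively, and since $\rho(z_\alpha,w_\alpha)$ stays below $1$ they have the same limit $a$. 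Clean intersection then forces $a\in\overline{M_{3\xi}}$, contradicting $a\perp M_{3\xi}$ with $|a|=1$.

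By contrast, you work constructively in Euclidean coordinates: you use defining functions and cleanness to get $\operatorname{dist}(z_\alpha,\tilde M_3)=O((1-|z_\alpha|^2)^{1/2})$, and then use transversality of $\tilde M_3$ with $\pbn$ to slide inside $\tilde M_3$ and fix up the radial component so that the nearby point lands in $\bn$ with bounded $\beta$-distance. Your approach has the virtue of producing an explicit witness $w_\alpha\in M_3$ and of making very transparent exactly where each hypothesis is used (cleanness for the $\operatorname{dist}\lesssim|(f,g)|$ estimate, transversality for the sliding), at the cost of more bookkeeping with the anisotropic ball and the choice of the constant $K$. The paper's approach is shorter once the two auxiliary lemmas are in hand and stays closer to the $M_A$-machinery already set up; it also avoids any need to track Euclidean constants, since the contradiction is purely qualitative. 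Both proofs ultimately encode the same geometric obstruction, namely that under clean intersection the ``linearised'' varieties $M_{1x},M_{2x}$ cannot come close without $M_{3x}$ being present.
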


We break the proof into several lemmas.

\begin{lem}\label{condition 3 lemma 1}
Suppose $z=(z_1,0,\cdots,0)\in\bn$, then
$$
\frac{\partial}{\partial w_1}|\varphi_z(w)|^2(0)=\bar{z_1}(|z_1|^2-1)
$$
and
$$
\frac{\partial}{\partial w_i}|\varphi_z(w)|^2(0)=0,~~~i=2,3,\cdots,n.
$$
As a consequence, if $M$ is any complex manifold passing through $0$ and obtains its minimal hyperbolic distant to $z$ at the point $0$, then $z$ must be orthogonal to the tangent space $TM|_0$.
\end{lem}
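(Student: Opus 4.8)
The plan is to reduce both assertions to the closed form for $1-|\varphi_a(z)|^2$ recorded in Lemma \ref{basic about varphi}(2). Specializing that identity to $a=z=(z_1,0,\ldots,0)$ with $w$ as the running variable, and using $\langle w,z\rangle=w_1\bar z_1$ and $|w|^2=\sum_{i=1}^n|w_i|^2$, one gets
$$
|\varphi_z(w)|^2=1-\frac{(1-|z_1|^2)(1-|w|^2)}{|1-w_1\bar z_1|^2}.
$$
Now I would simply apply the quotient rule to compute the Wirtinger derivatives $\partial/\partial w_j$ at $w=0$. At $w=0$ the denominator equals $1$, $\partial_{w_j}(1-|w|^2)\big|_0=-\bar w_j\big|_0=0$, and $\partial_{w_1}|1-w_1\bar z_1|^2\big|_0=-\bar z_1$ while $\partial_{w_j}|1-w_1\bar z_1|^2=0$ for $j\geq 2$. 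Feeding these in, the only surviving term is the one coming from differentiating the denominator in the $j=1$ slot, and it produces $\partial_{w_1}|\varphi_z(w)|^2(0)=-(1-|z_1|^2)\bar z_1=\bar z_1(|z_1|^2-1)$, with $\partial_{w_j}|\varphi_z(w)|^2(0)=0$ for $j\geq 2$. This part is entirely routine.

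For the geometric consequence, recall that $\beta(z,w)$ is a strictly increasing function of $\rho(z,w)=|\varphi_z(w)|$, hence of $|\varphi_z(w)|^2$; so $0$ minimizing $w\mapsto\beta(z,w)$ over $M$ is the same as $0$ being a critical point of the smooth real-valued function $h(w):=|\varphi_z(w)|^2$ restricted to $M$. Parametrizing $M$ near $0$ by a curve $\gamma$ with $\gamma(0)=0$ and $\dot\gamma(0)=v\in TM|_0$, the first-variation condition and the fact that $h$ is real (so $\partial_{\bar w_j}h=\overline{\partial_{w_j}h}$) give $\frac{d}{dt}\big|_{0}h(\gamma(t))=2\,\mathrm{Re}\big(\sum_j\partial_{w_j}h(0)\,v_j\big)=0$. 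Since $M$ is a \emph{complex} manifold, $TM|_0$ is a complex subspace, so $iv\in TM|_0$ as well; applying the same identity to $iv$ annihilates the imaginary part, and together these yield $\sum_j\partial_{w_j}h(0)\,v_j=0$ for every $v\in TM|_0$. By the formulas just established this reads $\bar z_1(|z_1|^2-1)\,v_1=0$; since $|z_1|<1$ the coefficient is nonzero (the case $z=0$ being trivial), forcing $v_1=0$. As $z=(z_1,0,\ldots,0)$, the condition $v_1=0$ is exactly $\langle v,z\rangle=0$, i.e. $z\perp TM|_0$.

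I do not expect a genuine obstacle here: the content is recognizing that one should invoke Lemma \ref{basic about varphi}(2) in the normalized coordinates $z=(z_1,0,\ldots,0)$, and then using complex-linearity of the holomorphic tangent space to upgrade "critical point of a real function" into the vanishing of the full holomorphic gradient of $|\varphi_z(\cdot)|^2$ along $TM|_0$. The only place demanding a bit of care is keeping the Wirtinger-derivative bookkeeping straight and noting that $|\varphi_z(\cdot)|^2$ being real makes $\partial_{\bar w_j}$ conjugate to $\partial_{w_j}$, which is precisely what lets the two real first-variation identities (along $v$ and along $iv$) be combined into the single complex-linear condition used above.
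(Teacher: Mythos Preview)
Your argument is correct and follows essentially the same line as the paper's: compute the Wirtinger derivatives of $|\varphi_z(\cdot)|^2$ at $0$ (the paper just says ``direct computation'' where you invoke Lemma~\ref{basic about varphi}(2)), then use the first-variation condition at a minimum to force $\langle v,z\rangle=0$ for $v\in TM|_0$. Your treatment of the geometric consequence is in fact more careful than the paper's, which writes the directional derivative as $\sum_i u_i\,\partial_{w_i}$ without separating out the $\partial_{\bar w_i}$ terms; your use of both $v$ and $iv$ to pass from the vanishing of the real first variation to the complex-linear condition $\bar z_1(|z_1|^2-1)v_1=0$ is the right way to make that step precise.
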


\begin{proof}
The two formulas are obtained by direct computation. To prove the last statement, one only need to observe that the derivative of $|\varphi_z(w)|^2$ in $u$ direction is
$$
\frac{\partial|\varphi_z(w)|^2}{\partial u}(0)=\sum_{i=1}^nu_i\frac{\partial|\varphi_z(w)|^2}{\partial w_i}(0)=\langle u,z\rangle(|z_1|^2-1).
$$
Since the minimal value of $|\varphi_z(w)|^2$ is obtained at $0$, the derivative of $|\varphi_z(w)|^2$ along all directions in $TM|_0$ must be $0$. Therefore $z$ is orthogonal to $TM|_0$. This completes the proof.
\end{proof}

\begin{lem}\label{condition 3 lemma 2}
Suppose $M$ satisfies the hypotheses of Theorem \ref{one variety}, and suppose $\{z_{\alpha}\}, \{w_{\alpha}\}\subseteq M$ are two separated nets such that , viewed as points in $M_A$, $z_{\alpha}$ tends to a point $x\in M_A\backslash\bn$, viewed as points in $\clb$, $w_{\alpha}$ tends to $\hat{x}$. Then any limit point of the net $\{\varphi_{z_{\alpha}}(w_{\alpha})\}$ is in $\overline{M_x}\subseteq\clb$.
\end{lem}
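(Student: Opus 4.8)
The plan is to fix an arbitrary limit point $\eta\in\clb$ of the net $\{\varphi_{z_\alpha}(w_\alpha)\}$, pass to a subnet along which $\eta_\alpha:=\varphi_{z_\alpha}(w_\alpha)\to\eta$, and show that $\eta$ lies in the linear subspace $\tilde M_x$ of $\cn$ produced in Lemma \ref{Mx}; since $\tilde M_x$ is a linear subspace meeting $\bn$, its closure in $\clb$ is $\tilde M_x\cap\clb=\overline{M_x}$, so this is exactly what is needed. I will use two set-up facts. First, each coordinate function $z_i$ is bounded and hyperbolically uniformly continuous, hence belongs to $A$; therefore $z_\alpha\to x$ in $M_A$ forces $z_\alpha\to\hat x$ in $\clb$, and, as $x\notin\bn$, also $\hat x\in\pbn$. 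Combined with the hypothesis $w_\alpha\to\hat x$ in $\clb$ and with conditions (1)--(2) of Theorem \ref{one variety} (which hold here), for $\alpha$ large both $z_\alpha$ and $w_\alpha$ lie in a fixed neighborhood of $\hat x$ on which $\tilde M$ is a smooth graph. Second, passing to a further subnet, the coordinate frames adapted to $z_\alpha$ — in which $z_\alpha=(z_{\alpha,1},0,\dots,0)$ with $|z_{\alpha,1}|=|z_\alpha|\to1$ and $\tilde M$ is the graph $w=(w',F_\alpha(w'))$ over $w'=(w_1,\dots,w_d)$, normalized so $F_\alpha(z_\alpha')=0$ — converge to the frame adapted to $\hat x$, and the graph data $F_\alpha$ converge (in $C^1$) to the graph function of $\tilde M$ at $\hat x$. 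By the formula for $\tilde M_x$ in Lemma \ref{Mx}, $\eta\in\tilde M_x$ is then equivalent to the linear relations $\eta_i=\sum_{j=2}^{d}A_{ij}\eta_j$ for $i=d+1,\dots,n$, where $(A_{ij})$ is the derivative of the limiting graph function at $\hat x$ and $\eta_1,\dots,\eta_d$ are unconstrained.

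The computation is then as follows. In the frame adapted to $z_\alpha$, Lemma \ref{Mx} gives $\eta_{\alpha,1}=\frac{z_{\alpha,1}-w_{\alpha,1}}{1-w_{\alpha,1}\overline{z_{\alpha,1}}}$, $\eta_{\alpha,j}=-\frac{(1-|z_{\alpha,1}|^2)^{1/2}}{1-w_{\alpha,1}\overline{z_{\alpha,1}}}\,w_{\alpha,j}$ for $2\le j\le d$, and $\eta_{\alpha,i}=-\frac{(1-|z_{\alpha,1}|^2)^{1/2}}{1-w_{\alpha,1}\overline{z_{\alpha,1}}}\,F_{\alpha,i}(w_\alpha')$ for $i\ge d+1$. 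Taylor-expanding $F_{\alpha,i}$ about $z_\alpha'$ with uniformly bounded second-order remainder, $F_{\alpha,i}(w_\alpha')=A_{\alpha,i,1}(w_{\alpha,1}-z_{\alpha,1})+\sum_{j=2}^{d}A_{\alpha,i,j}w_{\alpha,j}+R_{\alpha,i}$ with $|R_{\alpha,i}|\le C|w_\alpha'-z_\alpha'|^2$. Substituting, the terms $-\frac{(1-|z_{\alpha,1}|^2)^{1/2}}{1-w_{\alpha,1}\overline{z_{\alpha,1}}}w_{\alpha,j}$ are precisely $\eta_{\alpha,j}$, and, since $w_{\alpha,1}-z_{\alpha,1}=-\eta_{\alpha,1}(1-w_{\alpha,1}\overline{z_{\alpha,1}})$, the $(w_{\alpha,1}-z_{\alpha,1})$-term equals $(1-|z_{\alpha,1}|^2)^{1/2}A_{\alpha,i,1}\eta_{\alpha,1}$. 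Hence $\eta_{\alpha,i}=(1-|z_{\alpha,1}|^2)^{1/2}A_{\alpha,i,1}\eta_{\alpha,1}+\sum_{j=2}^{d}A_{\alpha,i,j}\eta_{\alpha,j}+E_{\alpha,i}$ with $|E_{\alpha,i}|\le C\frac{(1-|z_{\alpha,1}|^2)^{1/2}}{|1-w_{\alpha,1}\overline{z_{\alpha,1}}|}|w_\alpha'-z_\alpha'|^2$. The first term tends to $0$ since $|z_{\alpha,1}|\to1$. For $E_{\alpha,i}$, split $|w_\alpha'-z_\alpha'|^2=|w_{\alpha,1}-z_{\alpha,1}|^2+\sum_{j=2}^d|w_{\alpha,j}|^2$ and use $\frac{|w_{\alpha,1}-z_{\alpha,1}|}{|1-w_{\alpha,1}\overline{z_{\alpha,1}}|}=|\eta_{\alpha,1}|<1$ and $\frac{(1-|z_{\alpha,1}|^2)^{1/2}|w_{\alpha,j}|}{|1-w_{\alpha,1}\overline{z_{\alpha,1}}|}=|\eta_{\alpha,j}|<1$ to get $|E_{\alpha,i}|\le C\big((1-|z_{\alpha,1}|^2)^{1/2}|w_{\alpha,1}-z_{\alpha,1}|+\sum_{j=2}^d|w_{\alpha,j}|\big)\to0$, using $z_\alpha,w_\alpha\to\hat x$. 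Letting $\alpha$ run — the frames and all scalars converge — we obtain $\eta_i=\sum_{j=2}^{d}A_{ij}\eta_j$ for every $i\ge d+1$, i.e. $\eta\in\tilde M_x\subseteq\overline{M_x}$, completing the argument.

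The delicate point — the reason this is not an immediate corollary of Lemma \ref{Mx} — is that one cannot assume $\rho(z_\alpha,w_\alpha)$ stays bounded away from $1$: in the boundary-accumulation regime $|\eta|=1$, which was avoided in Lemma \ref{Mx} by arranging $|\varphi_{z_\alpha}(\lambda_\alpha)|<|w|+\epsilon<1$, the estimate $|w_\alpha-z_\alpha|^2=O(1-|z_\alpha|^2)$ is unavailable and the denominator $|1-w_{\alpha,1}\overline{z_{\alpha,1}}|$ above may be comparably small. The device that makes the whole estimate uniform is exactly the one used for $E_{\alpha,i}$: every quotient of the shape $(\text{coordinate difference})/(1-w_{\alpha,1}\overline{z_{\alpha,1}})$, possibly carrying a factor $(1-|z_{\alpha,1}|^2)^{1/2}$, is literally a coordinate of $\eta_\alpha=\varphi_{z_\alpha}(w_\alpha)\in\bn$ and therefore automatically has modulus $<1$, so the dangerous denominator is absorbed before any limit is taken, with no size comparison between $1-|z_\alpha|^2$ and $1-|\eta_\alpha|^2$ required. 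The only remaining bookkeeping is the moving coordinate frame and the convergence of the coefficients $A_{\alpha,i,j}$, which is handled by the continuous dependence of the local graph representation on the base point established in \cite{our paper}, together with hypotheses (1)--(2) of Theorem \ref{one variety}.
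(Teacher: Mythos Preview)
Your argument is correct and follows essentially the same route as the paper: adapted local graph coordinates with $z_\alpha=(z_{\alpha,1},0,\dots,0)$, the explicit formula for the coordinates of $\varphi_{z_\alpha}(w_\alpha)$, a first-order Taylor expansion of $F_\alpha$, and the observation that the nonlinear remainder goes to zero while the linear part produces exactly the relations defining $\tilde M_x$. The one spot where the paper is a bit more economical is the remainder estimate: instead of splitting $|w_\alpha'-z_\alpha'|^2$ and absorbing each piece into $|\eta_{\alpha,j}|<1$, the paper uses the single elementary inequality
\[
|w-z|^2=|w|^2+|z|^2-2\operatorname{Re}\langle w,z\rangle\le 2\bigl(1-\operatorname{Re}\langle w,z\rangle\bigr)\le 2\,|1-\langle w,z\rangle|,
\]
valid for all $w,z\in\bn$, which immediately gives $|E_{\alpha,i}|\le C(1-|z_\alpha|^2)^{1/2}\to 0$ without any further splitting or appeal to $w_{\alpha,j}\to 0$.
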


\begin{proof}
For convenience we omit the subscript $\alpha$. Using the same convention as before, we take the basis at each $z$, so
$$
z=(z_1,0,\cdots,0),~~~w=(w',F(w')),
$$
where $w'=(w_1,\cdots,w_d)$ and $F=(f_{d+1},\cdots,F_n)$ is the expression of $\tilde{M}$ depending continuously on $z$. Same as in the proof of Lemma \ref{Mx}, we have
$$
\varphi_z(w)=(\eta_1,\cdots,\eta_n),
$$
where
$$
\eta_1=\frac{z_1-w_1}{1-\langle w,z\rangle},~~~\eta_i=-\frac{(1-|z|^2)^{1/2}w_i}{1-\langle w,z\rangle},i=2,\cdots,d
$$
and
$$
\eta_j=-\frac{(1-|z|^2)^{1/2}F_j(w')}{1-\langle w,z\rangle},j=d+1,\cdots,n.
$$
We write $F_j(w')=L_j(w')+O(|w-z|^2)$, where $L$ is the linear part of $F$.
Since
$$
|w-z|^2=|w|^2+|z|^2-2Re\langle w,z\rangle\leq2(1-Re\langle w,z\rangle)\leq2|1-\langle w,z\rangle|,
$$
for $j=d+1,\cdots,n$,
$$
\eta_j+\frac{(1-|z|^2)^{1/2}L_j(w')}{1-\langle w,z\rangle}=\frac{(1-|z|^2)^{1/2}}{1-\langle z,w\rangle}O(|1-\langle w,z\rangle|)\to0,~~z\to\hat{x}.
$$
The rest of the proof is as in Lemma \ref{Mx}(3).
\end{proof}

\begin{proof}[\textbf{Proof of Lemma \ref{condition 3}}]
Suppose $x\in M_A\backslash\bn$ and $\rho(x,\overline{M_1}^{_A})<1$, $\rho(x,\overline{M_2}^{_A})<1$, we will show that $\rho(x,\overline{M_3}^{_A})<1$. Clearly, $\hat{x}\in\tilde{M}_1\cap\tilde{M}_2=\tilde{M}_3$. By Lemma \ref{Mx}, without loss of generality, we assume $x\in\overline{M_1}^{_A}$.

Let $\{z_{\alpha}\}\subseteq M_1$ such that $z_{\alpha}\to x$. Let $w_{\alpha}\in M_2$ and $\lambda_{\alpha}\in M_3$ such that $\rho(z_{\alpha},w_{\alpha})=\rho(z_{\alpha},M_2)$ and $\rho(z_{\alpha},\lambda_{\alpha})=\rho(z_{\alpha},M_3)$. Take subnets (using the same notation) such that both nets converge in $M_A$. Suppose $w_{\alpha}\to y\in M_A$ and $\lambda_{\alpha}\to \xi\in M_A$. Clearly $\hat{y}=\hat{\xi}=\hat{x}$. For convenience we omit the subscript $\alpha$ in the sequel.

Since $\rho(\varphi_{\lambda}(z),0)=\rho(\varphi_{\lambda}(z),\varphi_{\lambda}(M_3))$, by Lemma \ref{condition 3 lemma 1}, $\varphi_{\lambda}(z)\perp\varphi_{\lambda}(M_3)$. The latter tends uniformly to $M_{3\xi}$ while the first has a subnet that converges to some point $a$ in $\pbn$ by compactness. Therefore $a\perp M_{3\xi}$.

On the other hand, $\rho(\varphi_{\lambda}(z),\varphi_{\lambda}(w))=\rho(z,w)\to\rho(x,\overline{M_2}^{_A})<1$. Since $|\varphi_{\lambda}(z)|=\rho(\lambda,z)\to1$, we have the Euclidean distance $|\varphi_{\lambda}(z)-\varphi_{\lambda}(w)|\to0$. Therefore $\varphi_{\lambda}(w)\to a$. By Lemma \ref{condition 3 lemma 2}, $a\in M_{1\xi}\cap M_{2\xi}$ which equals $M_{3\xi}$ by the clean intersection condition and the experession of $M_{i\xi}$ in Lemma \ref{Mx}. So $a$ is a vector of length $1$ which both belong to $M_{3\xi}$ and is perpendicular to $M_{3\xi}$. A contradiction. Therefore such $x$ does not exist. This completes the proof.

\end{proof}

\section{Summary}
A classical way of proving Geometric Arveson-Douglas Conjecture is by ``decomposing the variety'' (cf. \cite{Sha Ken}). In this paper, we introduce ideas in complex harmonic analysis to solve this problem. This approach has the advantage of ``localizing'' the problem, which allows us to reduce the problem to simpler cases. Su\'{a}rez's results (\cite{Suarez04}\cite{Suarez07}) play an important role here.

The ideas in our last paper \cite{our paper} and this paper should be considered as two continuous steps towards analysing the varieties. First, we approximate the variety at points close to $\pbn$, using simpler varieties (in our case, their linearizations at the points). Then we use results on these simpler varieties to obtain essential normality results of the original variety. After the first step, we are able to ``localize'' the variety at points in $M_A\backslash \bn$. We then obtain results on relation between the angle of two quotient modules and the relative positions of ``localizations'' of the corresponding varieties. Finally, we give sufficient conditions for the sum of two quotient modules to be closed, i.e., the angle to be positive. This gives us results on unions of varieties. In the future, when we proved more results in the first step, we can use similar techniques in this paper to generate more complicated examples.

Another consequence of our result is an index result. Given $Q_i$ as in Theorem \ref{mainthm}, consider the exact sequence
$$
0\to Q_1\cap Q_2\to Q_1\oplus Q_2\to Q_1+Q_2\to0.
$$
Here we define the first map to be the embedding and second map to be the difference of two entries. In general, given such a short exact sequence and given that the sum on the right side is closed, then the essential normality of the two modules imply the essential normality of both their sum and their intersection (cf. \cite{Douglas Wang remark}).

 Also, By BDF theory, the essentially normal quotient modules $Q_i$, $i=1,2,3$ and $Q=Q_1+Q_2$ define index classes, or elements in $K_1(\tilde{M_i}\cap\pbn)$ and $K_1((\tilde{M_1}\cup\tilde{M_2})\cap\pbn)$, respectively. Since $Q_1\cap Q_2/Q_3$ is finite dimensional, the index class $[Q_1\cap Q_2]=[Q_3]$. Therefore we have an equation of index classes
 $$[Q_3]+[Q]=[Q_1]+[Q_2].$$
  In particular, if we assume further that the varieties $M_i$, $i=1,2,3$ satisfy the assumptions of \cite{DYT}, then the index results in \cite{DYT} apply to $[Q_i]$ and we get a formula for $[Q]$ from the above equation.

Ronald G.~Douglas

Texas A\&M University, College Station, TX, 77843

E-mail address: rdouglas@math.tamu.edu\\

Yi Wang

Texas A\&M University, College Station, TX, 77843

E-mail address: yiwangfdu@gmail.com
\end{document}